\theoremstyle{plain}
\newtheorem{theorem}{Theorem}[section]
\newtheorem{lemma}[theorem]{Lemma}
\newtheorem*{observation*}{Observation}
\newtheorem{proposition}[theorem]{Proposition}
\newtheorem*{claim*}{Claim}
\newtheorem*{subclaim*}{Subclaim}
\theoremstyle{definition}
\newtheorem{definition}[theorem]{Definition}
\newtheorem{question}[theorem]{Question}
\newcommand{\betrag}[1]{\vert{#1}\vert}
\newcommand{\lub}{{\rm{lub}}}
\newcommand{\cof}[1]{{{\rm{cof}}(#1)}}
\newcommand{\otp}[1]{{{\rm{otp}}\left(#1\right)}}
\newcommand{\POT}[1]{{\mathcal{P}}({#1})}
\newcommand{\map}[3]{{#1}:{#2}\longrightarrow{#3}}
\newcommand{\Set}[2]{\{{#1}~\vert~{#2}\}}
\newcommand{\seq}[2]{\langle{#1}~\vert~{#2}\rangle}
\newcommand{\anf}[1]{{\text{``}\hspace{0.3ex}{#1}\hspace{0.3ex}\text{''}}}
\newcommand{\HH}[1]{{\rm{H}}(#1)}
\newcommand{\Ult}[2]{{\mathrm{Ult}}({#1},{#2})}
\newcommand{\Col}[2]{{\rm{Col}}({#1},{#2})}
\newcommand{\Lim}{{\rm{Lim}}}
\newcommand{\LL}{{\rm{L}}}
\newcommand{\ZFC}{{\rm{ZFC}}}
\newcommand{\PPP}{{\mathbb{P}}}
\newcommand{\RRR}{{\mathbb{R}}}
\newcommand{\VV}{{\rm{V}}}
\newcommand{\WW}{{\rm{W}}}
 \newcommand{\GCH}{{\rm{GCH}}}
  \newcommand{\SCH}{{\rm{SCH}}}
\theoremstyle{remark}
\newtheorem{case}{Case}
\title{Closure properties of measurable ultrapowers}
\author{Philipp L\"ucke}
\address{Institut de Matem\`{a}tica, Universitat de Barcelona. 
Gran via de les Corts Catalanes 585,
08007 Barcelona, Spain.}
\author{Sandra M\"uller}
\address{Institut f\"ur Mathematik, Universit\"at Wien. Kolingasse 14-16, 1090 Wien, Austria.}
\subjclass[2020]{03E55, 03E05, 03E35, 03E45}
\keywords{Measurable Cardinals, Ultrapowers, Fresh Subsets, Square Sequences, Canonical Inner Models}
\thanks{The authors would like to thank Peter Koepke for a discussion that motivated the work presented in this paper. 
   This project has received funding from the European Union’s Horizon 2020 research and innovation programme under the Marie Sk{\l}odowska-Curie grant agreement No 842082 of the first author (Project \emph{SAIFIA: Strong Axioms of Infinity -- Frameworks, Interactions and Applications}). 
   During the preparation of this paper, the first author was partially supported by the Deutsche Forschungsgemeinschaft (DFG, German Research Foundation) under Germany's Excellence Strategy -- EXC-2047/1 -- 390685813. 
   The second author gratefully acknowledges funding from L'OR\'{E}AL Austria, in collaboration with the Austrian UNESCO Commission and in cooperation with the Austrian Academy of Sciences - Fellowship \emph{Determinacy and Large Cardinals}. 
 }
\begin{document}

\begin{abstract}
 We study closure properties of measurable ultrapowers with respect to Hamkin's notion of \emph{freshness} and show that the extent of these properties highly depends on the combinatorial properties of the underlying model of set theory. 
 In one direction, a result of Sakai shows that, by collapsing a strongly compact cardinal to become the double successor of a measurable cardinal, it is possible to obtain a model of set theory in which such ultrapowers possess the strongest possible closure properties. 
 In the other direction, we use various square principles to show that measurable ultrapowers of canonical inner models only possess the minimal amount of closure properties. 
  In addition, the techniques developed in the proofs of these results also allow us to derive statements about the consistency strength of the existence of measurable ultrapowers  with non-minimal closure properties.  
\end{abstract}

\maketitle



\section{Introduction}

 The present paper studies the structural properties of ultrapowers of models of set theory constructed with the help of normal ultrafilters on measurable cardinals. 
 Two of the most fundamental properties of these ultrapowers are that these models do not contain the ultrafilter utilized in their construction and that they are closed under sequences of length equal to the relevant measurable cardinal. 
  In the following, we want to further analyze the closure and non-closure properties of measurable  ultrapowers through the following notion introduced by Hamkins in \cite{zbMATH01158349}.

\begin{definition}[Hamkins]
 Given a class $M$, a set $A$ of ordinals is \emph{fresh over $M$} if $A\notin M$ and $A\cap\alpha\in M$ for all $\alpha<\lub(A)$.\footnote{Here $\lub(A)$ denotes the \emph{least upper bound} of $A$.}
\end{definition}

Given a normal ultrafilter $U$ on a measurable cardinal, we let $\Ult{\VV}{U}$ denote the (transitive collapse of the) induced ultrapower and we let $\map{j_U}{\VV}{\Ult{\VV}{U}}$ denote the corresponding elementary embedding. For notational simplicity, we confuse $\Ult{\VV}{U}$ and its elements with their transitive collapses.
 In this paper, for a given normal ultrafilter $U$, we aim to determine the class of limit ordinals containing an unbounded subset that is fresh over the ultrapower $\Ult{\VV}{U}$. 
  For the images of regular cardinals under the embedding $j_U$, this question was already studied by Shani in \cite{MR3523658}. Moreover, Sakai investigated closure properties of measurable ultrapowers that imply the non-existence of unbounded fresh subsets at many ordinals in \cite{sakainote}. 

 
 The following proposition lists the obvious closure properties of measurable ultrapowers with respect to the non-existence of fresh subsets. 
  Note that the second part of the second statement also follows directly from {\cite[Corollary 3.3]{sakainote}}. 
   The proof of this proposition and the next one will be given in Section \ref{section:2}.

\begin{proposition}\label{proposition:MinNonFresh}
 Let $U$ be a normal ultrafilter on a measurable cardinal $\delta$ and let $\lambda$ be a limit ordinal. 
 \begin{enumerate}
  \item If the cardinal $\cof{\lambda}$ is either smaller than $\delta^+$ or weakly compact, then no unbounded subset of $\lambda$ is fresh over $\Ult{\VV}{U}$. 
  
  \item If $\cof{\lambda}>2^\delta$ and there exists a ${<}(2^\delta)^+$-closed ultrafilter on $\cof{\lambda}$ that contains all cobounded subsets of $\cof{\lambda}$, then no unbounded subset of $\lambda$ is fresh over $\Ult{\VV}{U}$. In particular, if there exists a strongly compact cardinal $\kappa$ with $\delta<\kappa\leq\cof{\lambda}$, then no unbounded subset of $\lambda$ is fresh over $\Ult{\VV}{U}$. 
 \end{enumerate}
\end{proposition}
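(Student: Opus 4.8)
\emph{The plan.} Write $j=j_U\colon\VV\to M$ with $M=\Ult{\VV}{U}$, and recall that $\crit{j}=\delta$ and that $M$ is closed under $\delta$-sequences, i.e. ${}^{\delta}M\subseteq M$. Since an unbounded $A\subseteq\lambda$ has $\lub(A)=\lambda$, and since \anf{$A\cap\alpha\in M$ for all $\alpha<\lub(A)$} is equivalent to \anf{$A\cap\lambda_i\in M$ for all $i$} along any fixed continuous increasing sequence $\seq{\lambda_i}{i<\cof{\lambda}}$ cofinal in $\lambda$, I would first reduce both statements to the following reconstruction task: if $A\subseteq\lambda$ is unbounded and $A\cap\lambda_i\in M$ for all $i<\cof{\lambda}$, then $A\in M$ (so that $A$ is not fresh). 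Fix such a sequence with each $\lambda_i\geq\delta$, put $\mu=\cof{\lambda}$ and $X_i=A\cap\lambda_i\in M$, and note that the $X_i$ cohere: $X_i=X_{i'}\cap\lambda_i$ whenever $i<i'$.

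If $\mu<\delta^+$, i.e. $\mu\leq\delta$, then $\seq{X_i}{i<\mu}$ is a sequence of length ${\leq}\,\delta$ of elements of $M$, hence lies in $M$ by closure, and $A=\bigcup_{i<\mu}X_i\in M$. For the remaining cases I would set up a uniform \emph{synchronization} framework. Using $M=\Ult{\VV}{U}$, choose for each $i<\mu$ a function $f_i\colon\delta\to\VV$ with $[f_i]_U=X_i$, and for each $\lambda_i$ a function $h_i$ with $[h_i]_U=\lambda_i$. For $i<i'$, coherence and \L{}o\'s's theorem give that the \emph{trace} $S_{i,i'}=\Set{\xi<\delta}{f_i(\xi)=f_{i'}(\xi)\cap h_i(\xi)\text{ and }h_i(\xi)<h_{i'}(\xi)}$ lies in $U$. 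The key observation is that each trace is an element of $\POT{\delta}$, so $\{i,i'\}\mapsto S_{i,i'}$ is a colouring of $[\mu]^2$ using at most $2^\delta$ colours.

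The combinatorial hypothesis on $\mu$ is exactly what makes this colouring constant along a cofinal set. If $\mu$ is weakly compact then it is inaccessible, and (as we may assume $\mu\geq\delta^+$) $2^\delta<\mu$, so the relation $\mu\to(\mu)^2_{2^\delta}$ yields a homogeneous $H\subseteq\mu$ of size $\mu$ on which $S_{i,i'}$ has a single value $S^*\in U$. If instead $\mu>2^\delta$ and $W$ is a ${<}(2^\delta)^+$-closed ultrafilter on $\mu$ containing the cobounded sets, a two-step use of completeness does the same: for each $i$ the map $i'\mapsto S_{i,i'}$ takes ${\leq}\,2^\delta$ values on the $W$-large set $(i,\mu)$, hence is constant with value $S^{(i)}\in U$ on some $W$-large $W_i$; then $i\mapsto S^{(i)}$ takes ${\leq}\,2^\delta$ values and is constant with value $S^*\in U$ on some $W$-large $I^*$. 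Either way I obtain a fixed $S^*\in U$ and a cofinal set of indices on which the chosen representatives genuinely cohere over $S^*$. Defining $g\colon\delta\to\VV$ by setting, for $\xi\in S^*$, $g(\xi)=\bigcup_{i\in H}\bigl(f_i(\xi)\cap h_i(\xi)\bigr)$ in the weakly compact case, respectively $g(\xi)=\Set{\gamma}{\Set{i<\mu}{\gamma\in f_i(\xi)\cap h_i(\xi)}\in W}$ in the ultrafilter case, a short \L{}o\'s computation using the synchronized coherence gives $[g]_U\cap\lambda_i=X_i$ for cofinally many $i$. Hence $Z:=[g]_U\in M$ satisfies $Z\cap\lambda=A$, and as $\lambda,Z\in M$ I conclude $A=Z\cap\lambda\in M$. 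For the final \anf{in particular}, a strongly compact $\kappa$ with $\delta<\kappa\leq\cof{\lambda}$ is inaccessible, so $2^\delta<\kappa$, and it carries a $\kappa$-complete uniform ultrafilter on $\cof{\lambda}$, which is ${<}(2^\delta)^+$-closed; thus the previous case applies (this also follows from \cite[Corollary 3.3]{sakainote}).

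The main obstacle is precisely this synchronization. Each instance of coherence $X_i=X_{i'}\cap\lambda_i$ holds only on a $U$-large set of coordinates $\xi<\delta$, and since $U$ is merely $\delta$-complete there is no way to intersect the $\mu>\delta$ many such sets directly; naively trying to build the coherent pointwise union fails for exactly this reason. What rescues the argument is that these $U$-large sets are subsets of $\delta$ and so range over only $2^\delta$ possibilities: the role of weak compactness (via $\mu\to(\mu)^2_{2^\delta}$) and of the ${<}(2^\delta)^+$-closed ultrafilter is to homogenize this bounded-range colouring along a cofinal set of indices, after which a single function on $\delta$ suffices to represent the reconstructed set $Z$ inside $M$.
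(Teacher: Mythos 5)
Your proposal is correct and takes essentially the same approach as the paper: both encode the coherence of the approximations $A\cap\gamma_\eta$ as a colouring of pairs of indices by the $U$-large ``trace'' set in $\POT{\delta}$ (at most $2^\delta$ colours), homogenize it via the partition property of the weakly compact $\cof{\lambda}$ in (i), respectively via the ${<}(2^\delta)^+$-closed ultrafilter in (ii), and then reconstruct $A$ as $[g]_U\in\Ult{\VV}{U}$ for a single function $g$ on $\delta$, with the ``in particular'' clause obtained in both cases from the filter extension property of strongly compact cardinals. The only deviation is a technical variant in (ii): you apply the ultrafilter twice and define $g$ as an ultrafilter limit, whereas the paper stabilizes each index once through $F$, uses the cardinality pigeonhole $\cof{\lambda}>2^\delta$ to extract an unbounded index set, and takes $g$ to be the pointwise union over it --- the two devices are interchangeable here (and your limit formulation neatly bypasses the pairwise-coherence bookkeeping on the index set).
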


In the other direction, the fact that normal ultrafilters are not contained in the corresponding ultrapowers directly yields the following non-closure properties of these ultrapowers.

\begin{proposition}\label{proposition:MinFresh}
 Let $U$ be a normal ultrafilter on a measurable cardinal $\delta$. 
 \begin{enumerate}
  \item If $\kappa>\delta$ is the minimal cardinal with $\POT{\kappa}\nsubseteq\Ult{\VV}{U}$, then there is an unbounded subset of $\kappa$ that is fresh over $\Ult{\VV}{U}$. 
  
    \item If $\lambda$ is a limit ordinal with $\cof{\lambda}^{\Ult{\VV}{U}}=j_U(\delta^+)$, then there is an unbounded subset of $\lambda$ that is fresh over $\Ult{\VV}{U}$. 
    
  \item  If $2^\delta=\delta^+$ holds and $\lambda$ is a limit ordinal with $\cof{\lambda}=\delta^+$, then there is an unbounded subset of $\lambda$ that is fresh over $\Ult{\VV}{U}$. 
 \end{enumerate}
\end{proposition}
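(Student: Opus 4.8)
The plan is to produce, in each case, an explicit unbounded $A$ all of whose proper initial segments lie in $M:=\Ult{\VV}{U}$ while $A$ itself does not. Throughout I will lean on the two features of the ultrapower: $M$ is closed under $\delta$-sequences (so every $\le\delta$-sequence of elements of $M$, and in particular every bounded subset of an ordinal of $V$-cardinality at most $\delta$, belongs to $M$), whereas $U\notin M$; recall also $\crit{j_U}=\delta$ and $\POT\delta\subseteq M$.

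For part (i) I would fix any $A\in\POT\kappa\setminus M$ and prove the auxiliary claim that $\POT\alpha\subseteq M$ for every $\alpha<\kappa$. Granting it, $A$ is unbounded in $\kappa$ (a bounded $A$ would lie in some $\POT\alpha\subseteq M$) and $A\cap\alpha\in\POT\alpha\subseteq M$ for all $\alpha<\kappa=\lub(A)$, so $A$ is fresh. For the claim, let $\mu=\betrag{\alpha}<\kappa$, so $\POT\mu\subseteq M$ by the minimality of $\kappa$. Fix a well-ordering of $\mu$ of order type $\alpha$ (possible as $\mu\le\alpha<\mu^+$) and code it as a subset of $\mu$ via the Gödel pairing function on $\mu$; since this pairing is definable, hence computed identically in $M$ and $V$, the code lies in $\POT\mu\subseteq M$ and $M$ can decode it, producing a bijection $\alpha\to\mu$ in $M$. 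Pushing subsets of $\alpha$ forward along this bijection into $\POT\mu\subseteq M$ yields $\POT\alpha\subseteq M$.

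For parts (ii) and (iii) I would isolate a single mechanism. Put $\theta=\cof{\lambda}^{M}$ and fix a strictly increasing cofinal $c\colon\theta\to\lambda$ in $M$; note that $\cof{\theta}=\cof{\lambda}=\delta^+$ in $V$ (in (ii) because $j_U(\delta^+)=\sup j_U[\delta^+]$ is the supremum of the cofinal image $j_U[\delta^+]$ of order type $\delta^+$, in (iii) by hypothesis, giving $\theta\ge\delta^+$). It then suffices to find an unbounded $B\subseteq\theta$ that is fresh over $M$, for then $A:=c[B]$ works: it is unbounded since $c$ is cofinal; given $\alpha<\lambda$ pick $\beta<\theta$ with $c(\beta)>\alpha$, so $A\cap\alpha=c[B\cap\beta]\cap\alpha\in M$ because $B\cap\beta\in M$ and $c,\alpha\in M$; and $A\in M$ would force $B=c^{-1}[A]\in M$, as $c$ is injective and in $M$, contradicting $B\notin M$. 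Now fix in $V$ a strictly increasing cofinal $\seq{\theta_\xi}{\xi<\delta^+}$ in $\theta$. For any such sequence and any $\beta<\theta$, the initial segment $\Set{\theta_\xi}{\theta_\xi<\beta}$ is the range of a proper (hence $\le\delta$-length) initial segment of the sequence and so lies in $M$ by $\delta$-closure; thus freshness comes for free, and the whole content is the non-membership of $B$. If $\theta>\delta^+$ — which always holds in (ii), where $\theta=j_U(\delta^+)$ is a successor cardinal of $M$ above $\delta^+$ — I take $B=\ran{\seq{\theta_\xi}{\xi<\delta^+}}$: since $\otp{B}=\delta^+<\theta$, having $B\in M$ would supply a cofinal map of length $\delta^+$ into $\theta$ inside $M$, contradicting the regularity of $\theta$ in $M$. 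This disposes of (ii) and of the case $\theta>\delta^+$ of (iii) with no cardinal-arithmetic assumption.

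The remaining, and genuinely harder, case is $\theta=\delta^+$ in (iii), where the regularity argument is vacuous and one must directly exhibit an unbounded subset of $\delta^+$ not in $M$ (such a set is automatically fresh, its bounded initial segments lying in $M$ by $\delta$-closure). This is the step that uses $2^\delta=\delta^+$: fix in $V$ a bijection $e\colon\delta^+\to\POT\delta$ and let $p$ be the (definable, hence absolute) Gödel pairing on $\delta^+$, and encode $U$ by the two subsets $B_0=\Set{i<\delta^+}{e(i)\in U}$ and $E_0=\Set{p(i,\zeta)}{i<\delta^+,\ \zeta\in e(i)}$ of $\delta^+$. If both $B_0$ and $E_0$ were in $M$, then $M$ could recover $e$ from $E_0$ and $p$ via $e(i)=\Set{\zeta<\delta}{p(i,\zeta)\in E_0}$, and then $U=e[B_0]$, contradicting $U\notin M$; hence at least one of $B_0,E_0$ is unbounded and not in $M$, and serves as $B$. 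I expect the non-membership of $A$ (equivalently $B$) to be the main obstacle throughout: establishing freshness is routine $\delta$-closure bookkeeping, whereas keeping $A$ out of $M$ requires either the regularity of $\theta$ in $M$ or, when $\theta=\delta^+$, a coding of $U$ whose decoding apparatus is absolute enough to be carried out inside $M$.
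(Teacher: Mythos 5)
Your proof is correct, and its overall architecture is the paper's: part (i) is the same minimality argument (you additionally spell out the transfer of power sets across a coded bijection, which the paper leaves implicit), and parts (ii) and (iii) both come down to pushing a fresh unbounded subset of $\theta=\cof{\lambda}^{\Ult{\VV}{U}}$ forward along a strictly increasing cofinal map $c\in\Ult{\VV}{U}$, with freshness of initial segments coming for free from $\delta$-closure. The one step where you genuinely diverge is the non-membership argument in (ii): the paper takes the specific set $(c\circ j_U)[\delta^+]$ and cites the standard fact that $j_U[\delta^+]\notin\Ult{\VV}{U}$, whereas you take an \emph{arbitrary} cofinal subset of $j_U(\delta^+)$ of order type $\delta^+$ and observe that no such set can lie in $\Ult{\VV}{U}$, since $j_U(\delta^+)>\delta^+$ is regular there. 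Your version is more self-contained (no external citation) and it unifies (ii) with the case $\cof{\lambda}^{\Ult{\VV}{U}}>\delta^+$ of (iii), which the paper handles by exactly this regularity observation applied directly inside $\lambda$; the only fact you assert without proof is $j_U(\delta^+)>\delta^+$, which is standard and worth a line (e.g.\ $j_U(\delta)\geq\delta^+$ because a bijection between $\delta$ and $j_U(\delta)$ in $\VV$ would belong to $\Ult{\VV}{U}$ by $\delta$-closure, contradicting that $j_U(\delta)$ is a cardinal there). Finally, in (iii) with $\theta=\delta^+$, your explicit coding of $U$ by the two sets $B_0,E_0\subseteq\delta^+$ is precisely the content behind the paper's one-line claim that $2^\delta=\delta^+$ and $U\notin\Ult{\VV}{U}$ yield $\POT{\delta^+}\nsubseteq\Ult{\VV}{U}$ (after which the paper invokes part (i)), so there the two arguments coincide, with yours making the decoding inside $\Ult{\VV}{U}$ explicit.
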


In the following, we will present results that show that the above propositions already cover all provable closure and non-closure properties of measurable ultrapowers, in the sense that there are models of set theory in which fresh subsets exist at all limit ordinals that are not ruled out by Proposition \ref{proposition:MinNonFresh} and models in which fresh subsets only exist at limit ordinals, where their existence is guaranteed by Proposition \ref{proposition:MinFresh}. 
 
 Results of Sakai on the approximation properties of measurable ultrapowers in \cite{sakainote} can directly be used to prove the following result that shows that models with minimal non-closure properties can be constructed by collapsing a strongly compact cardinal to become the double successor of a measurable cardinal. 
 Note that we phrase the following result in a non-standard way to clearly distinguish between the ground model of the  forcing extension and the model used in the corresponding ultrapower construction.

\begin{theorem}\label{theorem:sakai}
 Let $\delta$ be a measurable cardinal and let $\WW$ be an inner model such that the $\GCH$ holds in $\WW$, $\VV$ is a $\Col{(\delta^+)^\VV}{{<}(\delta^{++})^\VV}^\WW$-generic extension of $\WW$ and $(\delta^{++})^\VV$ is strongly compact in $\WW$.  
  Given a normal ultrafilter $U$ on $\delta$, the following statements are equivalent for every  limit ordinal $\lambda$:  
 \begin{enumerate}
   \item There is an unbounded subset of $\lambda$ that is fresh over $\Ult{\VV}{U}$. 
   
   \item $\cof{\lambda}=\delta^+$. 
  \end{enumerate}
\end{theorem}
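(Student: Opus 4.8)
The plan is to prove Theorem~\ref{theorem:sakai} by verifying the two implications separately, leaning on the two propositions already established and on the hypothesis that $(\delta^{++})^\VV$ is strongly compact in $\WW$.

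First I would dispatch the direction $(2)\Rightarrow(1)$. Since $\VV$ is a $\Col{(\delta^+)^\VV}{{<}(\delta^{++})^\VV}^\WW$-generic extension of $\WW$, the L\'evy collapse adds no new subsets of $\delta$ and preserves $\delta^+$, so in $\VV$ we have $(\delta^{++})^\VV = (\delta^+)^{+\VV}$ and, crucially, $2^\delta = \delta^+$: the $\GCH$ in $\WW$ together with the fact that the collapse is ${<}(\delta^{++})^\WW$-closed-ish and has size $(\delta^{++})^\WW$ forces $2^\delta$ to be exactly $\delta^+$ in $\VV$. Once $2^\delta=\delta^+$ is in hand, part~(iii) of Proposition~\ref{proposition:MinFresh} applies directly: every limit ordinal $\lambda$ with $\cof{\lambda}=\delta^+$ carries an unbounded fresh subset over $\Ult{\VV}{U}$. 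This gives the forward implication with essentially no extra work, and I would spell out the computation of $2^\delta$ as the only substantive point here.

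The harder direction is $(1)\Rightarrow(2)$, equivalently its contrapositive: if $\cof{\lambda}\neq\delta^+$, then no unbounded subset of $\lambda$ is fresh over $\Ult{\VV}{U}$. I would split on the value of $\cof{\lambda}$ and try to route every case through Proposition~\ref{proposition:MinNonFresh}. If $\cof{\lambda}<\delta^+$, part~(i) of that proposition immediately rules out fresh subsets, and since $\cof{\lambda}$ is a regular cardinal the only remaining possibilities are $\cof{\lambda}=\delta$ (covered by $<\delta^+$) or $\cof{\lambda}\geq\delta^{++}$. Because $2^\delta=\delta^+$ in $\VV$, the hypothesis of part~(ii) simplifies to requiring a ${<}(\delta^{++})$-closed ultrafilter on $\cof{\lambda}$ containing all cobounded sets, whenever $\cof{\lambda}>\delta^+$. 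The strongly compact cardinal $\kappa:=(\delta^{++})^\VV$ of $\WW$ is exactly what produces such ultrafilters: for any regular $\theta\geq\kappa$, strong compactness yields a $\kappa$-complete (hence ${<}\kappa={<}(\delta^{++})^\VV$-closed) fine, and in particular uniform, ultrafilter on $\theta$, which is the object the proposition needs. Thus the crux is to check that the strong compactness of $\kappa$ in $\WW$ survives the collapse well enough to furnish these ultrafilters on every regular $\cof{\lambda}\geq\delta^{++}$ as computed in $\VV$.

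I expect the main obstacle to be precisely this last point: producing, in $\VV$, the required ${<}(2^\delta)^+$-closed uniform ultrafilter on every regular cardinal $\theta\geq\kappa$, given only that $\kappa$ was strongly compact in the inner model $\WW$. The natural tool is Sakai's analysis of the approximation properties of the collapse \cite{sakainote}; the relevant feature is that $\Col{(\delta^+)^\VV}{{<}\kappa}^\WW$ has a suitable factoring or absorption property so that the strong compactness measures of $\WW$ either lift to $\VV$ or can be reconstructed from $\WW$-ultrafilters on sets that the collapse leaves essentially untouched. Concretely, I would argue that for $\theta\geq\kappa$ regular in $\VV$, a $\kappa$-complete fine ultrafilter on $\POTI{\theta}{\kappa}$ in $\WW$ projects to a uniform, ${<}\kappa$-closed ultrafilter on $\theta$ that remains ${<}\kappa$-closed in $\VV$ because the collapse adds no ${<}(\delta^+)$-sequences and the completeness threshold $\kappa=(\delta^{++})^\VV$ sits above $\delta^+$. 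Verifying that closure is genuinely preserved into $\VV$---rather than merely in $\WW$---is the delicate step, and it is here that the precise chain condition and closure of the L\'evy collapse, as isolated by Sakai, must be invoked rather than quoted loosely.
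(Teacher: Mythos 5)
Your $(2)\Rightarrow(1)$ direction is fine and is exactly the paper's: the collapse is ${<}\delta^+$-closed (not ``${<}\delta^{++}$-closed-ish''), so $2^\delta=\delta^+$ survives from $\WW$, and Proposition \ref{proposition:MinFresh}(iii) applies. The genuine gap is in $(1)\Rightarrow(2)$: your plan to route every case $\cof{\lambda}\geq\delta^{++}$ through Proposition \ref{proposition:MinNonFresh}(ii) cannot work, and it fails already at the first and most important case $\cof{\lambda}=\delta^{++}$. A ${<}(2^\delta)^+$-closed ultrafilter on $\delta^{++}$ containing all cobounded sets is a nonprincipal $\delta^{++}$-complete ultrafilter on $\delta^{++}$, which would make $\delta^{++}$ measurable in $\VV$ --- impossible, since $\delta^{++}=(\delta^+)^{+\VV}$ is a successor cardinal there. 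The point is that $\kappa=(\delta^{++})^\VV$ is \emph{not} strongly compact in $\VV$ (the collapse destroys this outright), and your proposed repair --- projecting a $\kappa$-complete fine ultrafilter on $\POTI{\theta}{\kappa}$ from $\WW$ and arguing its closure persists --- founders on a more basic obstruction than closure: a $\WW$-ultrafilter is not an ultrafilter on $\POT{\theta}^\VV$ at all, since the collapse adds subsets of $\theta$ that differ from every $\WW$-set on an unbounded set, and no factoring or absorption property can conjure the desired object, because for $\theta=\delta^{++}$ (and indeed for any regular $\theta$ with no $\VV$-measurable in the interval $[\delta^{++},\theta]$, as the completeness degree of a nonprincipal $\delta^{++}$-complete ultrafilter is a measurable cardinal) it provably does not exist in $\VV$.

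What the paper actually takes from Sakai is not an ultrafilter-transfer statement but the conclusion of {\cite[Corollary 3.3]{sakainote}}: in this model, $\Ult{\VV}{U}$ has the \emph{$\delta^{++}$-approximation property}, i.e., $X\in\Ult{\VV}{U}$ whenever $X\cap a\in\Ult{\VV}{U}$ for all $a\in\Ult{\VV}{U}$ with $\betrag{a}^{\Ult{\VV}{U}}<\delta^{++}$. With this in hand the argument is three lines: if $A\subseteq\lambda$ is unbounded and fresh with $\cof{\lambda}\neq\delta^+$, then Proposition \ref{proposition:MinNonFresh}(i) forces $\cof{\lambda}\geq\delta^{++}$; since $A\notin\Ult{\VV}{U}$, the approximation property yields some $a\in\Ult{\VV}{U}$ with $\betrag{a}^{\Ult{\VV}{U}}<\delta^{++}$ and $A\cap a\notin\Ult{\VV}{U}$; but $\betrag{A\cap a}<\delta^{++}\leq\cof{\lambda}$ makes $A\cap a$ bounded in $\lambda$, so freshness puts $A\cap a\in\Ult{\VV}{U}$ --- a contradiction. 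So the fix is to replace your entire ultrafilter-production step by an appeal to the approximation property of the ultrapower itself; Proposition \ref{proposition:MinNonFresh}(ii) plays no role in this direction.
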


\begin{proof}
 For one direction, our assumptions directly imply that the $\GCH$ holds and therefore Proposition \ref{proposition:MinFresh} shows that for every limit ordinal $\lambda$ with $\cof{\lambda}=\delta^+$, there exists an unbounded subset of $\lambda$ that is fresh over $\Ult{\VV}{U}$. 
  For the other direction, assume, towards a contradiction, that $\lambda$ is a limit ordinal with $\cof{\lambda}\neq\delta^+$ and the property that some unbounded subset $A$ of $\lambda$ is fresh over $\Ult{\VV}{U}$. Then Proposition \ref{proposition:MinNonFresh} implies that $\cof{\lambda}>\delta^+$. 
  By our assumption, {\cite[Corollary 3.3]{sakainote}} directly implies that $\Ult{\VV}{U}$ has the \emph{$\delta^{++}$-approximation property}, i.e., we have $X\in\Ult{\VV}{U}$ whenever a set $X$ of ordinals has the property that $X\cap a\in\Ult{\VV}{U}$ holds for all $a\in\Ult{\VV}{U}$ with $\betrag{a}^{\Ult{\VV}{U}}<\delta^{++}$. 
  In particular, there exists some $a\in\Ult{\VV}{U}$ with $\betrag{a}^{\Ult{\VV}{U}}<\delta^{++}$ and $A\cap a\notin\Ult{\VV}{U}$. 
  But then the fact that $\cof{\lambda}\geq\delta^{++}>\betrag{a}^{\Ult{\VV}{U}}\geq\betrag{A\cap a}$ implies that $A\cap a$ is bounded in $\lambda$ and hence the assumption that $A$ is fresh over $\Ult{\VV}{U}$ allows us to conclude that $A\cap a$ is an element of $\Ult{\VV}{U}$, a contradiction.  
\end{proof}

For the other direction, we will prove results that show that canonical inner models provide measurable ultrapowers with closure properties that are minimal in the above sense. These arguments make use of the validity of various combinatorial principles in these models. In particular, they heavily rely on the existence of suitable \emph{square sequences}. 
 For specific types of cardinals, similar constructions have already been done in {\cite[Section 3.3]{sakainote}} and {\cite[Section 3]{MR3523658}}.

\begin{definition}
 \begin{enumerate}
  \item Given an uncountable regular cardinal $\kappa$, a sequence $\seq{C_\gamma}{\gamma\in\Lim\cap\kappa}$ is a \emph{$\square(\kappa)$-sequence} if the following statements hold: 
   \begin{enumerate}
    \item $C_\gamma$ is a closed unbounded subset of $\gamma$ for all $\gamma\in\Lim\cap\kappa$. 
    
    \item If $\gamma\in\Lim\cap\kappa$ and $\beta\in\Lim(C_\gamma)$, then $C_\beta=C_\gamma\cap\beta$. 
    
    \item There is no closed unbounded subset $C$ of $\kappa$ with $C\cap\gamma=C_\gamma$ for all $\gamma\in\Lim(C)$. 
   \end{enumerate}
   
  \item Given an infinite cardinal $\kappa$, a $\square(\kappa^+)$-sequence $\seq{C_\gamma}{\gamma\in\Lim\cap\kappa^+}$ is a \emph{$\square_\kappa$-sequence} if $\otp{C_\gamma}\leq\kappa$ holds for all $\gamma\in\Lim\cap\kappa^+$. 
 \end{enumerate}
\end{definition}

 The next result shows that, in certain models of set theory, fresh subsets for measurable ultrapowers exist at all limit ordinals that are not ruled out by the conclusions of Proposition \ref{proposition:MinNonFresh}.

\begin{theorem}\label{theorem:CanonicalModelsCharFresh}
  Let $U$ be a normal ultrafilter on a measurable cardinal $\delta$.
  Assume that the following statements hold: 
 \begin{enumerate}
  \item[(a)]  The $\GCH$ holds at all cardinals greater than or equal to $\delta$. 
  
  \item[(b)] If $\kappa>\delta^+$ is a regular cardinal that is not weakly compact, then there exists a $\square(\kappa)$-sequence. 
  
  \item[(c)] If $\kappa>\delta$ is a singular cardinal, then there exists a $\square_\kappa$-sequence. 
  
 \end{enumerate}
 Then the following statements are equivalent for every limit ordinal $\lambda$: 
  \begin{enumerate}
   \item There is an unbounded subset of $\lambda$ that is fresh over $\Ult{\VV}{U}$. 
   
   \item The cardinal $\cof{\lambda}$ is greater than $\delta$ and not weakly compact. 
  \end{enumerate}
\end{theorem}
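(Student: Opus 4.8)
The plan is to prove the two implications separately; the implication $(1)\Rightarrow(2)$ is immediate and the reverse implication carries all of the content. For $(1)\Rightarrow(2)$ I would argue by contraposition directly from Proposition \ref{proposition:MinNonFresh}(1): if $\cof{\lambda}$ fails to be greater than $\delta$, i.e. $\cof{\lambda}<\delta^+$, or if $\cof{\lambda}$ is weakly compact, then no unbounded subset of $\lambda$ is fresh over $\Ult{\VV}{U}$. Hence the existence of an unbounded fresh subset forces $\cof{\lambda}>\delta$ with $\cof{\lambda}$ not weakly compact. This direction uses none of the hypotheses (a)--(c). For $(2)\Rightarrow(1)$ I set $\kappa=\cof{\lambda}$, a regular cardinal with $\delta<\kappa$ and $\kappa$ not weakly compact, and split on $\kappa$. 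If $\kappa=\delta^+$, then hypothesis (a) gives $2^\delta=\delta^+$, so Proposition \ref{proposition:MinFresh}(3) already produces an unbounded subset of $\lambda$ that is fresh over $\Ult{\VV}{U}$, and there is nothing further to do. The substantial case is $\kappa>\delta^+$, where hypothesis (b) supplies a $\square(\kappa)$-sequence $\seq{C_\gamma}{\gamma\in\Lim\cap\kappa}$.

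I would first treat the case $\lambda=\kappa$, so that $\lambda$ is a regular cardinal, and manufacture the fresh set $A\subseteq\kappa$ from the $\square(\kappa)$-sequence. The governing idea is to view the initial segments $A\cap\alpha$ as the nodes of a coherent, non-threadable tree of height $\kappa$: the coherence clause of the definition of a $\square(\kappa)$-sequence, together with the counting supplied by hypothesis (a), should let me arrange each $A\cap\alpha$ to be reconstructible inside $M:=\Ult{\VV}{U}$ from boundedly much data, so that $A\cap\alpha\in M$ for every $\alpha<\kappa$, while $A\in M$ would, via the same coherence, yield a thread through the whole of $\seq{C_\gamma}{\gamma\in\Lim\cap\kappa}$ and so contradict the non-threadability clause in $V\supseteq M$. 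Concretely I expect to compare $\seq{C_\gamma}{\gamma\in\Lim\cap\kappa}$ with its image $j_U(\seq{C_\gamma}{\gamma\in\Lim\cap\kappa})\in M$, a $\square(j_U(\kappa))$-sequence, and to extract $A$ from the way this image, restricted below $\kappa$, fails to cohere with the original sequence above $\delta=\crit{j_U}$; here the $\GCH$ computation $\betrag{j_U(\delta)}=2^\delta=\delta^+$ and the resulting control of the successor cardinals of $M$ are what guarantee that the approximations land in $M$ and that $A$ itself does not.

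To pass from $\kappa$ to an arbitrary limit ordinal $\lambda$ with $\cof{\lambda}=\kappa$, I would fix a continuous strictly increasing cofinal map $\map{e}{\kappa}{\lambda}$ and transport $\seq{C_\gamma}{\gamma\in\Lim\cap\kappa}$ along $e$ to a coherent, non-threadable sequence on the club $e[\kappa]\subseteq\lambda$, then rerun the construction of the previous paragraph directly at $\lambda$. In this transported form, the property $A\cap\beta\in M$ for $\beta<\lambda$ reduces to having each restriction $e\upharpoonright\alpha\in M$, while non-membership of the resulting set in $M$ is inherited from the transported non-threadability and does not require pulling the set back along $e$. The role of hypothesis (c) is exactly to secure such an $e$ and such short transported clubs when $\lambda$ is singular or $\kappa$ is the successor of a singular cardinal $\mu>\delta$: the order-type bound $\otp{C_\gamma}\leq\mu$ built into a $\square_\mu$-sequence keeps the relevant clubs short enough that $j_U$ does not stretch them beyond the reach of the $\delta$-closure of $M$, so that their initial segments are absorbed into $M$.

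The step I expect to be the main obstacle is the construction underlying the second paragraph: arranging \emph{simultaneously} that every proper initial segment of $A$ lies in $M$ — which pushes against the fact that $\POT{\alpha}\nsubseteq M$ already for $\alpha=\delta^+$ — and that $A$ itself does not lie in $M$. These two demands are an approximation statement and the negation of its top-level instance, and pull in opposite directions; reconciling them is precisely the point at which the coherence and non-threadability of the $\square(\kappa)$-sequence, rather than its mere existence as some subset of $\kappa$, together with the $\GCH$-driven cardinal arithmetic of $M$, must be used in an essential way.
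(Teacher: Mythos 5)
Your treatment of the implication (i)$\Rightarrow$(ii) and of the case $\cof{\lambda}=\delta^+$ agrees with the paper, but the two steps that carry all the content have genuine gaps. First, the construction of a fresh set from a $\square(\kappa)$-sequence: your concrete plan — compare $\seq{C_\gamma}{\gamma\in\Lim\cap\kappa}$ with $j_U(\seq{C_\gamma}{\gamma\in\Lim\cap\kappa})$ and extract $A$ from a failure of coherence below $\kappa$ — does not get off the ground, because $j_U$ is continuous at every regular $\kappa>\delta$ (every $\map{f}{\delta}{\kappa}$ is bounded, so $j_U(\kappa)=\sup j_U[\kappa]$); hence there is no club of the form $j_U(\vec{C})_{\sup j_U[\kappa]}$ to pull back, unlike in the classical weakly compact argument where the critical point is $\kappa$ itself. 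Moreover, plain $\square(\kappa)$-coherence does not survive $j_U$: one has $j_U(C_\beta)=j_U(C_\gamma)\cap j_U(\beta)$ only when $\beta\in\Lim(C_\gamma)$, so $\bigcup_\gamma j_U(C_\gamma)$ is not an increasing coherent union and its initial segments need not lie in $\Ult{\VV}{U}$. The paper resolves exactly this obstacle — the one you yourself flag as "the main obstacle" without filling it — by first converting $\square(\kappa)$ into an \emph{indexed} square $\square^{\mathrm{ind}}(\kappa,\delta)$ via a nontrivial theorem of Lambie-Hanson, whose $\delta$-indexed columns yield functions $f_\gamma(\xi)=C_{\gamma,\xi}$ with genuinely coherent classes $[f_\gamma]_U$ (Theorem \ref{theorem:IndSquareFresh}); note also that the resulting fresh club lives in $j_U(\kappa)$, not in $\kappa$, so it only settles the case $j_U(\kappa)=\kappa$ directly.

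Second, your transfer from $\kappa=\cof{\lambda}$ to $\lambda$ is circular: you require a cofinal $\map{e}{\kappa}{\lambda}$ with $e\restriction\alpha\in\Ult{\VV}{U}$ for all $\alpha<\kappa$, and producing a map all of whose proper initial segments lie in the ultrapower while controlling the limit object is precisely the freshness problem you are trying to solve. The correct transfer (Proposition \ref{proposition:CofUltraFresh}) uses a cofinal map lying \emph{in} $\Ult{\VV}{U}$, which necessarily has domain $\lambda_0=\cof{\lambda}^{\Ult{\VV}{U}}$ rather than $\cof{\lambda}$; this is why the bulk of the paper's proof concerns ordinals that are regular cardinals in $\Ult{\VV}{U}$ but not cardinals in $\VV$, locating $\lambda_0$ in $(\kappa^+,j_U(\kappa)]\cup\{j_U(\kappa^+)\}$ for a suitable $\kappa$ of cofinality $\delta$ and invoking Theorem \ref{theorem:FreshLimitOrdinalsNotCardinals} — machinery absent from your plan. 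Relatedly, hypothesis (c) is not about "keeping transported clubs short": when $\kappa=\nu^+$ with $\nu$ singular of cofinality $\delta$, one has $j_U(\kappa)>\kappa$, so the image-point construction cannot produce a fresh subset of $\kappa$ at all, and the paper instead runs a separate, long recursive construction (Theorem \ref{theorem:SuccSingularFresh}): a scale of functions below $[\xi\mapsto\kappa_\xi^+]_U=\nu^+$ together with coherent clubs $c_\gamma(\xi)$, diagonalized against a $\GCH$-enumeration of $\prod_{\xi<\delta}\POT{\kappa_\xi^+}$ on a stationary set avoiding all limit points of the $\square_\nu$-sequence. Nothing in your sketch substitutes for either of these constructions, so the proposal as it stands does not prove the theorem.
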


With the help of results of Schimmerling and Zeman in \cite{MR2081183} and \cite{MR2563821}, the above result will allow us to show that measurable ultrapowers of a large class of canonical inner models, so called \emph{Jensen-style extender models}, possess the minimal amount of closure properties with respect to freshness. These inner models go back to Jensen in \cite{Je97}, following a suggestion of S. Friedman, and can have various large cardinals below supercompact cardinals. As for example in \cite{MR2563821}, we demand that they satisfy classical consequences of iterability such as solidity and condensation. The theorem below holds for Mitchell-Steel extender models with the same properties constructed as in \cite{MS94} as well. But it turned out that Jensen-style constructions are more natural in the proof of $\square$-principles in canonical inner models, so this is what Schimmerling and Zeman use in \cite{MR2081183} and \cite{MR2563821}, and we decided to follow their notation.
 %

\begin{theorem}\label{theorem:InnerModels}
 Assume that $\VV$ is a \emph{Jensen-style extender model} that does not have a subcompact cardinal. 
 Then the statements (i) and (ii) listed in Theorem \ref{theorem:CanonicalModelsCharFresh} are equivalent for every normal ultrafilter $U$ on a measurable cardinal $\delta$ and every limit ordinal $\lambda$.  
\end{theorem}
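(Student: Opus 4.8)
The plan is to derive Theorem \ref{theorem:InnerModels} directly from Theorem \ref{theorem:CanonicalModelsCharFresh} by checking that a Jensen-style extender model $\VV$ without subcompact cardinals satisfies the three hypotheses (a), (b) and (c) of that theorem for \emph{every} measurable cardinal $\delta$. Once this is done, the conclusion is immediate: for each normal ultrafilter $U$ on each measurable $\delta$, Theorem \ref{theorem:CanonicalModelsCharFresh} applies and yields the equivalence of statements (i) and (ii) for every limit ordinal $\lambda$. So the entire argument is a reduction whose content lies in verifying (a)--(c).

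First I would dispose of (a), which is the easy step: by the standard fine-structural analysis of these models (acceptability together with condensation), the $\GCH$ holds at every infinite cardinal of $\VV$, and in particular at all cardinals greater than or equal to $\delta$. The combinatorial core is then the verification of (b) and (c), which is exactly where I would invoke the work of Schimmerling and Zeman. For (c), note that subcompactness implies inaccessibility, so no singular cardinal is subcompact; hence the characterization of $\square_\kappa$ in \cite{MR2081183} applies and produces a $\square_\kappa$-sequence for every singular cardinal $\kappa$, in particular for every singular $\kappa>\delta$. (Here the global absence of subcompact cardinals is not even needed, since singular cardinals are automatically non-subcompact.)

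For (b), fix a regular cardinal $\kappa>\delta^+$ that is not weakly compact and split into two cases. If $\kappa=\mu^+$ is a successor, then $\mu$ is not subcompact because $\VV$ has no subcompact cardinal, so \cite{MR2081183} supplies a $\square_\mu$-sequence $\seq{C_\gamma}{\gamma\in\Lim\cap\mu^+}$; since $\otp{C_\gamma}\leq\mu$ holds for every $\gamma$, any thread would be a closed unbounded subset of $\mu^+$ whose proper initial segments at its limit points all have order type at most $\mu$, which is impossible, and therefore this sequence is in fact a $\square(\kappa)$-sequence. If instead $\kappa$ is a limit cardinal, then the $\GCH$ forces it to be inaccessible, and I would appeal to the results on $\square(\kappa)$ at regular non-weakly-compact cardinals in extender models from \cite{MR2563821}, whose hypotheses are met precisely because $\VV$ is a Jensen-style extender model with no subcompact cardinal.

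The step I expect to be the main obstacle is the inaccessible instance of (b): unlike the successor case, it does not reduce to a $\square_\mu$-sequence and genuinely requires a coherent, threadless sequence on the inaccessible cardinal $\kappa$ itself. This is the content imported from \cite{MR2563821}, so the real work on our side is to confirm that the iterability, solidity and condensation properties built into the definition of a Jensen-style extender model, together with the absence of subcompact cardinals, place us within the scope of those theorems. With (a), (b) and (c) thus established for every measurable $\delta$, Theorem \ref{theorem:CanonicalModelsCharFresh} delivers the equivalence of (i) and (ii) for every normal ultrafilter $U$ on a measurable cardinal $\delta$ and every limit ordinal $\lambda$, completing the proof.
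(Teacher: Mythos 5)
Your proposal is correct and follows essentially the same route as the paper: both verify hypotheses (a)--(c) of Theorem \ref{theorem:CanonicalModelsCharFresh}, using the $\GCH$ in extender models for (a), the Schimmerling--Zeman characterization of $\square_\nu$ via non-subcompactness \cite{MR2081183} for the successor instances of (b) and for (c), and the Schimmerling--Zeman result on $\square(\kappa)$ at inaccessible cardinals \cite{MR2563821} for the remaining instances of (b). Your additional observations (that singular cardinals are never subcompact, and that the order-type bound on a $\square_\mu$-sequence rules out threads) are correct refinements but do not change the argument.
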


We restrict ourselves to inner models without subcompact cardinals in the statement of Theorem \ref{theorem:InnerModels}, as the non-existence of $\square_\kappa$-sequences in Jensen-style extender models is equivalent to $\kappa$ being  subcompact (see \cite{MR2081183}). Results of Kypriotakis and Zeman in \cite{zbMATH06136601} show that $\square(\kappa^+)$-sequences can exists even if $\kappa$ is subcompact, but we decided to not discuss this further here.

The techniques developed in the proof of Theorem \ref{theorem:CanonicalModelsCharFresh} also allows us to derive large lower bounds for the consistency strength of the conclusion of Theorem \ref{theorem:sakai}.

\begin{theorem}\label{theorem:StrengthSakai}
 Let $U$ be a normal ultrafilter on a measurable cardinal $\delta$. 
 \begin{enumerate}
  \item If there exists a regular cardinal $\kappa>\delta^+$ and a cardinal $\theta\in\{\kappa,\kappa^+\}$ with the property that  there  exists a $\square(\theta)$-sequence, then there exists a limit ordinal $\lambda$ of cofinality $\theta$ and an unbounded subset of $\lambda$ that is fresh over $\Ult{\VV}{U}$. 
  
  \item If there exists a singular strong limit cardinal $\kappa$ with the property that $\cof{\kappa}=\delta$, the $\SCH$\footnote{Remember that the $\SCH$ states that $\kappa^{\cof{\kappa}}=\kappa^+$ holds for every singular cardinal $\kappa$ with $2^{\cof{\kappa}}<\kappa$.} holds at $\kappa$ and there exists a $\square_\kappa$-sequence, then there exists a limit ordinal $\lambda$ with $\cof{\lambda}=\kappa^+$ and an unbounded subset of $\lambda$ that is fresh over $\Ult{\VV}{U}$. 
 \end{enumerate}
  %
\end{theorem}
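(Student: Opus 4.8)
The plan is to handle both parts through a single scheme: reduce the existence of an unbounded fresh subset of a suitable $\lambda$ to the existence of a \emph{thread} through the $j_U$-image of the given square sequence, and then construct such a thread in $\VV$ by exploiting a cofinality asymmetry between $\VV$ and $M:=\Ult{\VV}{U}$. For part (1), fix $\theta\in\{\kappa,\kappa^+\}$, so $\theta$ is regular with $\theta>\delta^+>\delta=\crit{j_U}$. Since $\cof{\theta}=\theta>\delta$ and $U$ is $\delta$-complete, every $\map{f}{\delta}{\theta}$ is bounded below $\theta$, so $j_U$ is continuous at $\theta$ and $\lambda:=\sup j_U[\theta]=j_U(\theta)$ is a limit ordinal. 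As $j_U[\theta]$ is cofinal in $\lambda$ of order type $\theta$, we get $\cof{\lambda}=\theta$ in $\VV$, matching the desired conclusion, while $\lambda=j_U(\theta)$ is regular in $M$. Writing $\vec{C}=\seq{C_\gamma}{\gamma\in\Lim\cap\theta}$ for the given $\square(\theta)$-sequence and $\vec{D}=j_U(\vec{C})=\seq{D_\eta}{\eta\in\Lim\cap\lambda}$, elementarity makes $\vec{D}$ a $\square(\lambda)$-sequence in $M$ with each $D_\eta\in M$. (Note that the mere existence of $\square(\theta)$ forces $\theta$ to fail to be weakly compact, so Proposition \ref{proposition:MinNonFresh} does not already forbid freshness at $\lambda$.)

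I would next observe that any \emph{thread} $C$ through $\vec{D}$ computed in $\VV$, i.e.\ any club $C\subseteq\lambda$ with $C\cap\eta=D_\eta$ for all $\eta\in\Lim(C)$, is automatically fresh over $M$. Indeed $C$ is unbounded, so $\lub(C)=\lambda$; for any $\alpha<\lambda$ we may pick $\eta\in\Lim(C)$ with $\eta>\alpha$, and then $C\cap\alpha=(C\cap\eta)\cap\alpha=D_\eta\cap\alpha\in M$ because $D_\eta\in M$; finally $C\notin M$, since a thread through $\vec{D}$ lying in $M$ would contradict clause (iii) of $\vec{D}$ being a $\square(\lambda)$-sequence there.

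The remaining and genuinely hard task is to produce such a thread in $\VV$. The governing principle is the cofinality asymmetry: a coherent increasing chain $\seq{\eta_i}{i<\theta}$ with $\eta_i\in\Lim(D_{\eta_{i+1}})$ forces $D_{\eta_i}=D_{\eta_{i+1}}\cap\eta_i$, so that $C:=\bigcup_{i<\theta}D_{\eta_i}$ is a club threading $\vec{D}$; such a chain has length $\theta$, which is cofinal in $\lambda$ from the standpoint of $\VV$ but bounded from the standpoint of $M$, and this is exactly why $M$ has no thread while $\VV$ can. I would construct $\seq{\eta_i}{i<\theta}$ by recursion, at stage $i$ choosing $\eta_{i+1}$ above both $\eta_i$ and the $i$-th term of a fixed cofinal sequence in $\lambda$ and with $\eta_i\in\Lim(D_{\eta_{i+1}})$. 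The main obstacle, which I expect to be the combinatorial heart of the proof, is to guarantee that the recursion never gets stuck: one must show that the set of ordinals that occur as limit points of club-many later clubs $D_{\eta'}$ is unbounded and internally ``cofinally branching'' enough to allow the chain to be pushed through all of $\lambda$. This is where the coherence of the original sequence $\vec{C}$, the factorization of $j_U$ through $j_U[\theta]$, and a normality/pressing-down argument would have to be combined to locate the required cofinal branch.

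For part (2), I would run the same scheme with $\theta=\kappa^+$, so that $\lambda=\sup j_U[\kappa^+]=j_U(\kappa^+)$ again has cofinality $\kappa^+$ in $\VV$ and is regular in $M$, and $\vec{D}=j_U(\vec{C})$ is a $\square_{j_U(\kappa)}$-sequence in $M$; threads through $\vec{D}$ are fresh over $M$ exactly as above. The extra hypotheses enter only in the construction of the thread. The assumption $\cof{\kappa}=\delta=\crit{j_U}$ makes $j_U$ discontinuous at $\kappa$, so $\sup j_U[\kappa]<j_U(\kappa)$ contributes ``new'' ordinals through which the measure can be used to thread, while the bound $\otp{C_\gamma}\leq\kappa$ keeps the relevant approximations small; the hypotheses that $\kappa$ is a strong limit at which the $\SCH$ holds pin down $2^\kappa=\kappa^+$, which is what permits the length-$\kappa^+$ recursion to diagonalize against the at most $\kappa^+$-many coherence constraints and to confirm that bounded initial segments are correctly captured by $M$. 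As in part (1), the principal difficulty remains showing that this recursion can be carried cofinally through $\lambda$ without getting stuck.
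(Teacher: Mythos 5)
Your reduction is sound as far as it goes: $\lambda=\sup j_U[\theta]=j_U(\theta)$ does have cofinality $\theta$ in $\VV$ (every $\map{f}{\delta}{\theta}$ is bounded since $\theta$ is regular above $\delta$), and any $\VV$-thread through $\vec{D}=j_U(\vec{C})$ below $\lambda$ would indeed be fresh over $\Ult{\VV}{U}$, with non-membership following from clause (iii) of $\square(\lambda)$ holding for $\vec{D}$ in the ultrapower. But the step you yourself flag as the combinatorial heart --- that your recursion never gets stuck --- is a genuine gap, and as sketched the recursion fails. At a successor step, nothing guarantees that $\eta_i$ is a limit point of \emph{any} $D_{\eta'}$ with $\eta'$ arbitrarily large: coherence in a square sequence only propagates downward from clubs you already hold, and the set $\Set{\eta'<\lambda}{\eta_i\in\Lim(D_{\eta'})}$ may well be bounded. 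Worse, at a limit stage $i$ the accumulated union $E=\bigcup_{j<i}D_{\eta_j}$ is a club in $\eta_i=\sup_{j<i}\eta_j$ that need not equal $D_{\eta'}\cap\eta_i$ for any $\eta'\geq\eta_i$ (when $\cof{i}\leq\delta$ one even has $E\in\Ult{\VV}{U}$, and a $\square(\lambda)$-sequence gives no reason for a prescribed club to occur as a restriction of one of its members); no pressing-down argument repairs this. The paper avoids threading recursions entirely: by {\cite[Theorem 3.4]{MR3893286}}, $\square(\theta)$ yields an \emph{indexed} square $\square^{\mathrm{ind}}(\theta,\delta)$-sequence $\seq{C_{\gamma,\xi}}{\gamma<\theta,~i(\gamma)\leq\xi<\delta}$, and setting $f_\gamma(\xi)=C_{\gamma,\xi}$, the monotonicity in $\xi$ together with clause (v) of Definition \ref{ind_square_def} gives, via {\L}os, full pairwise coherence $[f_\beta]_U=[f_\gamma]_U\cap j_U(\beta)$ for \emph{all} $\beta\leq\gamma$, so that $A=\bigcup_\gamma[f_\gamma]_U$ is a club in $j_U(\theta)$ whose initial segments all lie in the ultrapower --- no recursion at all. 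Freshness then follows because a representing function $f$ for $A$ would, at a single coordinate $\xi$ found by pigeonhole (using $\cof{\theta}=\theta>\delta$) and upgraded by clause (iv), thread the $\xi$-th level of the indexed sequence, contradicting clause (vi). The missing idea in your proposal is exactly this passage from $\square(\theta)$ to $\square^{\mathrm{ind}}(\theta,\delta)$ plus the union-of-classes construction, i.e.\ Theorem \ref{theorem:IndSquareFresh}.

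For part (2) your target $\lambda=j_U(\kappa^+)$ does have cofinality $\kappa^+$, and in fact, once the indexed-square machinery is in hand, your route would succeed there directly: a $\square_\kappa$-sequence is by definition a $\square(\kappa^+)$-sequence, so Theorem \ref{theorem:IndSquareFresh} applied to the regular cardinal $\kappa^+$ gives a fresh club in $j_U(\kappa^+)$ --- note this would not use $\SCH$ or the strong limit hypothesis at all, which signals that your explanation of their role (``permitting the length-$\kappa^+$ recursion to diagonalize'') does not correspond to an actual argument. The paper's proof of part (ii) is genuinely different: it sets $\theta=\kappa^+$ and $\lambda=(\theta^+)^{\Ult{\VV}{U}}$, an ordinal strictly between $\kappa^+$ and $j_U(\kappa)$, uses the cardinal arithmetic ($\betrag{j_U(\kappa)}\leq\kappa^\delta=\kappa^+$, $[\xi\mapsto\kappa_\xi^+]_U=\kappa^+$) to verify $\cof{\lambda}=\kappa^+$, and then invokes Theorem \ref{theorem:FreshLimitOrdinalsNotCardinals}, which builds from the $\square_\kappa$-sequence a club in $\lambda$ of order type $\kappa^+$; freshness there comes from the mismatch between this order type and the regularity of $\lambda$ in $\Ult{\VV}{U}$, not from non-threadability, and this is where the $\SCH$/strong-limit hypotheses do real work.
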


If $U$ is a normal ultrafilter on a measurable cardinal $\delta$ with the property that the statements (i) and (ii) listed in \ref{theorem:sakai} are equivalent for every limit ordinal $\lambda$, then the first part of the above theorem shows that $\kappa=\delta^{++}$ is a countably closed\footnote{Remember that a cardinal $\kappa$ is \emph{countably closed} if $\mu^\omega<\kappa$ holds for all cardinals $\mu<\kappa$.} regular cardinal that is greater than $\max\{2^{\aleph_0},\aleph_3\}$ and has the property that there are no $\square(\kappa)$- and no $\square_\kappa$-sequences. 
 By {\cite[Theorem 5.6]{SchimmerlingCoherentSeq}}, the existence of such a cardinal implies \emph{Projective Determinacy}. 
 In addition, {\cite[Theorem 0.1]{zbMATH05532622}} derives the existence of a sharp for a proper class model with a proper class of strong cardinals and a proper class of Woodin cardinals from the existence of such a cardinal. 
 Moreover, note that the results of \cite{zbMATH05051863} show that the existence of a singular strong limit cardinal $\kappa$ with the property that there are no $\square_\kappa$-sequences implies that $\mathrm{AD}$ holds in $\LL(\RRR)$, and even stronger consequences of this assumptions are given by the results of \cite{doi:10.1142/S0219061314500032}. 
  Finally, note that work of Gitik and Mitchell (see \cite{zbMATH00429348} and \cite{zbMATH00034081}) shows that a failure of $\SCH$ at a singular strong limit cardinal $\kappa$ of uncountable cofinality implies that $\kappa$ is a measurable cardinal of high Mitchell order in a canonical inner model.

 Finally, our techniques also allow us to determine the exact consistency strength of the existence of a measurable ultrapower that has the property that no unbounded subsets of the double successor of the corresponding  measurable cardinal are fresh over it. 
 This result is motivated by results of Cummings in \cite{MR1217188} that determine the exact consistency strength of the existence of a measurable ultrapower that contains the power set of the successor of the corresponding  measurable cardinal. 
 In our setting, Cummings' results can be rephrased in the following way:

 \begin{theorem}
  The following statements are equiconsistent over the theory $\ZFC$: 
  \begin{enumerate}
   \item There exists a $(\delta+2)$-strong cardinal $\delta$. 
  
   \item There exists a normal ultrafilter $U$ on a measurable cardinal $\delta$ with the property that no unbounded subset of $\delta^+$ is fresh over $\Ult{\VV}{U}$. 
  \end{enumerate}
 \end{theorem}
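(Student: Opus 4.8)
The plan is to reduce statement (ii) to the closure property studied by Cummings and then invoke his equiconsistency result directly. The crucial observation is that, for a normal ultrafilter $U$ on a measurable cardinal $\delta$, statement (ii) is equivalent to the assertion that $\POT{\delta^+}\subseteq\Ult{\VV}{U}$.

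First I would establish this equivalence using the $\delta$-closure of the ultrapower. Since $\Ult{\VV}{U}$ is closed under sequences of length $\delta$, it is closed under all sequences of length at most $\delta$, and hence $\POT{\alpha}\subseteq\Ult{\VV}{U}$ holds for every ordinal $\alpha<\delta^+$: every such $\alpha$ has cardinality at most $\delta$, so each of its subsets is coded by a sequence of ordinals of length at most $\delta$ below $j_U(\delta)$, and this sequence lies in $\Ult{\VV}{U}$ by closure. Consequently, for any $A\subseteq\delta^+$ and any $\alpha<\delta^+$ we automatically have $A\cap\alpha\in\Ult{\VV}{U}$, and in particular every bounded subset of $\delta^+$ is an element of $\Ult{\VV}{U}$. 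This means that a subset $A$ of $\delta^+$ is fresh over $\Ult{\VV}{U}$ precisely when $A$ is unbounded and $A\notin\Ult{\VV}{U}$. Therefore no unbounded subset of $\delta^+$ is fresh over $\Ult{\VV}{U}$ exactly when every subset of $\delta^+$ lies in $\Ult{\VV}{U}$, i.e., when $\POT{\delta^+}\subseteq\Ult{\VV}{U}$.

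With this equivalence in hand, the theorem becomes a direct restatement of the results of Cummings in \cite{MR1217188}, which determine that the existence of a $(\delta+2)$-strong cardinal $\delta$ is equiconsistent over $\ZFC$ with the existence of a normal ultrafilter $U$ on a measurable cardinal $\delta$ satisfying $\POT{\delta^+}\subseteq\Ult{\VV}{U}$. Both halves are carried out there: the forcing direction, which produces a model of the closure property from a $(\delta+2)$-strong cardinal, and the core-model direction, which extracts the consistency of a $(\delta+2)$-strong cardinal from a model where the closure property holds.

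The genuinely hard content---the forcing construction and the inner-model lower bound---is thus already established in \cite{MR1217188}, so the new contribution is confined to the elementary closure argument of the second paragraph. Accordingly, the only obstacle I anticipate is expository: one must carefully match the precise phrasing of Cummings' theorem to the formulation $\POT{\delta^+}\subseteq\Ult{\VV}{U}$ and verify that no auxiliary hypotheses (such as an instance of the $\GCH$ at $\delta$, which is in any case harmless here) are silently invoked in his argument.
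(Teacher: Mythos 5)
Your proposal is correct and follows essentially the same route as the paper's own proof: use closure of $\Ult{\VV}{U}$ under $\delta$-sequences to show that statement (ii) is equivalent to $\POT{\delta^+}\subseteq\Ult{\VV}{U}$, and then invoke Cummings' two theorems from \cite{MR1217188} for both directions of the equiconsistency. The only difference is expository---you spell out the equivalence in both directions up front, whereas the paper unfolds each direction inside the corresponding half of the argument.
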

 
 \begin{proof}
   In one direction, {\cite[Theorem 1]{MR1217188}} shows that, starting with a model of $\ZFC$ containing a $(\delta+2)$-strong cardinal $\delta$, it is possible to construct  a model in which there exists a normal ultrafilter $U$ on $\delta$ satisfying $\POT{\delta^+}\subseteq\Ult{\VV}{U}$. In particular, no subset of $\delta^+$ is fresh over $\Ult{\VV}{U}$ in this model. 
 In the other direction, if $U$ is a normal ultrafilter on a measurable cardinal $\delta$ with the property that no unbounded subset of $\delta^+$ is fresh over $\Ult{\VV}{U}$, then the closure of  $\Ult{\VV}{U}$ under $\kappa$-sequences implies that  $\POT{\delta^+}\subseteq\Ult{\VV}{U}$ holds and hence {\cite[Theorem 2]{MR1217188}} yields an inner model with a $(\delta+2)$-strong cardinal $\delta$.  
 \end{proof}

The following theorem determines the exact consistency of the corresponding statement for double successors of measurable cardinals.

\begin{theorem}\label{theorem:ConsStrength}
 The following statements are equiconsistent over the theory $\ZFC$: 
 \begin{enumerate}
  \item There exists a weakly compact cardinal above a measurable cardinal. 
  
  \item There exists a normal ultrafilter $U$ on a measurable cardinal $\delta$ with the property that no unbounded subset of $\delta^{++}$ is fresh over $\Ult{\VV}{U}$. 
  %
 \end{enumerate}
\end{theorem}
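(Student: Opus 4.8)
The plan is to prove the two consistency implications separately, with a forcing construction for $(1)\Rightarrow(2)$ and a core model argument for $(2)\Rightarrow(1)$. The guiding intuition is that $\delta^{++}$ is always a successor, so Proposition \ref{proposition:MinNonFresh}(1) can never rule out fresh subsets there; what rules them out must be a genuinely weak-compactness-flavoured reflection property of $\delta^{++}$ that does not come for free, exactly mirroring how Theorem \ref{theorem:sakai} needed a strongly compact to kill fresh subsets at higher cofinalities, but now only at the single regular cardinal $\delta^{++}$.

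For the direction from $(1)$ to $(2)$, I would start in a model $\WW$ with a measurable $\delta$, a weakly compact $\kappa>\delta$, and the $\GCH$, and collapse $\kappa$ to become $\delta^{++}$ by a $\leq\delta$-closed, $\kappa$-c.c.\ forcing $\PPP$ of the $\Col{(\delta^+)}{{<}\kappa}$-type (a Mitchell-style version, since the reflection below will require the quotient of $\PPP$ past $\kappa$ to carry enough closure/approximation, which the plain L\'evy collapse does not provide). Because $\PPP$ adds no subsets of $\delta$, the normal ultrafilter $U$ survives into $\VV=\WW[G]$ as a normal ultrafilter on the still-measurable $\delta$, and the $\GCH$ at $\delta$ and $\delta^+$ persists so that $j_U(\delta)<\delta^{++}=\kappa$. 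The key lemma is then that \emph{no unbounded subset of $\delta^{++}$ is fresh over $\Ult{\VV}{U}$}. To prove it I would assume, towards a contradiction, that $A\subseteq\kappa$ is fresh over $M=\Ult{\VV}{U}$, capture a $\PPP$-name for $A$, the measure $U$, and a name for $M$ inside a $\WW$-internal $\kappa$-model $N$, and apply the weak compactness of $\kappa$ in $\WW$ to obtain an elementary $j\colon N\to N'$ with $\crit{j}=\kappa$. Lifting $j$ through the collapse to $\hat\jmath\colon N[G]\to N'[H]$, elementarity applied to the statement ``$\hat\jmath(A)\cap\beta\in\hat\jmath(M)$ for all $\beta<\hat\jmath(\kappa)$'' at the instance $\beta=\kappa$ yields $\hat\jmath(A)\cap\kappa=A\in\hat\jmath(M)$; since $\hat\jmath$ fixes $U$ and is the identity below $\kappa$, the ultrapowers $M$ and $\hat\jmath(M)$ have the same subsets of $\kappa$, forcing $A\in M$, contrary to freshness.

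For the direction from $(2)$ to $(1)$, I would argue that the absence of fresh subsets of $\delta^{++}$ has substantial consistency strength that is housed in the core model. First, I would show that $(2)$ implies there is no $\square(\delta^{++})$-sequence: running the construction underlying Theorem \ref{theorem:StrengthSakai}(1) at the regular cardinal $\theta=\delta^{++}$ itself builds a fresh subset of $\delta^{++}$ (rather than of some larger ordinal of cofinality $\delta^{++}$) directly from such a sequence, so a $\square(\delta^{++})$-sequence would contradict $(2)$. Next, since $\delta$ is measurable in $\VV$, the core model $K$ exists and carries a measurable cardinal below $\delta$, and $(\delta^{++})^{\VV}$, being regular in $\VV$, remains a regular cardinal of $K\subseteq\VV$. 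If $(\delta^{++})^{\VV}$ were not weakly compact in $K$, then by the core-model square dichotomy for Jensen-style extender models without subcompact cardinals (cf.\ \cite{MR2081183} and \cite{SchimmerlingCoherentSeq}) $K$ would satisfy $\square(\delta^{++})$ via a sequence that remains non-threadable in $\VV$, contradicting the previous step. Hence $\delta^{++}$ is weakly compact in $K$, and $K$ is an inner model with a weakly compact cardinal above a measurable cardinal, yielding the consistency of $(1)$.

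The main obstacle is the key lemma of the forward direction, namely propagating the weak compactness of $\kappa$ past the collapse to eliminate fresh subsets at $\delta^{++}$. The difficulty is genuinely the lifting step $\hat\jmath\colon N[G]\to N'[H]$: constructing the generic $H$ requires controlling the quotient of the collapse above $\kappa$, whose closure is only $\leq\delta$ while one must meet $\kappa$-many dense sets, so a naive diagonalization fails and one is forced into the Mitchell-style setup and a master-condition/branch-lemma analysis. A secondary, more bookkeeping-level point to verify carefully is the agreement $\hat\jmath(M)\cap\POT{\kappa}=M\cap\POT{\kappa}$, which must be extracted from $\crit{\hat\jmath}=\kappa$, the fixing of $U$, and the fact that the relevant functions representing initial segments of $A$ already live in $N[G]$.
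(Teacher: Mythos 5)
Your overall architecture (collapse a weakly compact to $\delta^{++}$ and reflect via a lifted embedding in one direction; an inner-model square dichotomy in the other) is the same as the paper's, but both directions have genuine gaps at exactly the points you flag as ``to be verified.''

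In the direction $(1)\Rightarrow(2)$, the obstacle you identify --- that the quotient of the collapse past $\kappa$ is only ${<}\delta^+$-closed while one must meet $\kappa$-many dense sets --- is real, but your proposed fix (Mitchell-style forcing plus master conditions) neither solves it nor is needed. The paper uses the plain L\'evy collapse $\Col{\mu}{{<}\kappa}$ with $\mu=\delta^+$ (Theorem \ref{theorem:CollapseWCnefresh}): one never constructs the generic $H$ for $j(\PPP)$ inside $\VV$. Instead one \emph{forces} with $\Col{\mu}{[\kappa,j(\kappa))}$ over $\VV$ to obtain $H_0$, lifts $j$ to $j_*\colon M[G]\to N[H]$ inside $\VV[H_0]$ (immediate, since every condition of $\Col{\mu}{{<}\kappa}$ is fixed by $j$, so $j[G]\subseteq H$), and concludes there that $A$ is represented by a function $f\in N[H]$. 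The point is that this does \emph{not} yet contradict freshness of $A$ in $\VV$, because $f$ need not lie in $\VV$ --- and this is precisely where your endgame breaks: your step ``$A\in\hat\jmath(M)$, hence $A\in M$'' fails, since $\hat\jmath(M)$ is computed with functions from $N'[H]\nsubseteq\VV$, so $\hat\jmath(M)$ and $M$ need \emph{not} have the same subsets of $\kappa$. The missing idea is the paper's Lemma \ref{lemma:ClosedPreservesFreshness}: if $A$ is fresh over $\Ult{\VV}{U}$ and $\cof{\lambda}>\delta$, then every $(\delta+1)$-strategically closed forcing forces $\check{A}\notin\Ult{\VV}{\check{U}}$. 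This preservation lemma (proved by a strategic-closure diagonalization against names, playing the role of the ``branch lemma'' you gesture at) is what turns the contradiction obtained in $\VV[H_0]$ into a contradiction about $\VV$. A smaller but related defect: $M=\Ult{\VV}{U}$ is a proper class, so you cannot put ``a name for $M$'' into a $\kappa$-model or apply $\hat\jmath$ to it; the paper instead reflects the set-sized function $\gamma\mapsto\dot{F}^G(\gamma)$ assigning to each $\gamma<\kappa$ a representative of $A\cap\gamma$.

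In the direction $(2)\Rightarrow(1)$, your first step is wrong as stated: Theorem \ref{theorem:IndSquareFresh} (hence Theorem \ref{theorem:StrengthSakai}(i)) produces a fresh club in $j_U(\theta)$, not in $\theta$, and there is no way to run it ``at $\delta^{++}$ itself'' unless $j_U(\delta^{++})=\delta^{++}$, which by Lemma \ref{lemma:UltrapowerEmbeddingFixedPoints} requires $2^\delta=\delta^+$. That hypothesis is not free; the paper secures it by a case split that your sketch omits: if $\GCH$ fails at the measurable $\delta$, then by Gitik--Mitchell there is already an inner model with $o(\delta)=\delta^{++}$, hence with a measurable (in particular weakly compact) cardinal above a measurable, so that case is harmless, and otherwise $2^\delta=\delta^+$ and the square argument applies. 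Second, your appeal to ``the core model $K$'' imports unproved claims --- that $K$ has a measurable below $\delta$, and that a $\square(\delta^{++})$-sequence of $K$ remains non-threadable in $\VV$; neither is covered by the results you cite. The paper avoids both issues by working in $\LL[U]$: there $\delta$ is trivially measurable, Jensen's fine-structural construction yields (a tail of) a $\square(\delta^{++})$-sequence when $\delta^{++}$ is not weakly compact in $\LL[U]$, and the Todor\v{c}evi\'c--Jensen argument shows that this particular sequence remains non-threadable in $\VV$, which is exactly what feeds into Theorem \ref{theorem:IndSquareFresh}.
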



\section{Simple closure and non-closure properties}\label{section:2}

 In this section, we prove the two propositions stated in the introduction.

\begin{proof}[Proof of Proposition \ref{proposition:MinNonFresh}]
 (i) If $\cof{\lambda}\leq\delta$, then the desired statement follows directly from the closure of $\Ult{\VV}{U}$ under $\delta$-sequences. Hence, we may assume that $\kappa=\cof{\lambda}$ is a weakly compact cardinal greater than $\delta$. Pick a cofinal sequence $\seq{\gamma_\alpha}{\alpha<\kappa}$ in $\lambda$ and fix an unbounded subset $A$ of $\lambda$ such that $A\cap\gamma\in\Ult{\VV}{U}$ for all $\gamma<\lambda$. Given $\alpha<\kappa$, fix functions $f_\alpha$ and $g_\alpha$ with domain $\delta$ such that $[f_\alpha]_U=\gamma_\alpha$ and $[g_\alpha]_U = A\cap\gamma_\alpha$ (recall that we are identifying $\Ult{\VV}{U}$ with its transitive collapse). 
 Let $\map{c}{[\kappa]^2}{U}$ denote the unique function with the property that $$c(\{\alpha,\beta\}) ~ = ~ \Set{\xi<\delta}{f_\alpha(\xi)<f_\beta(\xi), ~ g_\alpha(\xi)=f_\alpha(\xi)\cap g_\beta(\xi)}$$ holds for all $\alpha<\beta<\kappa$. 
  In this situation, since $\kappa>\delta$ is weakly compact, we know that $\betrag{U}=2^\delta<\kappa$ and hence the weak compactness of $\kappa$ yields 
  %
 an unbounded subset $H$ of $\kappa$ and an element $X$ of $U$ with the property that $c[H]^2=\{X\}$. Pick a function $g$ with domain $\delta$ and the property that $g(\xi)=\bigcup\Set{g_\alpha(\xi)}{\alpha\in H}$ holds for all $\xi\in X$. 
  This construction ensures that $[g]_U\cap\gamma_\alpha=[g_\alpha]_U$ holds for every $\alpha\in H$ and we can conclude that $[g]_U=A$. In particular, the set $A$ is not fresh over $\Ult{\VV}{U}$.

  (ii) Fix a ${<}(2^\delta)^+$-closed ultrafilter $F$ on $\cof{\lambda}$ that contains all cobounded subsets of $\cof{\lambda}$ and assume, towards a contradiction, that $A$ is an unbounded subset of $\lambda$ that is fresh over $\Ult{\VV}{U}$. 
  Pick a strictly increasing sequence $\seq{\gamma_\eta}{\eta<\cof{\lambda}}$ that is cofinal in $\lambda$. Given $\eta<\cof{\lambda}$, fix functions $f_\eta$ and $g_\eta$  with domain $\delta$ that satisfy $[f_\eta]_U=\gamma_\eta$ and $[g_\eta]_U=A\cap\gamma_\eta$. Moreover, given $\eta<\cof{\lambda}$ and $X\in U$, we set $$A_{\eta,X} ~ = ~ \Set{\zeta\in(\eta,\cof{\lambda})}{X=\Set{\xi<\delta}{f_\eta(\xi)<f_\zeta(\xi), ~ g_\eta(\xi)=g_\zeta(\xi)\cap f_\eta(\xi)}}.$$
  Then $$\bigcup\Set{A_{\eta,X}}{X\in U} ~ = ~ (\eta,\cof{\lambda})$$ holds for every $\eta<\cof{\lambda}$. By our assumptions on $F$, there exists a sequence $\seq{X_\eta}{\eta<\cof{\lambda}}$ of elements of $U$ with the property that $A_{\eta,X_\eta}\in F$ holds for all $\eta<\cof{\lambda}$. Furthermore, as $\cof{\lambda} > 2^\delta$, there is a set $X_*\in U$ and an unbounded subset $E$ of $\cof{\lambda}$ with $X_\eta=X_*$ for all $\eta\in E$. By construction, we now have $f_\eta(\xi)<f_\zeta(\xi)$ and $g_\eta(\xi)=g_\zeta(\xi)\cap f_\eta(\xi)$ for all $\xi\in X_*$ and all $\eta,\zeta\in E$ with $\eta<\zeta$. Pick a function $g$ with domain $\delta$ and $$g(\xi) ~ = ~ \bigcup\Set{g_\eta(\xi)}{\eta\in E}$$ for all $\xi\in X_*$. Then $[g]_U\cap\gamma_\eta=[g_\eta]_U=A\cap\gamma_\eta$ for all $\eta\in E$ and hence we can conclude that $[g]_U = A\in\Ult{\VV}{U}$, a contradiction. The second part of the statement follows directly from the first part and the \emph{filter extension property} of strongly compact cardinals (see {\cite[Proposition 4.1]{MR1994835}}).  
\end{proof}

\begin{proof}[Proof of Proposition \ref{proposition:MinFresh}]
 (i) If $\kappa>\delta$ is the minimal cardinal with $\POT{\kappa}\nsubseteq\Ult{\VV}{U}$, then every element of  $\POT{\kappa}\setminus\Ult{\VV}{U}$ is unbounded in $\kappa$ and fresh over $\Ult{\VV}{U}$. 
 
 (ii) Fix a limit ordinal $\lambda$ with $\cof{\lambda}^{\Ult{\VV}{U}}=j_U(\delta^+)$ and pick a strictly increasing, cofinal function $\map{c}{j_U(\delta^+)}{\lambda}$ in $\Ult{\VV}{U}$. 
  Since $j_U[\delta^+]$ is a cofinal subset of $j_U(\delta^+)$ of order-type $\delta^+$, we know that $(c\circ j_U)[\delta^+]$ is a cofinal subset of $\lambda$ of order-type $\delta^+$. In particular, the closure of $\Ult{\VV}{U}$ under $\delta$-sequences implies that every proper initial segment of $(c\circ j_U)[\delta^+]$ is an element of $\Ult{\VV}{U}$. Finally, since {\cite[Proposition 22.4]{MR1994835}} shows that $j_U[\delta^+]\notin\Ult{\VV}{U}$, we can conclude that the set $(c\circ j_U)[\delta^+]$ is fresh over $\Ult{\VV}{U}$.  
 
  (iii) First, assume that $\cof{\lambda}^{\Ult{\VV}{U}}=\delta^+$. Since $2^\delta = \delta^+$ and $U \notin \Ult{\VV}{U}$, we have $\POT{\delta^+}\nsubseteq\Ult{\VV}{U}$, and we can use (i) to find an unbounded subset $A$ of $\delta^+$ that is fresh over $\Ult{\VV}{U}$. Let $\seq{\gamma_\alpha}{\alpha<\delta^+}$ be the monotone enumeration of an unbounded subset of $\lambda$ of order-type $\delta^+$ in $\Ult{\VV}{U}$. Set $B=\Set{\gamma_\alpha}{\alpha\in A}$. Then $B$ is unbounded in $\lambda$ and it is easy to see that $B$ is fresh over $\Ult{\VV}{U}$. 
 
 Now, assume that $\cof{\lambda}^{\Ult{\VV}{U}}>\delta^+$ and fix an unbounded subset $A$ of $\lambda$ of order-type $\delta^+$. Then the closure of $\Ult{\VV}{U}$ under $\delta$-sequences implies that $A$ is fresh over $\Ult{\VV}{U}$. 
\end{proof}

Note that, in the situation of Proposition \ref{proposition:MinFresh}, we have $\cof{\lambda}=\delta^+$ for every limit ordinal $\lambda$ with $\cof{\lambda}^{\Ult{\VV}{U}}=j_U(\delta^+)$. 
 In particular, if $\kappa$ is a strong limit cardinal of cofinality $\delta^+$, then the fact that $j_U(\kappa)=\kappa$ holds allows us to use the second part of the above proposition to conclude that there is an unbounded subset of $\kappa$ that is fresh over $\Ult{\VV}{U}$.  
 Moreover, the results of Cummings in \cite{MR1217188} discussed in the first section show that the cardinal arithmetic assumption in the third part of the proposition can, in general, not be omitted.


\section{Fresh subsets at image points of ultrapower embeddings}

In this section, we will use a modified square principle introduced in \cite{MR3694332} to show that the existence of a $\square(\kappa)$-sequence allows us to construct a fresh subset of $j_U(\kappa)$. 
 The principle defined in the next definition is a variation of the indexed square principles studied in \cite{MR1838355} and \cite{MR1942302}. 

\begin{definition}[Lambie-Hanson]\label{ind_square_def}
	Let $\delta < \kappa$ be infinite regular cardinals. A $\square^{\mathrm{ind}}(\kappa, \delta)$-sequence is a matrix $$\seq{C_{\gamma,\xi}}{\gamma < \kappa, ~ i(\gamma) \leq \xi < \delta}$$  satisfying the following statements: 
  \begin{enumerate}[ref=(\roman{enumi})]
		\item If $\gamma \in \Lim\cap\kappa$, then $i(\gamma) < \delta$. 

		\item If $\gamma \in\Lim\cap\kappa$ and $i(\gamma) \leq \xi < \delta$, then $C_{\gamma,\xi}$ is a closed unbounded subset of  $\gamma$. 

		\item If $\gamma\in\Lim\cap\kappa$ and $i(\gamma) \leq \xi_0 < \xi_1 < \delta$, then $C_{\gamma,\xi_0} \subseteq C_{\gamma,\xi_1}$. 

		\item If $\beta,\gamma\in\Lim\cap\kappa$ and $i(\gamma) \leq \xi < \delta$ with $\beta \in \Lim(C_{\gamma,\xi})$, then  $\xi\geq i(\beta)$ and $ C_{\beta,\xi}=C_{\gamma,\xi} \cap \beta$. 

		\item If $\beta,\gamma\in\Lim\cap\kappa$ with $\beta < \gamma$, then there is an $i(\gamma)\leq \xi < \delta$ such that $\beta \in \Lim(C_{\gamma,\xi})$.

    \item \label{it:(vi)} There is no closed unbounded subset  $C$ of $\kappa$ with the property that, for all $\gamma \in \Lim(C)$, there is $\xi < \delta$ such that $C_{\gamma,\xi}=C \cap \gamma$ holds.
	\end{enumerate}
\end{definition}

 The main result of \cite{MR3893286} now shows that for all infinite regular cardinals $\delta<\kappa$, the existence of a $\square(\kappa)$-sequence implies the existence of a $\square^{\mathrm{ind}}(\kappa, \delta)$-sequence. 
 The proof of the following result is based on this implication.

\begin{theorem}\label{theorem:IndSquareFresh}
  Let $U$ be a normal ultrafilter on a measurable cardinal $\delta$ and let $\kappa>\delta$ be a regular cardinal. 
  If  there exists a $\square(\kappa)$-sequence, then there is a closed unbounded subset of $j_U(\kappa)$ that is fresh over $\Ult{\VV}{U}$. 
\end{theorem}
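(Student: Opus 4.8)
The plan is to push a $\square^{\mathrm{ind}}(\kappa,\delta)$-sequence through the ultrapower embedding and read off a fresh club from its column at the critical level $\delta$. First I would invoke the cited result of \cite{MR3893286}: since $\delta<\kappa$ are regular and a $\square(\kappa)$-sequence exists, fix a $\square^{\mathrm{ind}}(\kappa,\delta)$-sequence $\vec C=\seq{C_{\gamma,\xi}}{\gamma<\kappa,\,i(\gamma)\le\xi<\delta}$. Applying the elementary embedding $j_U$ yields, inside $\Ult{\VV}{U}$, a $\square^{\mathrm{ind}}(j_U(\kappa),j_U(\delta))$-sequence $\vec D=j_U(\vec C)=\seq{D_{\gamma,\xi}}{\gamma<j_U(\kappa),\,i'(\gamma)\le\xi<j_U(\delta)}$ with $i'=j_U(i)$. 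The two facts that make the critical point usable are that $\delta=\crit{j_U}<j_U(\delta)$ and that $j_U(\xi)=\xi$ for all $\xi<\delta$; moreover, since $\kappa>\delta$ is regular, every $\map{f}{\delta}{\kappa}$ is bounded, so $j_U[\kappa]$ is cofinal in $j_U(\kappa)$ of order-type $\kappa$. Note also that $i'(j_U(\bar\gamma))=j_U(i(\bar\gamma))=i(\bar\gamma)<\delta$ for limit $\bar\gamma<\kappa$, so the entry $D_{j_U(\bar\gamma),\delta}$ is defined and is a club in $j_U(\bar\gamma)$.

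The heart of the argument is the following coherence claim at the single level $\delta$: for all limit ordinals $\bar\beta<\bar\gamma<\kappa$ one has $j_U(\bar\beta)\in\Lim(D_{j_U(\bar\gamma),\delta})$, and hence $D_{j_U(\bar\beta),\delta}=D_{j_U(\bar\gamma),\delta}\cap j_U(\bar\beta)$ by clause (iv) of Definition \ref{ind_square_def}. To see this, apply clause (v) in $\VV$ to obtain some $i(\bar\gamma)\le\xi<\delta$ with $\bar\beta\in\Lim(C_{\bar\gamma,\xi})$; applying $j_U$ and using $j_U(\xi)=\xi$ gives $j_U(\bar\beta)\in\Lim(D_{j_U(\bar\gamma),\xi})$, and then the monotonicity clause (iii) promotes this to $j_U(\bar\beta)\in\Lim(D_{j_U(\bar\gamma),\delta})$. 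With coherence in hand I would set
$$C ~ = ~ \bigcup\Set{D_{j_U(\bar\gamma),\delta}}{\bar\gamma\in\Lim\cap\kappa},$$
and check that $C$ is a closed unbounded subset of $j_U(\kappa)$: unboundedness follows from the cofinality of $j_U[\kappa]$, while coherence gives $C\cap j_U(\bar\gamma)=D_{j_U(\bar\gamma),\delta}$ for every limit $\bar\gamma<\kappa$, from which closure of $C$ is immediate.

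It then remains to verify freshness over $\Ult{\VV}{U}$. For the initial segments, given $\alpha<j_U(\kappa)$ I would pick a limit $\bar\gamma<\kappa$ with $\alpha<j_U(\bar\gamma)$ and observe that $C\cap\alpha=D_{j_U(\bar\gamma),\delta}\cap\alpha\in\Ult{\VV}{U}$, since both $\vec D$ and $\alpha$ lie in the ultrapower. For non-membership, suppose towards a contradiction that $C\in\Ult{\VV}{U}$. Using $C\cap j_U(\bar\gamma)=D_{j_U(\bar\gamma),\delta}$ together with clause (iv), a short argument shows that every $\gamma\in\Lim(C)$ satisfies $C\cap\gamma=D_{\gamma,\delta}$, i.e.\ $C$ threads $\vec D$ at level $\delta$; this directly contradicts the non-threading clause \ref{it:(vi)}, which holds for $\vec D$ in $\Ult{\VV}{U}$ by elementarity. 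Hence $C$ is fresh over $\Ult{\VV}{U}$. The main obstacle is the coherence claim of the second paragraph: it is precisely here that the indexed square is needed rather than a plain $\square(\kappa)$-sequence, since the point is that all the witnesses supplied by clause (v) carry indices below $\delta=\crit{j_U}$ and are therefore fixed by $j_U$, so they collectively pin down the single threading level $\delta$ in the image matrix, while clause \ref{it:(vi)} simultaneously forbids that thread from living inside the ultrapower.
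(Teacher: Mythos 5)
Your proof is correct, and it builds exactly the same club as the paper's proof does: by normality of $U$ we have $[\mathrm{id}]_U=\delta$, so for each limit $\bar\gamma<\kappa$ your matrix entry $D_{j_U(\bar\gamma),\delta}$ is precisely the equivalence class $[f_{\bar\gamma}]_U$ of the function $\xi\mapsto C_{\bar\gamma,\xi}$ that the paper uses, and your set $C$ coincides with the paper's $A=\bigcup\Set{[f_\gamma]_U}{\gamma\in\Lim\cap\kappa}$; likewise, your coherence claim is the content that the paper extracts from {\L}os' Theorem. The genuine difference is in how non-membership in $\Ult{\VV}{U}$ is derived. The paper argues on the $\VV$-side: writing $A=[f]_U$ with each $f(\xi)$ a club in $\kappa$, it uses {\L}os plus a pigeonhole argument (there are $\kappa>\delta$ many limit ordinals $\gamma$ but only $\delta$ many indices $\xi$) to find a single $\xi<\delta$ such that $f(\xi)\cap\gamma=C_{\gamma,\xi}$ for unboundedly many $\gamma<\kappa$, and then clause (iv) turns $f(\xi)$ into a thread of the original sequence at level $\xi$, contradicting clause (vi) in $\VV$. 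You instead stay inside the ultrapower: you show that the hypothetical $C\in\Ult{\VV}{U}$ threads the image sequence $j_U(\vec C)$ at the single level $\delta$, contradicting clause (vi) for $j_U(\vec C)$, which holds in $\Ult{\VV}{U}$ by elementarity. Your route avoids the pigeonhole step and the representing function $f$ altogether, which makes the final contradiction shorter and arguably more transparent; the paper's route keeps all the combinatorics in $\VV$, where the original $\square^{\mathrm{ind}}(\kappa,\delta)$-sequence lives, at the cost of the extra counting argument. Both arguments turn on the same key point, which you isolate explicitly: the indices $i(\gamma)$ all lie below $\delta=\crit{j_U}$ and are therefore fixed by $j_U$, which is what makes $\delta$ itself a legitimate common threading level for the image matrix.
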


\begin{proof}
 By {\cite[Theorem 3.4]{MR3893286}}, our assumptions allow us to fix a $\square^{\mathrm{ind}}(\kappa, \delta)$-sequence $$\seq{C_{\gamma,\xi}}{\gamma < \kappa, ~ i(\gamma) \leq \xi < \delta}.$$  
  Given $\gamma\in\Lim\cap\kappa$, let $\map{f_\gamma}{\delta}{\POT{\gamma}}$ denote the unique function with $f_{\gamma}(\xi)=\emptyset$ for all $\xi<i(\gamma)$ and $f_\gamma(\xi)=C_{\gamma,\xi}$ for all $i(\gamma)\leq\xi<\delta$. 
  In this situation, {\L}os' Theorem directly implies that for all $\beta,\gamma\in\Lim\cap\kappa$ with $\beta\leq\gamma$, the set $[f_\gamma]_U$ is closed unbounded in $j_U(\gamma)$ and $[f_\beta]_U=[f_\gamma]_U\cap j_U(\beta)$ holds. 
  Define $$A ~ = ~ \bigcup\Set{[f_\gamma]_U}{\gamma \in \Lim \cap \kappa}.$$ By our assumptions on $\kappa$, we know that $j_U(\kappa)=\sup(j_U[\kappa])$ and therefore $A$ is a closed unbounded subset of $j_U(\kappa)$ with $A\cap j_U(\gamma)=[f_\gamma]_U$ for all $\gamma\in \Lim \cap \kappa$. 
 
 Assume, towards a contradiction, that $A$ is an element of $\Ult{\VV}{U}$. Then there is $\map{f}{\delta}{\POT{\kappa}}$ with $[f]_U=A$ and the property that $f(\xi)$ is a closed unbounded subset of $\kappa$ for all $\xi<\delta$. 
 Since we have $\Set{\xi<\delta}{f_\gamma(\xi)=f(\gamma)\cap\gamma\neq\emptyset}\in U$ for all $\gamma \in \Lim \cap \kappa$, we can find $\xi<\delta$ with the property that $\xi\geq i(\gamma)$ and $f(\xi)\cap\gamma=C_{\gamma,\xi}$ holds for unboundedly many $\gamma$ below $\kappa$. 
  Pick $\beta\in\Lim(f(\xi))$ and $\beta<\gamma<\kappa$ with $\xi\geq i(\gamma)$ and $f(\xi)\cap\gamma=C_{\gamma,\xi}$. Then $\beta\in\Lim(C_{\gamma,\xi})$ and this implies that $\xi\geq i(\beta)$ and $C_{\beta,\xi}=C_{\gamma,\xi}\cap\beta=f(\xi)\cap\beta$. 
 These computations show that there is a closed unbounded subset $C$ of $\kappa$ and $\xi<\delta$ such that $\xi\geq i(\beta)$ and $C\cap\beta=C_{\beta,\xi}$ holds for all $\beta\in\Lim(C)$. But this contradicts \ref{it:(vi)} in Definition \ref{ind_square_def}. 
\end{proof}


\section{Fresh subsets of successors of singular cardinals}

We now aim to construct fresh subsets of cardinals that are not contained in the image of the corresponding ultrapower embedding, e.g. successors of singular cardinals whose cofinality is equal to the relevant measurable cardinal. 
Our arguments will rely on two standard observations about measurable ultrapowers and $\square_\kappa$-sequences that we present first. 
A proof of the following lemma is contained in the proof of {\cite[Lemma 1.3]{MR3845129}}.

\begin{lemma}\label{lemma:UltrapowerEmbeddingFixedPoints}
 Let $U$ be a normal ultrafilter on a measurable cardinal $\delta$. If $\nu>\delta$ is a cardinal with $\cof{\nu}\neq\delta$ and $\lambda^\delta<\nu$ for all $\lambda<\nu$, then $j_U(\nu)=\nu$ and $j_U(\nu^+)=\nu^+$. 
\end{lemma}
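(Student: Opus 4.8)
The plan is to reduce the statement to two successive computations: first I would pin down $j_U(\nu)$, and then bootstrap to $j_U(\nu^+)$ using that $\Ult{\VV}{U}$ is an inner model contained in $\VV$. Throughout I would rely on three standard facts about the ultrapower embedding: that $\crit{j_U}=\delta$ and hence $j_U(\beta)\geq\beta$ for every ordinal $\beta$; that every element of $j_U(\beta)$ is of the form $[f]_U$ for some $\map{f}{\delta}{\beta}$, so that $\betrag{j_U(\beta)}\leq\betrag{\beta}^\delta$; and that $\Ult{\VV}{U}$ is closed under $\delta$-sequences. I would first note that the hypothesis forces $\nu$ to be a limit cardinal, since $\lambda^+\leq\lambda^\delta<\nu$ for every cardinal $\lambda$ with $\delta\leq\lambda<\nu$. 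In particular, for every $\beta<\nu$, the bound $\betrag{j_U(\beta)}\leq\betrag{\beta}^\delta<\nu$ together with $\nu$ being a cardinal yields $j_U(\beta)<\nu$.

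First I would show $j_U(\nu)=\nu$. The inequality $j_U(\nu)\geq\nu$ is immediate. For the reverse inequality I would argue that $j_U$ is continuous at $\nu$, i.e.\ $j_U(\nu)=\sup(j_U[\nu])$, and this is precisely where the hypothesis $\cof{\nu}\neq\delta$ enters. I would split into two cases. If $\cof{\nu}>\delta$, then every function $\map{f}{\delta}{\nu}$ has bounded range, so $[f]_U<j_U(\beta)$ for $\beta=\sup(\ran{f})+1<\nu$; since each such $j_U(\beta)<\nu$, every element of $j_U(\nu)$ lies below $\nu$, giving $j_U(\nu)\leq\nu$. If $\cof{\nu}<\delta$, I would fix a cofinal map $\map{c}{\cof{\nu}}{\nu}$; since $\cof{\nu}<\crit{j_U}$, the map $j_U(c)$ is cofinal in $j_U(\nu)$ and satisfies $j_U(c)(i)=j_U(c(i))$ for all $i<\cof{\nu}$, whence $j_U(\nu)=\sup_{i<\cof{\nu}}j_U(c(i))\leq\nu$. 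Either way $j_U(\nu)=\nu$. The main obstacle here is that the argument genuinely breaks when $\cof{\nu}=\delta$: a cofinal $\map{f}{\delta}{\nu}$ then represents an ordinal $[f]_U$ with $\sup(j_U[\nu])\leq[f]_U<j_U(\nu)$, so $j_U$ is discontinuous at $\nu$ and $j_U(\nu)>\nu$. Verifying that neither of the two admissible cofinality cases secretly produces such an unbounded representative is the point that needs care.

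For $j_U(\nu^+)=\nu^+$ I would deliberately avoid any further cardinal-arithmetic estimate (which would be too lossy, since $\nu^\delta$ may well exceed $\nu$) and instead argue via inner-model absoluteness. Since $j_U(\nu)=\nu$, elementarity shows that $j_U(\nu^+)$ is the cardinal successor of $\nu$ as computed inside $\Ult{\VV}{U}$. Because $\Ult{\VV}{U}$ is a transitive model of $\ZFC$ with $\Ult{\VV}{U}\subseteq\VV$, any surjection from $\nu$ onto an ordinal $\beta$ witnessing $\Ult{\VV}{U}\models\betrag{\beta}\leq\nu$ already lies in $\VV$; hence every ordinal of $\Ult{\VV}{U}$-cardinality at most $\nu$ has $\VV$-cardinality at most $\nu$ and so lies below $(\nu^+)^\VV=\nu^+$. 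This gives $j_U(\nu^+)=(\nu^+)^{\Ult{\VV}{U}}\leq\nu^+$, and together with $j_U(\nu^+)\geq\nu^+$ I would conclude $j_U(\nu^+)=\nu^+$.
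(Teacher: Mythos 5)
Your proof is correct, and it is essentially the standard argument: the paper gives no inline proof of this lemma, deferring instead to the proof of Lemma 1.3 in the reference it cites, and your two steps are exactly the ingredients of that argument. Specifically, continuity of $j_U$ at $\nu$ in the two admissible cofinality cases combined with the bound $\betrag{j_U(\beta)}\leq\betrag{\beta}^\delta<\nu$ for $\beta<\nu$ gives $j_U(\nu)=\nu$, and then $j_U(\nu^+)=(\nu^+)^{\Ult{\VV}{U}}\leq(\nu^+)^{\VV}$ follows, as you say, because any surjection witnessing smallness of an ordinal in the transitive inner model $\Ult{\VV}{U}$ already lies in $\VV$.
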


The next lemma contains a well-known construction (see {\cite[Section 4]{zbMATH05011391}}) that shows that, in the situations relevant for our proofs, the existence of some $\square_\kappa$-sequence already implies the existence of such a sequence with certain additional structural properties.

\begin{lemma}\label{lemma:ModSquare}
 Let $\kappa$ be a singular cardinal and let $S$ be a stationary subset of $\kappa^+$. If there exists a $\square_\kappa$-sequence, then there exists a $\square_\kappa$-sequence $\seq{C_\gamma}{\gamma\in\Lim\cap\kappa^+}$ and a stationary subset $E$ of $S$ such that $\otp{C_\gamma}<\kappa$ and $\Lim(C_\gamma)\cap E=\emptyset$ for all $\gamma\in\Lim\cap\kappa^+$. 
\end{lemma}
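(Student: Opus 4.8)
### Proof strategy for Lemma \ref{lemma:ModSquare}

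The plan is to start from an arbitrary $\square_\kappa$-sequence $\seq{D_\gamma}{\gamma\in\Lim\cap\kappa^+}$ and perform the standard two-step modification used in the $\square_\kappa$ literature. The first goal is to arrange that $\otp{D_\gamma}<\kappa$ for every $\gamma$, strictly below $\kappa$ rather than merely $\leq\kappa$. Since $\kappa$ is singular, fix a strictly increasing continuous cofinal sequence $\seq{\kappa_i}{i<\cof{\kappa}}$ of cardinals below $\kappa$ with $\kappa=\sup_{i<\cof{\kappa}}\kappa_i$. The idea is to replace each $D_\gamma$ of order type exactly $\kappa$ by a shorter tail-coherent cofinal set: if $\otp{D_\gamma}=\kappa$, we intersect $D_\gamma$ with the appropriate level of the stratification or, more cleanly, thin $D_\gamma$ to those of its limit points $\beta$ with $\otp{D_\gamma\cap\beta}<\kappa$ while keeping cofinality. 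Concretely, one sets $C_\gamma$ to be a club in $\gamma$ contained in $D_\gamma$ of order type $\cof{\gamma}<\kappa$; coherence (clause (ii) of the $\square(\kappa^+)$-definition) must be preserved, so the thinning has to be defined uniformly along the coherence thread, which is the first point requiring care.

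The second and main step is to drive the set of limit points off a prescribed stationary set $S$. Here I would invoke the classical fact, essentially Fodor's lemma applied to the $\square_\kappa$-structure, that allows one to fix the coherence. Since $\kappa^+$ carries a $\square_\kappa$-sequence of order types $<\kappa$, the ordinals in $\Lim\cap\kappa^+$ are stratified by the order type $\otp{C_\gamma}$, and for each fixed order type the relevant clubs cohere. The stationary set $S$ meets one of the ``order-type classes'' stationarily; I would pass to a stationary $E\subseteq S$ on which the order types $\otp{C_\gamma}$ are suitably controlled, and then shift the indexing or the threads so that no $\gamma$ has a limit point of its club landing inside $E$. The cleanest route is to use $E$ itself as a set of ``forbidden'' points: one re-defines $C_\gamma$ by deleting from each club its intersection with a small neighborhood of points of $E$, again propagating the change coherently via clause (ii). Because $S$ is stationary, a stationary subset $E$ survives this process, giving $\Lim(C_\gamma)\cap E=\emptyset$ for all $\gamma$.

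The key obstacle throughout is maintaining \emph{coherence} while modifying the clubs: any change to $C_\gamma$ forces a compatible change to $C_\beta$ for every $\beta\in\Lim(C_\gamma)$, so the two modifications (shortening to order type $<\kappa$ and avoiding $E$) cannot be performed pointwise and independently. The standard device, which I expect to use, is to define the new sequence by recursion on $\gamma$, verifying at each stage that the thread condition $C_\beta=C_\gamma\cap\beta$ (for $\beta\in\Lim(C_\gamma)$) is inherited from the old sequence and that the non-threadability clause (the analogue of clause (iii) in the $\square(\kappa)$-definition) is never accidentally repaired. Non-threadability is preserved because a global thread through the modified sequence would, by reversing the modification, yield one through the original $\square_\kappa$-sequence, contradicting that it was a genuine square sequence. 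I would organize the argument so that this reversal is manifest, treating the two structural properties ($\otp{C_\gamma}<\kappa$ and $\Lim(C_\gamma)\cap E=\emptyset$) as constraints imposed on top of the inductively guaranteed coherence rather than as separate passes over the sequence.
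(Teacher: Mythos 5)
Your overall skeleton — first shorten the order types to be strictly below $\kappa$, then use Fodor's Lemma to fix the order type on a stationary $E\subseteq S$ and modify again to avoid $E$ — is the same as the paper's, but both of your concrete modification steps have gaps. In the first pass, thinning $D_\gamma$ to \lanf those limit points $\beta$ with $\otp{D_\gamma\cap\beta}<\kappa$\ranf{} is vacuous: when $\otp{D_\gamma}=\kappa$, \emph{every} proper initial segment already has order type $<\kappa$, so this returns $\Lim(D_\gamma)$, which still has order type $\kappa$; and choosing, for each $\gamma$ separately, a club in $\gamma$ of order type $\cof{\gamma}$ inside $D_\gamma$ destroys coherence, as you yourself note without resolving it. The paper's resolution is to fix a single club $C\subseteq\kappa$ of order type $\cof{\kappa}$ and thin in the \emph{order-type coordinate}: $B_\gamma=\Set{\beta\in A_\gamma}{\otp{A_\gamma\cap\beta}\in C}$ whenever $\otp{A_\gamma}\in\Lim(C)\cup\{\kappa\}$, with a recursive case distinction (pasting $B_{\beta^\gamma_\alpha}$, where $\alpha=\max(\Lim(C)\cap\otp{A_\gamma})$, onto a tail of $A_\gamma$) for the remaining order types. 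Because the thinning depends only on $\otp{A_\gamma\cap\beta}$, which is intrinsic to the coherent sequence, coherence propagates automatically — this is the uniform definition your sketch calls for but does not supply.

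In the second pass, your proposed operation — deleting from each club its intersection with \lanf small neighborhoods\ranf{} of points of $E$ — is not workable: it is underspecified, and deleting points at stationarily many places cannot in general be propagated coherently. What makes the paper's argument go through is a structural observation you are missing: once Fodor gives a stationary $E\subseteq S$ and a single $\lambda<\kappa$ with $\otp{B_\gamma}=\lambda$ for all $\gamma\in E$, each club can meet $E$ in \emph{at most one} limit point, since for $\beta\in\Lim(B_\gamma)$ coherence gives $\otp{B_\beta}=\otp{B_\gamma\cap\beta}$ and the map $\beta\mapsto\otp{B_\gamma\cap\beta}$ is strictly increasing, so at most one $\beta\in\Lim(B_\gamma)$ satisfies $\otp{B_\beta}=\lambda$. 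The fix is then a simple truncation, again in the order-type coordinate: set $C_\gamma=\Set{\beta\in B_\gamma}{\otp{B_\gamma\cap\beta}>\lambda}$ when $\otp{B_\gamma}>\lambda$ and $C_\gamma=B_\gamma$ otherwise; this excises the unique candidate limit point in $E$ and preserves coherence for the same intrinsic-definability reason. Finally, your concern about accidentally repairing non-threadability is moot: for sequences with $\otp{C_\gamma}\leq\kappa$, non-threadability is automatic, since any thread $C$ (a club in $\kappa^+$) has limit points $\gamma$ with $\otp{C\cap\gamma}>\kappa$, contradicting coherence at $\gamma$.
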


\begin{proof}
 Fix a $\square_\kappa$-sequence $\seq{A_\gamma}{\gamma\in\Lim\cap\kappa^+}$ and a closed unbounded subset $C$ of $\kappa$ of order-type $\cof{\kappa}$. 
    Given $\gamma\in\Lim\cap\kappa^+$, let $\lambda_\gamma=\otp{A_\gamma}\leq\kappa$ and let $\seq{\beta^\gamma_\alpha}{\alpha<\lambda_\gamma}$ denote the monotone enumeration of $A_\gamma$. 
  Given $\gamma\in\Lim\cap\kappa^+$ with $\lambda_\gamma\in\Lim(C)\cup\{\kappa\}$, let $B_\gamma=\Set{\beta\in A_\gamma}{\otp{A_\gamma\cap\beta}\in C}$. 
 Next, if $\gamma\in\Lim\cap\kappa^+$ with $\lambda_\gamma\notin\Lim(C)\cup\{\kappa\}$ and $\Lim(C)\cap\lambda_\gamma=\emptyset$, then we define $B_\gamma=A_\gamma$. 
  Finally, if $\gamma\in\Lim\cap\kappa^+$ with $\lambda_\gamma\notin\Lim(C)\cup\{\kappa\}$ and $\Lim(C)\cap\lambda_\gamma\neq\emptyset$, then we set $\alpha=\max(\Lim(C)\cap\lambda_\gamma)<\lambda_\gamma$ and we define $B_\gamma=B_{\beta^\gamma_\alpha}\cup(A_\gamma\setminus B_{\beta^\gamma_\alpha})$. 
 
 \begin{claim*}
  The sequence $\seq{B_\gamma}{\gamma\in\Lim\cap\kappa^+}$ is a $\square_\kappa$-sequence with $\otp{B_\gamma}<\kappa$ for all $\gamma\in\Lim\cap\kappa^+$. \qed
 \end{claim*}
 
  With the help of Fodor's Lemma, we can now find a stationary subset $E$ of $S$ and $\lambda<\kappa$ with $\otp{B_\gamma}=\lambda$ for all $\gamma\in E$. Then we have $\betrag{\Lim(B_\gamma)\cap E}\leq 1$ for all $\gamma\in\Lim\cap\kappa^+$. 
  Given $\gamma\in\Lim\cap\kappa^+$, define $C_\gamma=B_\gamma$ if $\otp{B_\gamma}\leq\lambda$ and let $C_\gamma=\Set{\beta\in B_\gamma}{\otp{B_\gamma\cap\beta}>\lambda}$ if $\otp{B_\gamma}>\lambda$. 
  
   \begin{claim*}
  The sequence $\seq{C_\gamma}{\gamma\in\Lim\cap\kappa^+}$ is a $\square_\kappa$-sequence with $\otp{C_\gamma}<\kappa$ and $\Lim(C_\gamma)\cap E=\emptyset$ for all $\gamma\in\Lim\cap\kappa^+$. \qed
 \end{claim*}
 
 This completes the proof of the lemma. 
\end{proof}

 We are now ready to prove the main result of this section that will allow us to handle successors of singular cardinals of measurable cofinality in the proof of Theorem \ref{theorem:CanonicalModelsCharFresh}.

\begin{theorem}\label{theorem:SuccSingularFresh}
 Let $U$ be a normal ultrafilter on a measurable cardinal $\delta$ and let $\kappa$ be a singular  cardinal of cofinality $\delta$ with $2^\kappa=\kappa^+$ and the property that $\lambda^\delta<\kappa$ holds for all $\lambda<\kappa$. 
 If there exists a $\square_\kappa$-sequence, then there is a closed unbounded subset of $\kappa^+$ that is fresh over $\Ult{\VV}{U}$. 
\end{theorem}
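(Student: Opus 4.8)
The plan is to run a square--threading argument in the spirit of the proof of Theorem~\ref{theorem:IndSquareFresh}, but calibrated so that the fresh club lands below the true successor $\kappa^+$ rather than below the image point $j_U(\kappa^+)$; the order-type restriction supplied by Lemma~\ref{lemma:ModSquare} is what makes this recalibration possible.

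First I would record how $j=j_U$ behaves near $\kappa$. Fixing an increasing cofinal sequence $\seq{\kappa_i}{i<\delta}$ of cardinals below $\kappa$, the hypothesis $\lambda^\delta<\kappa$ for $\lambda<\kappa$ gives $\betrag{j(\kappa_i)}\leq\kappa_i^\delta<\kappa$, whence $j(\kappa_i)<\kappa$ and $\sup(j[\kappa])=\kappa$. On the other hand $\kappa<j(\kappa)$, and since the ordinals below $j(\kappa)$ are represented by functions $\delta\to\kappa$, of which there are $\kappa^\delta=\kappa^+$ many (using $2^\kappa=\kappa^+$), we obtain $\kappa^+\leq j(\kappa)<\kappa^{++}$. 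I would also note that $\kappa^+$ remains a cardinal in the inner model $\Ult{\VV}{U}$, that $j[\kappa^+]$ is cofinal in $j_U(\kappa^+)$ with order type $\kappa^+$ (every $f\colon\delta\to\kappa^+$ being bounded), and -- crucially -- that Lemma~\ref{lemma:UltrapowerEmbeddingFixedPoints} does \emph{not} apply to $\kappa$, so that $j$ genuinely moves both $\kappa$ and $\kappa^+$. This is the feature that distinguishes the present case from the regular-cardinal case handled by Theorem~\ref{theorem:IndSquareFresh} and forces the use of $\square_\kappa$ rather than merely a $\square(\kappa^+)$-sequence.

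Next I would apply Lemma~\ref{lemma:ModSquare} to an appropriate stationary set $S\subseteq\kappa^+$ to fix a $\square_\kappa$-sequence $\vec{C}=\seq{C_\gamma}{\gamma\in\Lim\cap\kappa^+}$ and a stationary $E\subseteq S$ with $\otp{C_\gamma}<\kappa$ and $\Lim(C_\gamma)\cap E=\emptyset$ for all $\gamma$. Using $\cof{\kappa}=\delta$, the strict bound $\otp{C_\gamma}<\kappa$ lets me split each $C_\gamma$ according to the levels $\kappa_i$ and, via {\L}os' Theorem, attach to each $\gamma$ a club $A_\gamma\subseteq\gamma$ with $A_\gamma\in\Ult{\VV}{U}$, chosen so that the family $\vec{A}=\seq{A_\gamma}{\gamma\in\Lim\cap\kappa^+}$ is coherent (that is, $A_\beta=A_\gamma\cap\beta$ whenever $\beta\in\Lim(A_\gamma)$) and inherits the non-reflection $\Lim(A_\gamma)\cap E=\emptyset$ from $\vec{C}$. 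The point of keeping the representing functions bounded below $\kappa$ is exactly that the resulting $A_\gamma$ are subsets of $\gamma<\kappa^+$, so that the union $A=\bigcup_{\gamma}A_\gamma$ is a closed unbounded subset of $\kappa^+=\lub(A)$ rather than of $j_U(\kappa^+)$; coherence then yields $A\cap\gamma=A_\gamma\in\Ult{\VV}{U}$ for every $\gamma<\kappa^+$, so that every proper initial segment of $A$ lies in $\Ult{\VV}{U}$. (The sequence $\vec{A}$ itself will not be an element of $\Ult{\VV}{U}$, which is consistent with its entries being so, since $\Ult{\VV}{U}$ is only closed under $\delta$-sequences.)

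It then remains to show $A\notin\Ult{\VV}{U}$. Assuming otherwise, I would write $A=[f]_U$ and decode this representation as in the proof of Theorem~\ref{theorem:IndSquareFresh}: representing $\kappa^+=[k]_U$ with $k(i)<\kappa$, the fact that $A$ is club and coheres with $\vec{A}$ forces, for $U$-many $i$, the set $f(i)$ to be a club in $k(i)$ that threads $\vec{C}$ below $k(i)$. Amalgamating these coordinatewise threads -- and here the non-reflection property $\Lim(C_\gamma)\cap E=\emptyset$ is used to rule out the incompatible, merely local threads -- I would extract a single closed unbounded $C\subseteq\kappa^+$ with $C\cap\gamma=C_\gamma$ for all $\gamma\in\Lim(C)$, contradicting the non-threadability clause (c) in the definition of a $\square(\kappa^+)$-sequence. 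The step I expect to be the main obstacle is precisely this amalgamation: because $\sup(j[\kappa^+])=j_U(\kappa^+)\neq\kappa^+$, every $\Ult{\VV}{U}$-representation of $A$ necessarily presents it as an ultraproduct of clubs that are individually \emph{bounded} below $\kappa$, and turning such an object back into a genuine cofinal thread of $\vec{C}$ on $\kappa^+$ is where the order-type bound and the stationary set $E$ have to be combined with care.
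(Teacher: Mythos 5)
Your proposal contains a concrete inconsistency already at the level of the intermediate object, before the freshness argument begins. You require the family $\seq{A_\gamma}{\gamma\in\Lim\cap\kappa^+}$ to satisfy simultaneously that $A\cap\gamma=A_\gamma$ for every limit $\gamma<\kappa^+$ (where $A=\bigcup_\gamma A_\gamma$) and that $\Lim(A_\gamma)\cap E=\emptyset$ for all $\gamma$, with $E$ stationary. These two demands are contradictory: fix a limit ordinal $\beta\in E$ and any limit ordinal $\gamma$ with $\beta<\gamma<\kappa^+$; by full coherence, $A_\gamma\cap\beta=A\cap\beta=A_\beta$ is club in $\beta$, hence $\beta\in\Lim(A_\gamma)\cap E$, contradicting non-reflection. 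This is not a fixable slip of quantifiers; it reflects the fact that a fresh club cannot have its initial segment at \emph{every} limit ordinal prescribed by a coherent family avoiding a stationary set. The paper's construction therefore pins down the club $C$ only along the sparse cofinal sequence $\seq{[f_\gamma]_U}{\gamma<\kappa^+}$, via $C\cap[f_\gamma]_U=[c_\gamma]_U$; this sequence is deliberately \emph{discontinuous} at ordinals of cofinality $\delta$ (in particular at points of $E$), and since every $\beta<\kappa^+$ lies below some $[f_\gamma]_U$, this weaker control already suffices to put all proper initial segments of $C$ into $\Ult{\VV}{U}$.

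The deeper gap is in your non-membership argument, which replaces the paper's key mechanism by one that cannot work. You propose that a representation $A=[f]_U\in\Ult{\VV}{U}$ should be "amalgamated" into a thread of $\vec{C}$, contradicting the non-threadability clause. In Theorem \ref{theorem:IndSquareFresh} this works because the fresh club's initial segments are coordinatewise \emph{the levels} $C_{\gamma,\xi}$ of an indexed square on the regular cardinal $\kappa$, the candidate $f(\xi)$ is a club in $\kappa$ itself, and a pigeonhole over the $\delta$-many coordinates (valid since $\kappa$ is regular and greater than $\delta$) produces one coordinate $\xi$ whose set $f(\xi)$ threads level $\xi$. Here, by contrast, $\kappa^+=[\xi\mapsto\kappa_\xi^+]_U$, so any representation $f$ of a subset of $\kappa^+$ consists of subsets of the \emph{small} ordinals $\kappa_\xi^+<\kappa$, and for a fixed coordinate $\xi$ the ordinals $g_\gamma(\xi)<\kappa_\xi^+$ representing the various $\gamma<\kappa^+$ carry no monotone or coherent structure as $\gamma$ varies; no pigeonhole reassembles this data into a club in $\kappa^+$ threading $\vec{C}$. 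Indeed, the initial segments of any club built from a $\square_\kappa$-sequence cannot equal the $C_\gamma$'s themselves (otherwise that club would already be a thread, which is impossible since $\otp{C_\gamma}\leq\kappa$), so the link between $A$ and $\vec{C}$ is necessarily too loose for your strategy, and you give no mechanism for the amalgamation beyond naming it. This is exactly why the paper abandons threading at successors of singulars and instead proves $C\notin\Ult{\VV}{U}$ by \emph{diagonalization}: using $2^\kappa=\kappa^+$ it enumerates all candidates $h_\alpha\in\prod_{\xi<\delta}\POT{\kappa_\xi^+}$ (every subset of $\kappa^+$ lying in the ultrapower equals $[h_\alpha]_U$ for some $\alpha$) and arranges $c_{\gamma_\alpha}(\xi)\neq h_\alpha(\xi)\cap f_{\gamma_\alpha}(\xi)$ at the $\alpha$-th point $\gamma_\alpha$ of $E$, the discontinuity of $\seq{[f_\gamma]_U}{\gamma<\kappa^+}$ at these cofinality-$\delta$ points providing the needed freedom; the non-reflection property $\Lim(C_\gamma)\cap E=\emptyset$ is used precisely to exempt points of $E$ from the coherence constraints, not to "rule out incompatible local threads". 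Your proposal invokes $2^\kappa=\kappa^+$ only for cardinal-arithmetic bookkeeping and never for this enumeration, which is the crux of the proof; the first open question at the end of the paper singles out exactly this use of the $\GCH$ as the obstacle to strengthening the theorem, so an argument bypassing it, as yours attempts to do, would be a significant surprise rather than a routine variant.
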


\begin{proof}
  By our assumptions, we can apply Lemma \ref{lemma:ModSquare} to obtain a $\square_\kappa$-sequence $\seq{C_\gamma}{\gamma\in\Lim\cap\kappa^+}$ and a stationary subset $E$ of $S^{\kappa^+}_\delta$ such that $\otp{C_\gamma}<\kappa$ and $\Lim(C_\gamma)\cap E=\emptyset$ for all $\gamma\in\Lim\cap\kappa^+$. 
  Next, note that Lemma \ref{lemma:UltrapowerEmbeddingFixedPoints} implies that $j_U((\nu^\delta)^+)=(\nu^\delta)^+<\kappa$ holds for all cardinals $\nu<\kappa$. This allows us to fix the monotone enumeration $\seq{\kappa_\xi}{\xi<\delta}$ of a closed unbounded subset of $\kappa$ of order-type $\delta$ with the property that $j_U(\kappa_\xi)=\kappa_\xi$ holds for all $\xi<\delta$. 
  In this situation, the normality of $U$ implies that $[\xi\mapsto\kappa_\xi]_U=\kappa$ and $[\xi\mapsto\kappa_\xi^+]_U\leq\kappa^+$. 
  Given $\gamma\in\Lim\cap\kappa^+$, let $\xi_\gamma$ denote the minimal element $\xi$ of $\delta$ with $\kappa_\xi^+>\otp{C_\gamma}$. Note that  $\xi_\gamma\geq\xi_\beta$ holds for all $\gamma\in\Lim\cap\kappa^+$ and $\beta\in\Lim(C_\gamma)$.  
 
  In the following, we inductively construct a sequence $$\seq{f_\gamma\in\prod_{\xi<\delta}\kappa_\xi^+}{\gamma<\kappa^+}.$$
  The idea behind this construction is that these functions represent 
  a cofinal subset of $\kappa^+$ 
  and thereby in particular witness that $[\xi\mapsto\kappa_\xi^+]_U = \kappa^+$. 
  We identify each $f_\gamma \in \prod_{\xi<\delta}\kappa_\xi^+$ with a function with domain $\delta$ in the obvious way and define: 
  \begin{itemize}
   \item $f_0(\xi)=0$ for all $\xi<\delta$. 
   
   \item $f_{\gamma+1}(\xi)=f_\gamma(\xi)+1$ for all $\gamma<\kappa^+$ and $\xi<\delta$. 
   
   \item If $\gamma\in\Lim\cap\kappa^+$ with $\Lim(C_\gamma)$ bounded in $\gamma$ and $\xi<\delta$, then 
     $$f_\gamma(\xi) ~ = ~ \min\Set{\lambda\in\Lim}{\textit{$\lambda>f_\beta(\xi)$ for all $\beta\in C_\gamma\setminus\max(\Lim(C_\gamma))$}}.$$ 
   
   \item If $\gamma\in\Lim\cap\kappa^+$ with $\Lim(C_\gamma)$ unbounded in $\gamma$ and $\xi<\xi_\gamma$, then $f_\gamma(\xi)=\omega$. 
   
      \item If $\gamma\in\Lim\cap\kappa^+$ with $\Lim(C_\gamma)$ unbounded in $\gamma$ and $\xi_\gamma\leq\xi<\delta$, then $$f_\gamma(\xi) ~ = ~ \sup\Set{f_\beta(\xi)}{\beta\in\Lim(C_\gamma)}.$$ 
  \end{itemize}

  \begin{claim*}
   \begin{enumerate}
    \item If $\beta<\gamma<\kappa^+$, then $f_\beta(\xi)<f_\gamma(\xi)$ for coboundedly many $\xi<\delta$. 
    
        \item  If $\gamma\in\Lim\cap\kappa^+$, $\beta\in\Lim(C_\gamma)$ and $\xi_\gamma\leq\xi<\delta$, then $f_\beta(\xi)<f_\gamma(\xi)$. 
        
    \item If $\gamma\in\Lim\cap\kappa^+$, then $f_\gamma(\xi)\in\Lim$ for all $\xi<\delta$. 
   \end{enumerate}
  \end{claim*}
  
  \begin{proof}[Proof of the Claim]
   (i) We prove the statement by induction on $0<\gamma<\kappa^+$, where the successor step follows trivially from our induction hypothesis. 
   Now, assume that $\gamma\in\Lim\cap\kappa^+$ with $\Lim(C_\gamma)$ bounded in $\gamma$. Since $\delta$ is an uncountable regular cardinal, our induction hypothesis allows us to find $\zeta<\delta$ with the property that $f_{\beta_0}(\xi)<f_{\beta_1}(\xi)$ holds for all $\beta_0,\beta_1\in C_\gamma\setminus\max(\Lim(C_\gamma))$ with $\beta_0<\beta_1$ and all $\zeta\leq\xi<\delta$. By definition, we now have $$f_\gamma(\xi) ~ = ~ \sup\Set{f_\beta(\xi)}{\beta\in C_\gamma\setminus\max(\Lim(C_\gamma))}$$ for all $\zeta\leq\xi<\delta$.  Since $C_\gamma\setminus\max(\Lim(C_\gamma))$ is a cofinal subset of $\gamma$, the desired statement for $\gamma$ now follows directly from our induction hypothesis. 
  Finally, if $\gamma\in\Lim\cap\kappa^+$ with $\Lim(C_\gamma)$ unbounded in $\gamma$, then the desired statement for $\gamma$ follows directly from the definition of $f_\gamma$ and our induction hypothesis. 
   
   (ii) We  prove the claim by induction on $\gamma\in\Lim\cap\kappa^+$. 
    First, if $\gamma\in\Lim\cap\kappa^+$ with $\Lim(C_\gamma)$ bounded in $\gamma$ and $\beta=\max(\Lim(C_\gamma))$, then our definition ensures that $f_\beta(\xi)<f_\gamma(\xi)$ holds for all $\xi<\delta$ and hence the desired statement follows directly from our induction hypothesis.  
    Next, if $\gamma\in\Lim\cap\kappa^+$ with $\Lim(C_\gamma)$ unbounded in $\gamma$, then our induction hypothesis implies that $f_{\beta_0}(\xi)<f_{\beta_1}(\xi)$ holds for all $\beta_0,\beta_1\in\Lim(C_\gamma)$ with $\beta_0<\beta_1$ and all $\xi_{\beta_1}\leq\xi<\delta$. 
    Since $\xi_\gamma\geq\xi_\beta$ holds for all $\beta\in\Lim(C_\gamma)$, this fact together with our definition yields the desired statement for $\gamma$.     
    
   (iii) This statement is a direct consequence of the definition of the sequence $\seq{f_\gamma}{\gamma < \kappa^+}$ and statement (ii). 
  \end{proof}

 Note that the first part of the above claim in particular shows that we have $[f_\beta]_U<[f_\gamma]_U<[\xi\mapsto\kappa_\xi^+]_U$  for all $\beta<\gamma<\kappa^+$. 
   Since we already observed that $[\xi\mapsto\kappa_\xi^+]_U=(\kappa^+)^{\Ult{\VV}{U}}\leq\kappa^+$ holds,  we can conclude that $[\xi\mapsto\kappa_\xi^+]_U=\kappa^+$.

   Next, notice that the fact that $2^\kappa=\kappa^+$ holds allows us to fix an enumeration $\seq{h_\alpha}{\alpha<\kappa^+}$ of $\prod_{\xi<\delta}\POT{\kappa_\xi^+}$ of order-type $\kappa^+$. 
 In addition, let $\seq{\gamma_\alpha}{\alpha<\kappa^+}$ denote the monotone enumeration of $E$. 
%
 %
  %
  %
  %
  We now inductively define a sequence $$\seq{c_\gamma}{\gamma\in\Lim\cap\kappa^+}$$ of functions with domain $\delta$   satisfying the following statements  for all $\gamma\in\Lim\cap\kappa^+$: 
  \begin{enumerate}[label = (\alph{enumi}), ref = (\alph{enumi})]
   \item $c_\gamma(\xi)$ is a closed unbounded subset of $f_\gamma(\xi)$ for all $\xi<\delta$. 
   
   
   \item If $\beta\in\Lim\cap\gamma$, then $f_\beta(\xi)<f_\gamma(\xi)$ and $c_\beta(\xi)=c_\gamma(\xi)\cap f_\beta(\xi)$ for coboundedly many $\xi<\delta$. 
   
   \item If $\gamma\notin E$, then $c_\beta(\xi)=c_\gamma(\xi)\cap f_\beta(\xi)$ for all $\beta\in\Lim(C_\gamma)$ and $\xi_\gamma\leq\xi<\delta$. 
   
   \item\label{it:(d)} If $\gamma \in E$ and $\alpha<\kappa^+$ with $\gamma=\gamma_\alpha$, then $c_\gamma(\xi)\neq h_\alpha(\xi)\cap f_\gamma(\xi)$ for all $\xi_\gamma\leq\xi<\delta$. 
  \end{enumerate}

  The idea behind this definition is to use the fact that the sequence $\seq{[f_\gamma]_U}{\gamma<\kappa^+}$ is not continuous at ordinals of cofinality $\delta$ to \emph{diagonalize} against the sequence $\seq{[h_\alpha]_U}{\alpha<\kappa^+}$ of subsets of $\kappa^+$ in $\Ult{\VV}{U}$ in \ref{it:(d)}. 
  The inductive definition of this sequence is straightforward, but we decided to give the details to convince the reader that it works. We distinguish between the following cases: 
  
   \setcounter{case}{0}

   \begin{case}
     $\gamma\in\Lim\cap\kappa^+$ with $\Lim\cap\gamma$ bounded in $\gamma$.
   \end{case}
   
   First, we set $\beta_0=0$ if $\Lim(C_\gamma)=\emptyset$ and $\beta_0=\max(\Lim(C_\gamma))$ otherwise. 
   Next, we set $\beta_1=0$ if $\gamma=\omega$ and $\beta_1=\max(\Lim\cap\gamma)$ otherwise. 
  We than have $\beta_0\leq\beta_1<\gamma$ and $f_{\beta_0}(\xi)<f_\gamma(\xi)$ for all $\xi<\delta$. 
  Using our induction hypothesis, we can find $\xi_\gamma\leq\zeta<\delta$ with the property that $f_{\beta_0}(\xi)\leq f_{\beta_1}(\xi)<f_\gamma(\xi)$ holds for all $\zeta\leq\xi<\delta$ and, if $\beta_0>0$, then $c_{\beta_0}(\xi)=c_{\beta_1}(\xi)\cap f_{\beta_0}(\xi)$ for all $\zeta\leq\xi<\delta$. 
      Note that our assumptions imply that $\Lim(C_\gamma)$ is bounded in $\gamma$ and hence the definition of $f_\gamma(\xi)$ ensures that $\cof{f_\gamma(\xi)}=\omega$  holds for every $\xi<\delta$. 
      Therefore, we can fix a sequence of strictly increasing functions $\seq{\map{k_\xi}{\omega}{f_\gamma(\xi)}}{\xi<\delta}$ with the property that $k_\xi$ is cofinal in $f_\gamma(\xi)$ for all $\xi<\delta$, $k_\xi(0)=f_{\beta_0}(\xi)$ for all $\xi<\zeta$, and $k_\xi(0)=f_{\beta_1}(\xi)$ for all $\zeta\leq\xi<\delta$. 
      Define 
 \begin{equation*}
    c_\gamma(\xi) ~ = ~  
   \begin{cases}
     \Set{k_\xi(n)}{n<\omega}, & \textit{for all $\xi<\zeta$, if $\beta_0=0$.} \\
    c_{\beta_0}(\xi) ~ \cup ~ \Set{k_\xi(n)}{n<\omega}, & \textit{for all $\xi<\zeta$,  if $\beta_0>0$.} \\
   \Set{k_\xi(n)}{n<\omega}, & \textit{for all $\zeta\leq\xi<\delta$, if $\beta_1=0$.} \\ 
   c_{\beta_1}(\xi) ~ \cup ~ \Set{k_\xi(n)}{n<\omega}, & \textit{for all $\zeta\leq\xi<\delta$, if $\beta_1>0$.}
  \end{cases}
 \end{equation*}
  
    These definitions ensure that $c_\gamma(\xi)$ is a closed unbounded subset of $f_\gamma(\xi)$ for all $\xi<\delta$ . Moreover, if $\beta_0>0$, then $c_\gamma(\xi)\cap f_{\beta_0}(\xi)=c_{\beta_0}(\xi)$ holds for all $\xi<\delta$. This inductively implies that $c_\gamma(\xi)\cap f_\beta(\xi)=c_\beta(\xi)$ holds for all $\beta\in\Lim(C_\gamma)$ and all $\xi_\gamma\leq\xi<\delta$. 
  Next, if $\beta_1>0$ and $\zeta\leq\xi<\delta$, then $f_{\beta_1}(\xi)<f_\gamma(\xi)$ and $c_\gamma(\xi)\cap f_{\beta_1}(\xi)=c_{\beta_1}(\xi)$. 
   This allows us to conclude that for all $\beta\in\Lim\cap\gamma$, we have $f_\beta(\xi)<f_\gamma(\xi)$ and $c_\beta(\xi)=c_\gamma(\xi)\cap f_\beta(\xi)$ for coboundedly many $\xi<\delta$.

  \begin{case}
 $\gamma\in\Lim\cap\kappa^+$ with $\Lim\cap\gamma$ unbounded in $\gamma$ and $\Lim(C_\gamma)$ bounded in $\gamma$.
\end{case}

 Since our assumptions imply that $\cof{\gamma}=\omega$, there is a strictly increasing sequence $\seq{\beta_n}{n<\omega}$ cofinal in $\gamma$ such that $\beta_n\in\Lim\cap\gamma$ for all $0<n<\omega$, $\beta_0=0$ in case $\Lim(C_\gamma)=\emptyset$, and  $\beta_0=\max(\Lim(C_\gamma))$ in case $\Lim(C_\gamma)\neq\emptyset$. 
  By the regularity of $\delta$, we can find $\xi_\gamma\leq\zeta<\delta$ such that $f_{\beta_n}(\xi)<f_{\beta_{n+1}}(\xi)<f_\gamma(\xi)$ and $c_{\beta_{n+2}}(\xi)\cap f_{\beta_{n+1}}(\xi)=c_{\beta_{n+1}}(\xi)$ for all $\zeta\leq\xi<\delta$ and all $n<\omega$ and, if  $\beta_0>0$, then $c_{\beta_1}(\xi)\cap f_{\beta_0}(\xi)=c_{\beta_0}(\xi)$ for all $\zeta\leq\xi<\delta$. 
  By the definition of $f_\gamma$, we then have  $f_\gamma(\xi)=\sup\Set{f_{\beta_n}(\xi)}{n<\omega}$ for all $\zeta\leq\xi<\delta$. 
  Since the definition of $f_\gamma$ also implies that $\cof{f_\gamma(\xi)}=\omega$  and $f_{\beta_0}(\xi)<f_\gamma(\xi)$ for all $\xi<\delta$, we can fix a sequence of strictly increasing functions $\seq{\map{k_\xi}{\omega}{f_\gamma(\xi)}}{\xi<\zeta}$ with the property that $k_\xi$ is cofinal in $f_\gamma(\xi)$ for all $\xi<\zeta$ and $k_\xi(0)=f_{\beta_0}(\xi)$ for all $\xi<\zeta$.
  Define  
  \begin{equation*}
    c_\gamma(\xi) ~ = ~ \begin{cases}
   \Set{k_\xi(n)}{n<\omega}, & \textit{for all $\xi<\zeta$, if $\beta_0=0$.}\\
    c_{\beta_0}(\xi) ~ \cup ~ \Set{k_\xi(n)}{n<\omega}, & \textit{for all $\xi<\zeta$, if $\beta_0>0$.}\\
  \bigcup\Set{c_{\beta_n}(\xi)}{0<n<\omega}, & \textit{for all $\zeta\leq\xi<\delta$,  if $\beta_0=0$.}\\
  \bigcup\Set{c_{\beta_n}(\xi)}{n<\omega}, & \textit{for all $\zeta\leq\xi<\delta$,  if $\beta_0>0$.}
  \end{cases}
 \end{equation*}
  
  Then the set $c_\gamma(\xi)$ is closed and unbounded in $f_\gamma(\xi)$ for all $\xi<\delta$. 
  In addition, if $\beta_0>0$, then $c_\gamma(\xi)\cap f_{\beta_0}(\xi)=c_{\beta_0}(\xi)$ for all $\xi<\delta$. 
   In particular, we have $c_\gamma(\xi)\cap f_\beta(\xi)=c_\beta(\xi)$ for all $\beta\in\Lim(C_\gamma)$ and all $\xi_\gamma\leq\xi<\delta$. 
  Next, if $0<n<\omega$ and $\zeta\leq\xi<\delta$, then $f_{\beta_n}(\xi)<f_\gamma(\xi)$ and $c_\gamma(\xi)\cap f_{\beta_n}(\xi)=c_{\beta_n}(\xi)$. This directly implies that for all $\beta\in\Lim\cap\gamma$, we have $f_\beta(\xi)<f_\gamma(\xi)$ and $c_\gamma(\xi)\cap f_\beta(\xi)=c_\beta(\xi)$ for coboundedly many $\xi<\delta$. 
  
  \begin{case}
    $\gamma\in\Lim\cap\kappa^+$ with $\gamma\notin E$ and $\Lim(C_\gamma)$ unbounded in $\gamma$.
  \end{case}

Let 
 \begin{equation*}
   c_\gamma(\xi) ~ = ~ \begin{cases}
  \omega, & \textit{for all $\xi<\xi_\gamma$.}\\ 
   \bigcup\Set{c_\beta(\xi)}{\beta\in\Lim(C_\gamma)}, & \textit{for all $\xi_\gamma\leq\xi<\delta$.}
  \end{cases}
 \end{equation*}
  
   %
   Fix $\beta_0,\beta_1\in\Lim(C_\gamma)$ with $\beta_0<\beta_1$. 
   Then $\beta_0\in\Lim(C_{\beta_1})$ and $\beta_1\notin E$ by the choice of the $\square_\kappa$-sequence and the stationary set $E$, because we have $\beta_1 \in \Lim(C_\gamma)$. 
   Moreover, if $\xi_\gamma\leq\xi<\delta$, 
  then the set $c_{\beta_1}(\xi)$ is unbounded in $f_{\beta_1}(\xi)$, $\xi\geq\xi_{\beta_1}$, $f_{\beta_0}(\xi)<f_{\beta_1}(\xi)$ and $c_{\beta_0}(\xi)=c_{\beta_1}(\xi)\cap f_{\beta_0}(\xi)$. 
   This shows that $c_\gamma(\xi)$ is a closed unbounded subset of $f_\gamma(\xi)$ for all $\xi<\delta$, and $c_\gamma(\xi)\cap f_\beta(\xi)=c_\beta(\xi)$ holds for all $\beta\in\Lim(C_\gamma)$ and all $\xi_\gamma\leq\xi<\delta$. Moreover, if $\beta_0\in\Lim\cap\gamma$ and $\beta_1\in\Lim(C_\gamma)$ with $\beta_0<\beta_1$, then there is $\xi_\gamma\leq\zeta<\delta$ with $f_{\beta_0}(\xi)<f_{\beta_1}(\xi)$ and $c_{\beta_0}(\xi)=c_{\beta_1}(\xi)\cap f_{\beta_0}(\xi)$ for all $\zeta\leq\xi<\delta$ and hence we have $f_{\beta_0}(\xi)<f_\gamma(\xi)$ and $c_{\beta_0}(\xi)=c_\gamma(\xi)\cap f_{\beta_0}(\xi)$ for all $\zeta\leq\xi<\delta$. 
 
   \begin{case}
     $\gamma \in E$. 
   \end{case}
 
   Fix $\alpha<\kappa^+$ with $\gamma=\gamma_\alpha$. 
      Let $\seq{\beta_\xi}{\xi<\delta}$ be the monotone enumeration of a subset of $\Lim(C_\gamma)$ of order-type $\delta$ that is closed unbounded in $\gamma$.  
   Given $\xi_\gamma\leq\xi<\delta$,  we have $f_{\beta_\xi}(\xi)<f_\gamma(\xi)$ and we can therefore pick a closed unbounded subset $C^\gamma_\xi$ of $f_\gamma(\xi)$ with $\min(C^\gamma_\xi)=f_{\beta_\xi}(\xi)$ and $C^\gamma_\xi\neq h_\alpha(\xi)\cap[f_{\beta_\xi}(\xi),f_\gamma(\xi))$.  
   Now, define 
   \begin{equation*}
     c_\gamma(\xi) ~ = ~ \begin{cases} 
      \omega, & \textit{for all $\xi<\xi_\gamma$.} \\ 
      c_{\beta_\xi}(\xi) ~ 
      \cup ~ C^\gamma_\xi, & \textit{for all $\xi_\gamma\leq\xi<\delta$.} 
  \end{cases}
 \end{equation*}

    %
   Then $c_\gamma(\xi)$ is a closed unbounded subset of $f_\gamma(\xi)$ for all $\xi<\delta$ and, if $\xi_\gamma\leq\xi<\delta$, then $c_\gamma(\xi)\neq h_\alpha(\xi)\cap f_\gamma(\xi)$.  
   Moreover, if $\xi_\gamma\leq\zeta<\xi<\delta$, then  $\beta_\zeta\in\Lim(C_{\beta_\xi})$, $\beta_\xi\notin E$, $\xi>\xi_{\beta_\xi}$, $f_{\beta_\zeta}(\xi)<f_{\beta_\xi}(\xi)<f_\gamma(\xi)$ and $$c_{\beta_\zeta}(\xi) ~ = ~ c_{\beta_\xi}(\xi)\cap f_{\beta_\zeta}(\xi) ~ = ~ f_\gamma(\xi)\cap f_{\beta_\zeta}(\xi).$$  
   In particular, this shows that for all $\beta\in\Lim\cap\gamma$, we have $f_\beta(\xi)<f_\gamma(\xi)$ and $c_\beta(\xi)=c_\gamma(\xi)\cap f_\beta(\xi)$ for coboundedly many $\xi<\delta$. 
   
 \medskip
  
  The above construction ensures that $[c_\gamma]_U$ is a closed unbounded subset of $[f_\gamma]_U$ for all $\gamma\in\Lim\cap\kappa^+$. Moreover, we have $[c_\beta]_U=[c_\gamma]_U\cap[f_\beta]_U$ for all $\beta,\gamma\in\Lim\cap\kappa^+$ with $\beta<\gamma$. 
  In particular, there is a closed unbounded subset $C$ of $\kappa^+$ with $C\cap[f_\gamma]_U=[c_\gamma]_U$ for all $\gamma\in\Lim\cap\kappa^+$.

  \begin{claim*}
   The set $C$ is fresh over $\Ult{\VV}{U}$. 
  \end{claim*}
  
  \begin{proof}[Proof of the Claim]
   First, if $\beta<\kappa^+$, then there is $\gamma\in\Lim\cap\kappa^+$ with $[f_\gamma]_U>\beta$  and  $$C\cap\beta ~ =  ~ (C\cap[f_\gamma]_U)\cap\beta ~ = ~ [c_\gamma]_U\cap\beta ~ \in ~ \Ult{\VV}{U}.$$ 
   
   Next, assume, towards a contradiction, that $C$ is an element of $\Ult{\VV}{U}$. 
   Then there is an $\alpha<\kappa^+$ with $C=[h_\alpha]_U$. 
    Since we have $$[h_\alpha]_U\cap[f_{\gamma_\alpha}]_U ~ = ~ C\cap[f_{\gamma_\alpha}]_U ~ = ~ [c_{\gamma_\alpha}]_U,$$ we know that the set $\Set{\xi<\delta}{h_\alpha(\xi)\cap f_{\gamma_\alpha}(\xi)=c_{\gamma_\alpha}(\xi)}$ is an element of $U$. 
    In particular, we can find $\xi_{\gamma_\alpha}\leq\xi<\delta$ with $h_\alpha(\xi)\cap f_{\gamma_\alpha}(\xi)=c_{\gamma_\alpha}(\xi)$, contradicting the definition of $c_{\gamma_\alpha}$. 
      \end{proof}
  
  This completes the proof of the theorem. 
\end{proof}


\section{Regular cardinals in $\Ult{\VV}{U}$}

 We now turn to the construction of fresh subsets of limit ordinals that are not cardinals in $\VV$. 
 We first observe that we can restrict ourselves to ordinals that are regular cardinals in the corresponding ultrapower.

\begin{proposition}\label{proposition:CofUltraFresh}
 Let $U$ be a normal ultrafilter on a measurable cardinal $\delta$ and let $\lambda$ be a limit cardinal. If there is an unbounded subset of $\cof{\lambda}^{\Ult{\VV}{U}}$ that is fresh over $\Ult{\VV}{U}$, then there is an unbounded subset of $\lambda$ that is fresh over $\Ult{\VV}{U}$. 
\end{proposition}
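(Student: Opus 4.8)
The plan is to push the given fresh subset forward along a cofinal map witnessing the cofinality of $\lambda$ inside the ultrapower. Write $M=\Ult{\VV}{U}$ and set $\mu=\cof{\lambda}^M$. By assumption we may fix an unbounded subset $A$ of $\mu$ that is fresh over $M$, so that $A\notin M$ while $A\cap\alpha\in M$ for all $\alpha<\lub(A)=\mu$ (the set $A$ being unbounded in the limit ordinal $\mu$). Since $\mu$ is, by definition, the cofinality of $\lambda$ as computed inside $M$, and since $M\models\ZFC$, I would work inside $M$ to fix a strictly increasing function $\map{c}{\mu}{\lambda}$ whose range is cofinal in $\lambda$; note that $c\in M$ and that suprema of ranges are absolute, so $c[\mu]$ is genuinely cofinal in $\lambda$ in $\VV$ as well.

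First I would define $B=c[A]=\Set{c(\alpha)}{\alpha\in A}$ and observe that, since $A$ is unbounded in $\mu$ and $c$ is strictly increasing and cofinal, the set $B$ is an unbounded subset of $\lambda$, so that $\lub(B)=\lambda$.

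Next I would verify the two defining clauses of freshness. For the non-containment $B\notin M$, the key point is that $c$ is injective and lies in $M$: if $B$ were an element of $M$, then so would be $\Set{\alpha<\mu}{c(\alpha)\in B}$, and this set is precisely $A$ (injectivity of $c$ together with $B=c[A]$ gives $c(\alpha)\in B\iff\alpha\in A$), contradicting $A\notin M$. For the initial-segment clause, fix $\eta<\lambda=\lub(B)$ and let $\alpha<\mu$ be least with $c(\alpha)\geq\eta$, which exists since $c$ is cofinal. Monotonicity of $c$ then yields $B\cap\eta=c[A\cap\alpha]$; since $A$ is fresh over $M$ and $\alpha<\mu$, we have $A\cap\alpha\in M$, and since $c\in M$ the image $c[A\cap\alpha]$ is an element of $M$. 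Hence $B\cap\eta\in M$. Together these show that $B$ is an unbounded subset of $\lambda$ that is fresh over $M$, as required.

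The argument is essentially a transfer of freshness along a map internal to the ultrapower, and I do not expect a serious obstacle. The only points demanding care are that the witnessing cofinal map be chosen strictly increasing and inside $M$ -- so that both the image $c[A\cap\alpha]$ and the preimage computation recovering $A$ can be carried out within $M$ -- and that absoluteness of suprema guarantees the pushed-forward set $B$ is genuinely unbounded in $\lambda$ in $\VV$ and not merely in $M$.
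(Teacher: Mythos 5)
Your proposal is correct and follows essentially the same argument as the paper: both fix a strictly increasing cofinal map $\map{c}{\cof{\lambda}^{\Ult{\VV}{U}}}{\lambda}$ inside the ultrapower and push the fresh set forward along it. The paper states this in one line, while you supply the routine verifications (injectivity giving $B\notin\Ult{\VV}{U}$, and $B\cap\eta=c[A\cap\alpha]$ for the initial-segment clause), all of which are accurate.
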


\begin{proof}
 Set $\lambda_0=\cof{\lambda}^{\Ult{\VV}{U}}$. Let $A$ be an unbounded subset of $\lambda_0$ that is fresh over $\Ult{\VV}{U}$ and let $\seq{\gamma_\eta}{\eta<\lambda_0}$ be a strictly increasing sequence that is cofinal in $\lambda$ and an element of $\Ult{\VV}{U}$. In this situation, the set $\Set{\gamma_\eta}{\eta\in A}$ is unbounded in $\lambda$ and fresh over $\Ult{\VV}{U}$. 
\end{proof}

In the proof of the following result, we modify techniques from the proof of Theorem \ref{theorem:SuccSingularFresh} to cover the non-cardinal case in Theorem \ref{theorem:CanonicalModelsCharFresh}.

\begin{theorem}\label{theorem:FreshLimitOrdinalsNotCardinals}
 Let $U$ be a normal ultrafilter on a measurable cardinal $\delta$, let $\kappa$ be a singular cardinal of cofinality $\delta$ with the property that $\mu^\delta<\kappa$ holds for all $\mu<\kappa$ and let $\kappa^+<\lambda<j_U(\kappa)$ be a limit ordinal of cofinality $\kappa^+$ that is a regular cardinal in $\Ult{\VV}{U}$. If there is a $\square_\kappa$-sequence, then there is an unbounded subset of $\lambda$ that is fresh over $\Ult{\VV}{U}$. 
\end{theorem}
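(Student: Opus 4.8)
The plan is to adapt the construction in the proof of Theorem~\ref{theorem:SuccSingularFresh}, building a coherent system of clubs that threads all the way up to $\lambda$ rather than to $\kappa^+$, while keeping all combinatorial bookkeeping on $\kappa^+$, which is the $V$-cofinality of $\lambda$. First I would fix a representation $\lambda=[\xi\mapsto\mu_\xi]_U$: since $\lambda$ is a regular cardinal of $\Ult{\VV}{U}$ with $\kappa^+<\lambda<j_U(\kappa)$, {\L}os' Theorem lets me take each $\mu_\xi$ to be a regular cardinal below $\kappa$, and the fact that $\lambda>\kappa$ forces $\seq{\mu_\xi}{\xi<\delta}$ to be cofinal in $\kappa$. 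As in Theorem~\ref{theorem:SuccSingularFresh}, I would fix a closed unbounded set $\seq{\kappa_\xi}{\xi<\delta}$ of fixed points of $j_U$ that is cofinal in $\kappa$ (via Lemma~\ref{lemma:UltrapowerEmbeddingFixedPoints}), apply Lemma~\ref{lemma:ModSquare} to obtain a $\square_\kappa$-sequence $\seq{C_\gamma}{\gamma\in\Lim\cap\kappa^+}$ together with a stationary $E\subseteq S^{\kappa^+}_\delta$ satisfying $\otp{C_\gamma}<\kappa$ and $\Lim(C_\gamma)\cap E=\emptyset$, and replace the ordinal $\xi_\gamma$ by the least $\xi<\delta$ with $\mu_\xi>\otp{C_\gamma}$, which is well-defined because $\seq{\mu_\xi}{\xi<\delta}$ is cofinal in $\kappa$.

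Next I would repeat the inductive construction of the functions $f_\gamma$, now as elements of $\prod_{\xi<\delta}\mu_\xi$, essentially verbatim from Theorem~\ref{theorem:SuccSingularFresh}. The one genuinely new point is cofinality: in Theorem~\ref{theorem:SuccSingularFresh} the target $\kappa^+=[\xi\mapsto\kappa_\xi^+]_U$ was a regular cardinal of $V$, so a $\kappa^+$-length increasing sequence below it was automatically cofinal, whereas here $\cof{\lambda}=\kappa^+<\lambda$, so I must actively arrange that $\seq{[f_\gamma]_U}{\gamma<\kappa^+}$ is \emph{cofinal} in $\lambda$ rather than merely bounded. I would do this by fixing a $V$-cofinal sequence $\seq{\lambda_\gamma}{\gamma<\kappa^+}$ in $\lambda$ and modifying the successor step so that $[f_{\gamma+1}]_U\geq\lambda_{\gamma+1}$; this is a routine change that does not affect the three-part Claim about the $f_\gamma$ (pointwise monotonicity on a cobounded set of coordinates, strict increase along $\Lim(C_\gamma)$ above $\xi_\gamma$, and limit values), which then goes through unchanged and yields $[\xi\mapsto\mu_\xi]_U=\lambda$ together with an increasing cofinal sequence $\seq{[f_\gamma]_U}{\gamma<\kappa^+}$ in $\lambda$.

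I would then construct the functions $c_\gamma$ with $c_\gamma(\xi)$ closed unbounded in $f_\gamma(\xi)$ by following the same four cases as in Theorem~\ref{theorem:SuccSingularFresh}, so that the $[c_\gamma]_U$ form a coherent sequence of clubs, $[c_\beta]_U=[c_\gamma]_U\cap[f_\beta]_U$ for $\beta<\gamma$, and therefore define a single closed unbounded $C\subseteq\lambda$ with $C\cap[f_\gamma]_U=[c_\gamma]_U\in\Ult{\VV}{U}$ for all $\gamma\in\Lim\cap\kappa^+$. Since the $[f_\gamma]_U$ are cofinal in $\lambda$, every proper initial segment $C\cap\beta$ lies in $\Ult{\VV}{U}$, so $C$ is approximated by the ultrapower; the diagonalization in the case $\gamma\in E$, which forces $c_\gamma(\xi)\neq h_\alpha(\xi)\cap f_\gamma(\xi)$ for all $\xi_\gamma\leq\xi<\delta$ whenever $\gamma=\gamma_\alpha$, then shows $C\neq[h_\alpha]_U$ for every $\alpha<\kappa^+$ and hence $C\notin\Ult{\VV}{U}$, as desired.

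The step I expect to be the main obstacle is exactly what makes this last diagonalization legitimate: I need to enumerate representatives $\seq{h_\alpha}{\alpha<\kappa^+}$ of \emph{all} subsets of $\lambda$ lying in $\Ult{\VV}{U}$ in order type $\kappa^+$, so that each candidate can be defeated at the corresponding point $\gamma_\alpha\in E$. This reduces to the cardinal computation $\betrag{\prod_{\xi<\delta}\POT{\mu_\xi}}=\kappa^+$. The bound $\kappa^\delta=\kappa^+$ itself follows from the hypothesis that $\mu^\delta<\kappa$ holds for all $\mu<\kappa$; the delicate part is that this count needs $2^{\mu_\xi}<\kappa$ for $U$-almost all $\xi$, equivalently $(2^\lambda)^{\Ult{\VV}{U}}<j_U(\kappa)$, which is the place where the ambient cardinal arithmetic enters and where the $\GCH$ available above $\delta$ in the application to Theorem~\ref{theorem:CanonicalModelsCharFresh} does the work. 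Granting this enumeration, the verification that $C$ is closed, unbounded, and coherent is identical to the corresponding computations in Theorem~\ref{theorem:SuccSingularFresh}, and the proof concludes exactly as there.
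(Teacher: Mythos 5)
There is a genuine gap, and it is exactly at the step you flag as ``the main obstacle.'' Your diagonalization requires an enumeration $\seq{h_\alpha}{\alpha<\kappa^+}$ of $\prod_{\xi<\delta}\POT{h(\xi)}$ (representatives of all subsets of $\lambda$ in $\Ult{\VV}{U}$) in order type $\kappa^+$, which needs $2^{h(\xi)}<\kappa$ for $U$-almost all $\xi$ together with $\kappa^\delta=\kappa^+$. Neither follows from the hypotheses of the theorem, which assert only that $\mu^\delta<\kappa$ holds for all $\mu<\kappa$: for instance, $2^{\delta^+}>\kappa^+$ is perfectly compatible with $(\delta^+)^\delta\leq 2^\delta<\kappa$, and nothing is assumed about $2^\kappa$. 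Deferring to the $\GCH$ available in the application to Theorem \ref{theorem:CanonicalModelsCharFresh} changes the statement being proved: you obtain the theorem only under additional cardinal-arithmetic hypotheses. This is not a cosmetic point -- the paper explicitly emphasizes, in its closing discussion of open questions, that in contrast to Theorem \ref{theorem:SuccSingularFresh} (where $2^\kappa=\kappa^+$ is an assumption and the diagonalization via Lemma \ref{lemma:ModSquare} and the stationary set $E$ is used), the proof of the present theorem makes no use of the $\GCH$ at $\kappa$; that freedom is what lets Theorem \ref{theorem:CanonicalModelsCharFresh} and Theorem \ref{theorem:StrengthSakai} go through where they do.

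The paper's proof avoids the enumeration altogether, and this is where the hypothesis that $\lambda$ is a regular cardinal in $\Ult{\VV}{U}$ is really used (you invoke it only to choose the coordinates $h(\xi)$ regular). The setup is as you describe -- fixed points $\kappa_\xi$ from Lemma \ref{lemma:UltrapowerEmbeddingFixedPoints}, a function $h$ with $[h]_U=\lambda$ and $h(\xi)$ regular in $(\kappa_\xi^+,\kappa)$, and the cofinality-correcting successor step $f_{\gamma+1}(\xi)=\max(f_\gamma(\xi),h_\gamma(\xi))+1$, which you identified correctly -- but condition (a) on the $c_\gamma$ is strengthened to: $c_\gamma(\xi)$ is a closed unbounded subset of $f_\gamma(\xi)$ with $\otp{c_\gamma(\xi)}<\kappa_\xi^+$. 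Since $[\xi\mapsto\kappa_\xi^+]_U=\kappa^+$, each $[c_\gamma]_U$ then has order type less than $\kappa^+$, so the resulting club $C\subseteq\lambda$ satisfies $\otp{C}=\kappa^+<\lambda$. As order type is absolute and any club in $\lambda$ lying in $\Ult{\VV}{U}$ must have order type $\lambda$ there by regularity, it follows that $C\notin\Ult{\VV}{U}$ for free -- no stationary set $E$, no fourth case, no diagonalization, and no appeal to $2^\kappa$. To repair your write-up, drop Lemma \ref{lemma:ModSquare} and the $E$-case (any $\square_\kappa$-sequence with $\otp{C_\gamma}<\kappa$ suffices) and instead track the order types $\otp{c_\gamma(\xi)}$ through the three remaining cases of the induction, using $\otp{C_\gamma}<\kappa_\xi^+$ for $\xi\geq\xi_\gamma$ and the regularity of $\kappa_\xi^+$ in the limit cases.
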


\begin{proof}
 As in the proof of Theorem \ref{theorem:SuccSingularFresh}, we can apply Lemma \ref{lemma:UltrapowerEmbeddingFixedPoints} to find the monotone enumeration $\seq{\kappa_\xi}{\xi<\delta}$ of a closed unbounded subset of $\kappa$ of order-type $\delta$ with the property that $j_U(\kappa_\xi)=\kappa_\xi$ holds for all $\xi<\delta$. 
 Then normality implies that $[\xi\mapsto\kappa_\xi]_U=\kappa$ and we can repeat arguments from the first part of the proof of Theorem \ref{theorem:SuccSingularFresh} to see that $[\xi\mapsto\kappa_\xi^+]_U=\kappa^+$. 
 By our assumptions, there is a function $h$ with domain $\delta$, $[h]_U=\lambda$ and the property that $h(\xi)$ is a regular cardinal in the interval $(\kappa_\xi^+,\kappa)$ for all $\xi<\delta$. Fix a sequence $\seq{h_\gamma\in\prod_{\xi<\delta}h(\xi)}{\gamma<\kappa^+}$ such that the sequence $\seq{[h_\gamma]_U}{\gamma<\kappa^+}$ is strictly increasing and cofinal in $\lambda$. 

 Pick a $\square_\kappa$-sequence $\seq{C_\gamma}{\gamma\in\Lim\cap\kappa^+}$ with $\otp{C_\gamma}<\kappa$ for all $\gamma\in\Lim\cap\kappa^+$. 
  Given $\gamma\in\Lim\cap\kappa^+$, we let $\xi_\gamma$ denote the minimal element $\xi$ of $\delta$ with $\kappa_\xi^+>\otp{C_\gamma}$.  
 
 We now inductively construct a sequence $$\seq{f_\gamma\in\prod_{\xi<\delta}h(\xi)}{\gamma<\kappa^+}$$ by setting: 
  \begin{itemize}
   \item $f_0(\xi)=0$ for all $\xi<\delta$. 
   
   \item $f_{\gamma+1}(\xi)=\max(f_\gamma(\xi),h_\gamma(\xi))+1$ for all $\gamma<\kappa^+$ and $\xi<\delta$. 
   
   \item If $\gamma\in\Lim\cap\kappa^+$ with $\Lim(C_\gamma)$ bounded in $\gamma$ and $\xi<\delta$, then 
   $$f_\gamma(\xi) ~ = ~ \min\Set{\lambda\in\Lim}{\textit{$\lambda>f_\beta(\xi)$ for all $\beta\in C_\gamma\setminus\max(\Lim(C_\gamma))$}}.$$ 
   
   \item If $\gamma\in\Lim\cap\kappa^+$ with $\Lim(C_\gamma)$ unbounded in $\gamma$ and $\xi<\xi_\gamma$, then $f_\gamma(\xi)=\omega$. 
   
      \item If $\gamma\in\Lim\cap\kappa^+$ with $\Lim(C_\gamma)$ unbounded in $\gamma$ and $\xi_\gamma\leq\xi<\delta$, then $$f_\gamma(\xi) ~ = ~ \sup\Set{f_\beta(\xi)}{\beta\in\Lim(C_\gamma)}.$$ 
  \end{itemize}

  As in the proof of Theorem \ref{theorem:SuccSingularFresh}, we have the following claim. 

  \begin{claim*}
   \begin{enumerate}
    \item If $\beta<\gamma<\kappa^+$, then $f_\beta(\xi)<f_\gamma(\xi)$ for coboundedly many $\xi<\delta$. 
    
    \item If $\gamma\in\Lim\cap\kappa^+$, then $f_\gamma(\xi)\in\Lim$ for all $\xi<\delta$. 
    
    \item If $\gamma\in\Lim\cap\kappa^+$, $\beta\in\Lim(C_\gamma)$ and $\xi_\gamma\leq\xi<\delta$, then $f_\beta(\xi)<f_\gamma(\xi)$. \qed 
   \end{enumerate}
  \end{claim*}

 In particular, this shows that the sequence $\seq{[f_\gamma]_U}{\gamma<\kappa^+}$ is strictly increasing. 
 Since the above definition ensures that $[h_\gamma]_U<[f_{\gamma+1}]_U$ holds for all $\gamma<\kappa^+$, we also know that this sequence is cofinal in $\lambda$.

  Next, we inductively define a sequence $\seq{c_\gamma}{\gamma\in\Lim\cap\kappa^+}$ of functions with domain $\delta$ such that the following statements hold for all $\gamma\in\Lim\cap\kappa^+$: 
  \begin{enumerate}
   \item[(a)] $c_\gamma(\xi)$ is a closed unbounded subset of $f_\gamma(\xi)$ with $\otp{c_\gamma(\xi)}<\kappa_\xi^+$ for all $\xi<\delta$. 
   
   
   \item[(b)] If $\beta\in\Lim\cap\gamma$, then $f_\beta(\xi)<f_\gamma(\xi)$ and $c_\beta(\xi)=c_\gamma(\xi)\cap f_\beta(\xi)$ for coboundedly many $\xi<\delta$. 
   
   \item[(c)] If $\beta\in\Lim(C_\gamma)$ and $\xi_\gamma\leq\xi<\delta$, then $c_\beta(\xi)=c_\gamma(\xi)\cap f_\beta(\xi)$.  
  \end{enumerate}

  Our inductive construction distinguishes between the following cases:

   \setcounter{case}{0}
  
   \begin{case}
     $\gamma\in\Lim\cap\kappa^+$ with $\Lim\cap\gamma$ bounded in $\gamma$.
   \end{case}
   
  We set $\beta_0=0$ if $\Lim(C_\gamma)=\emptyset$ and $\beta_0=\max(\Lim(C_\gamma))$ otherwise. 
  Moreover, we set $\beta_1=0$ if $\gamma=\omega$ and $\beta_1=\max(\Lim\cap\gamma)$ otherwise. 
   This definition ensures that $\beta_0\leq\beta_1<\gamma$  and $f_{\beta_0}(\xi)<f_\gamma(\xi)$ for all $\xi<\delta$. 
  We can now find $\xi_\gamma\leq\zeta<\delta$ with the property that $f_{\beta_0}(\xi)\leq f_{\beta_1}(\xi)<f_\gamma(\xi)$ for all $\zeta\leq\xi<\delta$ and, if $\beta_0>0$, then $c_{\beta_0}(\xi)=c_{\beta_1}(\xi)\cap f_{\beta_0}(\xi)$ for all $\zeta\leq\xi<\delta$. 
   Since the definition of $f_\gamma$ implies that $\cof{f_\gamma(\xi)}=\omega$ holds for all $\xi<\delta$, we can pick a sequence of strictly increasing functions $\seq{\map{k_\xi}{\omega}{f_\gamma(\xi)}}{\xi<\delta}$ with the property that $k_\xi$ is cofinal in $f_\gamma(\xi)$  for all $\xi<\delta$, $k_\xi(0)=f_{\beta_0}(\xi)$ for all $\xi<\zeta$, and $k_\xi(0)=f_{\beta_1}(\xi)$ for all $\zeta\leq\xi<\delta$. 
   Let 
 \begin{equation*}
   c_\gamma(\xi) ~ = ~ \begin{cases}
  \Set{k_\xi(n)}{n<\omega}, & \textit{for all $\xi<\zeta$, if $\beta_0=0$.}\\
  c_{\beta_0}(\xi) ~ \cup ~ \Set{k_\xi(n)}{n<\omega}, & \textit{for all $\xi<\zeta$, if $\beta_0>0$.}\\
  \Set{k_\xi(n)}{n<\omega}, & \textit{for all $\zeta\leq\xi<\delta$, if $\beta_1=0$.}\\ 
  c_{\beta_1}(\xi) ~ \cup ~ \Set{k_\xi(n)}{n<\omega}, & \textit{for all $\zeta\leq\xi<\delta$, if $\beta_1>0$.}
  \end{cases}
 \end{equation*}

   %
  %
  
 Then $c_\gamma(\xi)$ is a closed unbounded subset of $f_\gamma(\xi)$ of order-type less than $\kappa_\xi^+$ for all $\xi<\delta$ and, if $\beta_0>0$, then $c_\gamma(\xi)\cap f_{\beta_0}(\xi)=c_{\beta_0}(\xi)$ for all $\xi<\delta$. 
  In particular, we know that $c_\gamma(\xi)\cap f_\beta(\xi)=c_\beta(\xi)$ for all $\beta\in\Lim(C_\gamma)$ and all $\xi_\gamma\leq\xi<\delta$. 
  Finally, notice that $\beta_1>0$ implies that $f_{\beta_1}(\xi)<f_\gamma(\xi)$ and $c_\gamma(\xi)\cap f_{\beta_1}(\xi)=c_{\beta_1}(\xi)$ hold for all $\zeta\leq\xi<\delta$.  This shows that for all $\beta\in\Lim\cap\gamma$, we have $f_\beta(\xi)<f_\gamma(\xi)$ and $c_\beta(\xi)=c_\gamma(\xi)\cap f_\beta(\xi)$ for coboundedly many $\xi<\delta$.  

  \begin{case}
    $\gamma\in\Lim\cap\kappa^+$ with $\Lim\cap\gamma$ unbounded in $\gamma$ and $\Lim(C_\gamma)$ bounded in $\gamma$.
  \end{case}
  
 Since the limit points of $C_\gamma$ are bounded in $\gamma$, we have $\cof{\gamma}=\omega$ and we can pick a strictly increasing sequence $\seq{\beta_n}{n<\omega}$ cofinal in $\gamma$ such that $\beta_n\in\Lim\cap\gamma$ for all $0<n<\omega$, $\beta_0=0$ in case $\Lim(C_\gamma)=\emptyset$, and $\beta_0=\max(\Lim(C_\gamma))$ in case $\Lim(C_\gamma)\neq\emptyset$. 
  Fix $\xi_\gamma\leq\zeta<\delta$ such that $f_{\beta_n}(\xi)<f_{\beta_{n+1}}(\xi)<f_\gamma(\xi)$ and $c_{\beta_{n+2}}(\xi)\cap f_{\beta_{n+1}}(\xi)=c_{\beta_{n+1}}(\xi)$ for all $\zeta\leq\xi<\delta$ and all $n<\omega$, and, if $\beta_0>0$, $c_{\beta_1}(\xi)\cap f_{\beta_0}(\xi)=c_{\beta_0}(\xi)$ for all $\zeta\leq\xi<\delta$. 
  Then the definition of $f_\gamma$ ensures that $\cof{f_\gamma(\xi)}=\omega$ and $f_{\beta_0}(\xi)<f_\gamma(\xi)$ for all $\xi<\delta$. 
  Moreover, it also directly implies that $f_\gamma(\xi)=\sup\Set{f_{\beta_n}(\xi)}{n<\omega}$ holds  for all $\zeta\leq\xi<\delta$.  
  Fix a sequence of strictly increasing functions $\seq{\map{k_\xi}{\omega}{f_\gamma(\xi)}}{\xi<\zeta}$ such that $k_\xi(0)=f_{\beta_0}(\xi)$ and $k_\xi$ is cofinal  in $f_\gamma(\xi)$ for all $\xi<\zeta$. 
  Define 
  \begin{equation*}
   c_\gamma(\xi) ~ = ~ \begin{cases}
   \Set{k_\xi(n)}{n<\omega}, & \textit{for all $\xi<\zeta$, if $\beta_0=0$.}\\
   c_{\beta_0}(\xi) ~ \cup ~ \Set{k_\xi(n)}{n<\omega}, & \textit{for all $\xi<\zeta$, if $\beta_0>0$.}\\ 
   \bigcup\Set{c_{\beta_n}(\xi)}{0<n<\omega}, & \textit{for all $\zeta\leq\xi<\delta$,  if $\beta_0=0$.}\\
   \bigcup\Set{c_{\beta_n}(\xi)}{n<\omega}, & \textit{for all $\zeta\leq\xi<\delta$, if $\beta_0>0$.}
  \end{cases}
 \end{equation*}

 %
  %
  
  Given $\xi<\delta$, the set $c_\gamma(\xi)$ is closed and unbounded in $f_\gamma(\xi)$ and the regularity of $\kappa_\xi^+$ implies that $\otp{c_\gamma(\xi)}<\kappa_\xi^+$. 
  Next, $\beta_0>0$ implies that  $c_\gamma(\xi)\cap f_{\beta_0}(\xi)=c_{\beta_0}(\xi)$ for all $\xi<\delta$, and therefore $c_\gamma(\xi)\cap f_\beta(\xi)=c_\beta(\xi)$ for all $\beta\in\Lim(C_\gamma)$ and all $\xi_\gamma\leq\xi<\delta$. 
  Finally, we have $f_{\beta_n}(\xi)<f_\gamma(\xi)$ and $c_\gamma(\xi)\cap f_{\beta_n}(\xi)=c_{\beta_n}(\xi)$ for all $0<n<\omega$ and $\zeta\leq\xi<\delta$, and hence for all $\beta\in\Lim\cap\gamma$, we have $f_\beta(\xi)<f_\gamma(\xi)$ and $c_\gamma(\xi)\cap f_\beta(\xi)=c_\beta(\xi)$ for coboundedly many $\xi<\delta$. 
  
  \begin{case}
    $\gamma\in\Lim\cap\kappa^+$ with $\Lim(C_\gamma)$ unbounded in $\gamma$.
  \end{case}

   Let 
   \begin{equation*}
    c_\gamma(\xi) ~ = ~ \begin{cases} 
    \omega, & \textit{for all $\xi<\xi_\gamma$.}\\
    \bigcup\Set{c_\beta(\xi)}{\beta\in\Lim(C_\gamma)}, & \textit{for all $\xi_\gamma\leq\xi<\delta$.}
   \end{cases}
  \end{equation*}

   %
 Given $\beta_0,\beta_1\in\Lim(C_\gamma)$ with $\beta_0<\beta_1$ and  $\xi_\gamma\leq\xi<\delta$, the above definition ensures that $f_{\beta_0}(\xi)<f_{\beta_1}(\xi)$ and $c_{\beta_0}(\xi)=c_{\beta_1}(\xi)\cap f_{\beta_0}(\xi)$. 
 Since we have $f_\gamma(\xi)=\sup\Set{f_\beta(\xi)}{\beta\in\Lim(C_\gamma)}$ and $\otp{C_\gamma}<\kappa_\xi^+$ for all $\xi_\gamma\leq\xi<\delta$, this shows that $c_\gamma(\xi)$ is a closed unbounded subset of $f_\gamma(\xi)$ of order-type less than $\kappa_\xi^+$ for all $\xi<\delta$, and, if $\beta\in\Lim(C_\gamma)$ and $\xi_\gamma\leq\xi<\delta$, then $c_\gamma(\xi)\cap f_\beta(\xi)=c_\beta(\xi)$ holds. 
    Finally, given $\beta_0\in\Lim\cap\gamma$ and $\beta_1\in\Lim(C_\gamma)$ with $\beta_0<\beta_1$, our induction hypothesis yields $\xi_\gamma\leq\zeta<\delta$ with $f_{\beta_0}(\xi)<f_{\beta_1}(\xi)$ and $c_{\beta_0}(\xi)=c_{\beta_1}(\xi)\cap f_{\beta_0}(\xi)$ for all $\zeta\leq\xi<\delta$, and this ensures that $f_{\beta_0}(\xi)<f_\gamma(\xi)$ and $c_{\beta_0}(\xi)=c_\gamma(\xi)\cap f_{\beta_0}(\xi)$ for all $\zeta\leq\xi<\delta$. 
 
 \medskip
 
 Given $\gamma\in\Lim\cap\kappa^+$, the properties listed above ensure that $[c_\gamma]_U$ is a closed unbounded subset of $[f_\gamma]_U$ of order-type less than $\kappa^+$. Moreover, if $\beta,\gamma\in\Lim\cap\kappa^+$ with $\beta<\gamma$, then $[c_\beta]_U=[c_\gamma]_U\cap[f_\beta]_U$. These observations show that there is a closed unbounded subset $C$ of $\lambda$ with $C\cap[f_\gamma]_U=[c_\gamma]_U$ for all $\gamma\in\Lim\cap\kappa^+$ and this property directly implies that $\otp{C}=\kappa^+<\lambda$. Since $\lambda$ is a regular cardinal in $\Ult{\VV}{U}$, this allows us to conclude that the set $C$ is not contained in $\Ult{\VV}{U}$ and hence it is fresh over $\Ult{\VV}{U}$. 
 \end{proof}

We end this section by using the above results to show that the validity of the equivalence stated in Theorem \ref{theorem:sakai} has high consistency strength.

\begin{proof}[Proof of Theorem \ref{theorem:StrengthSakai}]
  Let $U$ be a normal ultrafilter on a measurable cardinal $\delta$. 
  
   (i) Assume that  $\kappa>\delta^+$ is a regular cardinal such that  there  exists $\theta\in\{\kappa,\kappa^+\}$ with the property that there is a $\square(\theta)$-sequence. The regularity of $\theta$ then implies that $j_U[\theta]$ is cofinal in $j_U(\theta)$ and hence $\cof{j_U(\theta)}=\theta$. Using Theorem \ref{theorem:IndSquareFresh}, we now find an unbounded subset of $j_U(\theta)$ that is fresh over $\Ult{\VV}{U}$.

  (ii) Now, assume that $\kappa$ is a singular strong limit cardinal of cofinality $\delta$ such that $\kappa^\delta=\kappa^+$ and there exists a $\square_\kappa$-sequence. 
  Set $\theta=\kappa^+$ and $\lambda=(\theta^+)^{\Ult{\VV}{U}}$.   
  Since $\betrag{j_U(\kappa)}\leq\kappa^\delta=\theta$ and elementarity implies that $\lambda<j_U(\kappa)$, we have $\lambda<\theta^+$. 
  
  \begin{claim*}
   $\cof{\lambda}=\theta$. 
  \end{claim*}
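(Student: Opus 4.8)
The plan is to compute $\cof{\lambda}$ by realizing $\lambda$ as an ultrapower of a product of successor cardinals and then running a diagonalization argument against short families. First I would invoke Lemma \ref{lemma:UltrapowerEmbeddingFixedPoints}, exactly as in the proofs of Theorems \ref{theorem:SuccSingularFresh} and \ref{theorem:FreshLimitOrdinalsNotCardinals}, to fix the monotone enumeration $\seq{\kappa_\xi}{\xi<\delta}$ of a closed unbounded subset of $\kappa$ of order-type $\delta$ with $j_U(\kappa_\xi)=\kappa_\xi$ for all $\xi<\delta$. Normality then gives $[\xi\mapsto\kappa_\xi]_U=\kappa$, and repeating the argument from the first part of the proof of Theorem \ref{theorem:SuccSingularFresh} yields $[\xi\mapsto\kappa_\xi^+]_U=\kappa^+=\theta$; applying {\L}os' Theorem to the successor-cardinal operation then produces the representation $\lambda=(\theta^+)^{\Ult{\VV}{U}}=[\xi\mapsto\kappa_\xi^{++}]_U$. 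The key consequence of this representation is that the ordinals below $\lambda$ are precisely the classes $[f]_U$ with $f(\xi)<\kappa_\xi^{++}$ for $U$-almost all $\xi$, and that $[f]_U<[g]_U$ holds if and only if $f(\xi)<g(\xi)$ for $U$-almost all $\xi$; hence $\cof{\lambda}$ is exactly the least size of a family in $\prod_{\xi<\delta}\kappa_\xi^{++}$ that is cofinal with respect to the ordering induced by $U$.

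For the upper bound I would simply note that $\theta=\kappa^+\leq\lambda<\theta^+$ was already recorded, so $\betrag{\lambda}=\kappa^+$ (equivalently, $\betrag{\prod_{\xi<\delta}\kappa_\xi^{++}}\leq\kappa^\delta=\kappa^+$), which gives $\cof{\lambda}\leq\kappa^+$. The heart of the argument is the reverse estimate $\cof{\lambda}>\kappa$, which I would obtain by showing that every family of size less than $\kappa$ is bounded below $\lambda$. So I would let $\seq{[f_i]_U}{i<\mu}$ with $\mu<\kappa$ be given, choosing representatives with $f_i(\xi)<\kappa_\xi^{++}$ for all $\xi<\delta$. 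Since $U$ contains all cobounded subsets of $\delta$ and $\kappa=\sup_{\xi<\delta}\kappa_\xi$, the set $X=\Set{\xi<\delta}{\kappa_\xi^{++}>\mu}$ lies in $U$. For each $\xi\in X$ the cardinal $\kappa_\xi^{++}$ is regular and strictly larger than $\mu$, so $\sup\Set{f_i(\xi)}{i<\mu}<\kappa_\xi^{++}$; setting $g(\xi)=\sup\Set{f_i(\xi)}{i<\mu}+1$ for $\xi\in X$ and $g(\xi)=0$ otherwise therefore yields a function with $[g]_U<\lambda$ that strictly dominates every $f_i$ modulo $U$. Hence $\seq{[f_i]_U}{i<\mu}$ is bounded below $\lambda$ and cannot be cofinal.

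This establishes $\cof{\lambda}\geq\kappa$, and since $\cof{\lambda}$ is a regular cardinal while $\kappa$ is singular, it follows that $\cof{\lambda}>\kappa$, hence $\cof{\lambda}\geq\kappa^+=\theta$. Combined with the upper bound, this gives $\cof{\lambda}=\theta$, proving the claim.

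I expect the only real obstacle to be the lower bound, and specifically the subtlety that $\Ult{\VV}{U}$ is merely closed under $\delta$-sequences. A naive transfer argument excludes only cofinalities $\mu\leq\delta$ — in that range a cofinal sequence would already belong to $\Ult{\VV}{U}$ and contradict the fact that $\lambda=(\theta^+)^{\Ult{\VV}{U}}$ is regular there — but it says nothing about the regular cardinals $\mu$ with $\delta<\mu<\kappa$. Those must be ruled out by the diagonalization above, which crucially exploits the regularity of each factor $\kappa_\xi^{++}$ together with the fact that $\mu<\kappa_\xi^{++}$ on a $U$-large set; this is precisely what lets the pointwise suprema stay below $\kappa_\xi^{++}$ and thus keeps the diagonal function below $\lambda$.
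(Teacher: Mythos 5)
Your proof is correct and follows essentially the same route as the paper: both represent $\lambda$ as $[\xi\mapsto\kappa_\xi^{++}]_U$ and then diagonalize against a short cofinal family, using the regularity of each $\kappa_\xi^{++}$ together with the fact that the family's size is below $\kappa_\xi^{++}$ on a $U$-large set of coordinates. The only difference is organizational — the paper argues by contradiction (if $\cof{\lambda}\neq\theta$ then $\cof{\lambda}<\kappa_\zeta$ for some $\zeta<\delta$, and the diagonal function gives a contradiction), whereas you prove the lower bound directly and then invoke the singularity of $\kappa$ — which is the same argument.
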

  
  \begin{proof}[Proof of the Claim]
    As in the proof of Theorem \ref{theorem:FreshLimitOrdinalsNotCardinals}, we can find a monotone enumeration $\seq{\kappa_\xi}{\xi<\delta}$ of a closed unbounded subset of $\kappa$ of order-type $\delta$ with the property that $[\xi\mapsto\kappa_\xi]_U=\kappa$ and $[\xi\mapsto\kappa_\xi^+]_U=\theta$. Then $[\xi\mapsto\kappa_\xi^{++}]_U=\lambda$. 
   Assume, towards a contradiction, that $\cof{\lambda}\neq\theta$. Since $\kappa$ is singular and $\lambda<\theta^+$, this show that there is $\zeta<\delta$ with $\cof{\lambda}<\kappa_\zeta$. 
   Pick a sequence $\seq{f_\alpha}{\alpha<\cof{\lambda}}$ of functions with domain $\delta$ such that $f_\alpha(\xi)<\kappa_\xi^{++}$ holds for all $\alpha<\cof{\lambda}$ and all $\xi<\delta$, and the induced sequence $\seq{[f_\alpha]_U}{\alpha<\cof{\lambda}}$  is strictly increasing and cofinal in $\lambda$. 
    By our assumption, there is a function $f$ with domain $\delta$ and the property that $f_\alpha(\xi)<f(\xi)<\kappa_\xi^{++}$ holds for all $\alpha<\cof{\lambda}$ and all $\zeta\leq\xi<\delta$. 
    But then we have $[f_\alpha]_U<[f]_U<\lambda$ for all $\alpha<\cof{\lambda}$, a contradiction. 
  \end{proof}
  
  Since $\kappa$ is a strong limit cardinal, the above claim now allows us to apply Theorem \ref{theorem:FreshLimitOrdinalsNotCardinals} to find an unbounded subset of $\lambda$ that is fresh over $\Ult{\VV}{U}$. 
\end{proof}


\section{Ultrapowers of canonical inner models}

With the help of the results of the previous sections, we are now ready to prove the main result of this paper.

\begin{proof}[Proof of Theorem \ref{theorem:CanonicalModelsCharFresh}]
 Fix a normal ultrafilter $U$ on a measurable cardinal $\delta$ that satisfies the three assumptions listed in the statement of the theorem.  
 By Proposition \ref{proposition:MinNonFresh}, if $\lambda$ is a limit ordinal with the property that the cardinal $\cof{\lambda}$ is either smaller than $\delta^+$ or weakly compact, then no unbounded subset of $\lambda$ is fresh over $\Ult{\VV}{U}$. In the proof of the converse implication, we first consider two special cases.

 \begin{claim*}
   If $\kappa$ is a cardinal with the property that the cardinal $\cof{\kappa}$ is greater than $\delta$ and not weakly compact, then there is an unbounded subset of $\kappa$ that is fresh over $\Ult{\VV}{U}$. 
 \end{claim*}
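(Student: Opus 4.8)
The plan is to split on the value of $\cof{\kappa}$ and to funnel everything into the fresh subset of an ultrapower image point produced by Theorem~\ref{theorem:IndSquareFresh}. Write $\mu=\cof{\kappa}$, a regular cardinal with $\delta<\mu$ that is not weakly compact. If $\mu=\delta^+$, then assumption (a) gives $2^\delta=\delta^+$, and I would simply invoke Proposition~\ref{proposition:MinFresh}(iii) with $\lambda=\kappa$ to obtain an unbounded subset of $\kappa$ that is fresh over $\Ult{\VV}{U}$. So for the rest of the argument I would assume $\mu>\delta^+$ and use assumption (b) to fix a $\square(\mu)$-sequence; Theorem~\ref{theorem:IndSquareFresh} then yields a closed unbounded subset of $j_U(\mu)$ that is fresh over $\Ult{\VV}{U}$. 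The entire difficulty is to convert this fresh subset of the image point $j_U(\mu)$ into a fresh subset of $\kappa$ itself.

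When $\kappa$ is singular it is a limit cardinal with $\cof{\kappa}=\mu\neq\delta$, so (using that $\GCH$ makes $\kappa$ a strong limit above $\delta$, whence $\lambda^\delta<\kappa$ for all $\lambda<\kappa$) Lemma~\ref{lemma:UltrapowerEmbeddingFixedPoints} gives $j_U(\kappa)=\kappa$. Elementarity then computes $\cof{\kappa}^{\Ult{\VV}{U}}=\cof{j_U(\kappa)}^{\Ult{\VV}{U}}=j_U(\mu)$, so the fresh subset of $j_U(\mu)$ produced above is precisely a fresh subset of $\cof{\kappa}^{\Ult{\VV}{U}}$. Feeding this into Proposition~\ref{proposition:CofUltraFresh} yields the desired unbounded fresh subset of $\kappa$.

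When $\kappa$ is regular we have $\kappa=\mu$, and the crux is a fixed-point computation, carried out under the partial $\GCH$ of assumption (a), showing that $j_U(\kappa)=\kappa$ unless $\kappa$ is the successor of a singular cardinal of cofinality $\delta$. Concretely, if $\kappa$ is inaccessible then Lemma~\ref{lemma:UltrapowerEmbeddingFixedPoints} applied to $\kappa$ gives $j_U(\kappa)=\kappa$; if $\kappa=\nu^+$ with $\nu$ regular then $\nu^\delta=\nu$ and a short cardinality count yields $j_U(\nu)<\nu^+$, hence $j_U(\kappa)=\kappa$; and if $\kappa=\nu^+$ with $\nu$ singular of cofinality different from $\delta$ then Lemma~\ref{lemma:UltrapowerEmbeddingFixedPoints} applied to $\nu$ gives $j_U(\nu)=\nu$ and again $j_U(\kappa)=\kappa$. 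In each of these cases the fresh subset of $j_U(\kappa)=\kappa$ provided by Theorem~\ref{theorem:IndSquareFresh} is the subset we seek. The one remaining possibility is $\kappa=\nu^+$ with $\nu$ singular of cofinality $\delta$; here assumptions (a) and (c) furnish $2^\nu=\nu^+$, the relation $\lambda^\delta<\nu$ for all $\lambda<\nu$, and a $\square_\nu$-sequence, so Theorem~\ref{theorem:SuccSingularFresh} directly produces a closed unbounded subset of $\nu^+=\kappa$ that is fresh over $\Ult{\VV}{U}$.

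The step I expect to be the main obstacle is exactly the control of $j_U(\kappa)$ relative to $\kappa$: Theorem~\ref{theorem:IndSquareFresh} is inherently a statement about the image point $j_U(\mu)$, so everything hinges on the cardinal-arithmetic bookkeeping that pins down when $j_U$ fixes $\kappa$ and, in the singular case, identifies $\cof{\kappa}^{\Ult{\VV}{U}}$ with $j_U(\mu)$. The two genuinely exceptional points, $\kappa=\delta^+$ and the successor of a singular cardinal of cofinality $\delta$, are precisely the ones at which $j_U$ moves the relevant cardinal, and they are exactly the cases that must be routed through Proposition~\ref{proposition:MinFresh}(iii) and Theorem~\ref{theorem:SuccSingularFresh} rather than Theorem~\ref{theorem:IndSquareFresh}.
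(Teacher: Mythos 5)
Your proposal is correct and takes essentially the same route as the paper: the case $\cof{\kappa}=\delta^+$ is sent to Proposition \ref{proposition:MinFresh}, the case where $\kappa$ is the successor of a singular cardinal of cofinality $\delta$ is sent to Theorem \ref{theorem:SuccSingularFresh}, and in all remaining cases one verifies $j_U(\kappa)=\kappa$ and applies Theorem \ref{theorem:IndSquareFresh} (together with Proposition \ref{proposition:CofUltraFresh} when $\kappa$ is singular). The only difference is organizational: the paper identifies the cases via the minimal cardinal $\nu$ with $\nu^\delta\geq\kappa$ and splits on $\cof{\nu}$, whereas you split directly on the form of $\kappa$ (inaccessible, successor of a regular, successor of a singular of cofinality equal to or different from $\delta$) --- the same trichotomy in substance.
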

 
 \begin{proof}[Proof of the Claim]
  We start by noting that, if $\cof{\kappa}=\delta^+$, then the fact that our assumptions imply that $2^\delta=\delta^+$ holds allows us to use Proposition \ref{proposition:MinFresh}  find a subset of $\kappa$ with the desired properties. 
  Therefore, in the following, we may assume that $\delta^+<\cof{\kappa}\leq\kappa$. Let $\nu\leq\kappa$ be minimal with $\nu^\delta\geq\kappa$. 
   By the minimality of $\nu$, we then have $\mu^\delta<\nu$ for all $\mu<\nu$. 
   In particular, the fact that $2^\delta=\delta^+<\kappa$ implies that $\nu>2^\delta>\delta$ and therefore we know that $\nu^+=\nu^\nu\geq \nu^\delta\geq\kappa\geq\nu$. 
   These computations show that either $\cof{\nu}>\delta$ and $\kappa=\nu$, or $\cof{\nu}\leq\delta$ and $\kappa=\nu^+$. 
   %

  First, assume that either $\cof{\nu}>\delta$ and $\kappa=\nu$, or $\cof{\nu}<\delta$ and $\kappa=\nu^+$. 
  Then Lemma \ref{lemma:UltrapowerEmbeddingFixedPoints} shows that $j_U(\kappa)=\kappa$ holds in both cases. 
  Moreover, since $\cof{\kappa}$ is a regular cardinal greater than $\delta^+$ 
  and $$j_U(\cof{\kappa}) ~ = ~ \cof{j_U(\kappa)}^{\Ult{\VV}{U}} ~ = ~ \cof{\kappa}^{\Ult{\VV}{U}},$$ the fact that $\cof{\kappa}$ is not weakly compact allows us to use Theorem \ref{theorem:IndSquareFresh} to find an unbounded subset of $\cof{\kappa}^{\Ult{\VV}{U}}$ that is fresh over $\Ult{\VV}{U}$.
  In this situation,  we can then apply Proposition \ref{proposition:CofUltraFresh} to obtain an unbounded subset of $\kappa$ that is fresh over $\Ult{\VV}{U}$.

  Finally, assume that $\cof{\nu}=\delta$ and $\kappa=\nu^+$. In this situation, we know that $\nu$ is a singular cardinal of cofinality $\delta$ with $2^\nu=\nu^+$ and the property that $\mu^\delta<\nu$ holds for all $\mu<\nu$. Since the assumptions of the theorem guarantee the existence of a $\square_\nu$-sequence, we can apply Theorem \ref{theorem:SuccSingularFresh}  to find an unbounded subset of $\kappa$ that is fresh over $\Ult{\VV}{U}$. 
 \end{proof}

 \begin{claim*}
  Let $\lambda$ be a limit ordinal with the property that the cardinal $\cof{\lambda}$ is greater than $\delta$ and not weakly compact. If $\lambda$ is a regular cardinal in $\Ult{\VV}{U}$, then there is an unbounded subset of $\lambda$ that is fresh over $\Ult{\VV}{U}$. 
 \end{claim*}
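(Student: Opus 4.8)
The plan is to reduce to the two freshness theorems already proved, Theorem \ref{theorem:IndSquareFresh} and Theorem \ref{theorem:FreshLimitOrdinalsNotCardinals}, by locating $\lambda$ precisely in the cardinal structure of $\Ult{\VV}{U}$. First I would dispose of the case that $\lambda$ is a cardinal in $\VV$: then $\cof{\lambda}$ is a cardinal greater than $\delta$ that is not weakly compact, and the Claim established above already yields an unbounded fresh subset. Hence I may assume $\lambda$ is not a cardinal in $\VV$, so $\lambda$ is singular in $\VV$ and $\theta:=\cof{\lambda}<\lambda$. If $\theta=\delta^+$, then since assumption (a) gives $2^\delta=\delta^+$, Proposition \ref{proposition:MinFresh}(iii) applies directly. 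So I may further assume $\theta\geq\delta^{++}$, which forces $\lambda>\delta^{++}>j_U(\delta)$, because $\betrag{j_U(\delta)}=2^\delta=\delta^+$ gives $j_U(\delta)<\delta^{++}$.

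Next I would let $\kappa$ be the least cardinal of $\VV$ with $j_U(\kappa)>\lambda$, which exists since $j_U(\betrag{\lambda}^+)>\lambda$, and the previous reduction gives $\kappa>\delta^+$. The structural dichotomy is whether $\kappa$ is a limit or a successor cardinal. If $\kappa$ is a limit cardinal, then $\cof{\kappa}$ must equal $\delta$: otherwise $j_U$ is continuous at $\kappa$, whence $j_U(\kappa)=\sup j_U[\kappa]\leq\lambda$ (using $j_U(\mu)\leq\lambda$ for all $\mu<\kappa$), contradicting the choice of $\kappa$. Thus $\kappa$ is singular of cofinality $\delta$ and, by assumption (a), satisfies $\mu^\delta<\kappa$ for all $\mu<\kappa$. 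Since $\betrag{j_U(\mu)}=\mu^\delta<\kappa$ forces $j_U(\mu)<\kappa$ for every $\mu<\kappa$, we obtain $\sup j_U[\kappa]=\kappa$, and as $\cof{\lambda}\geq\delta^{++}\neq\delta=\cof{\kappa}$ we have $\lambda\neq\kappa$, so $\lambda\in(\kappa,j_U(\kappa))$.

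If instead $\kappa=\rho^+$ is a successor cardinal, then $j_U(\rho)\leq\lambda<j_U(\rho^+)=(j_U(\rho)^+)^{\Ult{\VV}{U}}$, and since $\lambda$ is a cardinal in $\Ult{\VV}{U}$ the only possibility is $\lambda=j_U(\rho)$. By elementarity $\rho$ is then a regular cardinal of $\VV$ above $\delta^+$, and $\rho$ is moved by $j_U$ because $\lambda$ is not a cardinal of $\VV$; were $\rho$ weakly inaccessible or weakly compact, then $\cof{\rho}=\rho\neq\delta$ together with $\mu^\delta<\rho$ (from assumption (a)) and Lemma \ref{lemma:UltrapowerEmbeddingFixedPoints} would give $j_U(\rho)=\rho$, a contradiction. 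Hence $\rho$ is a non-weakly-compact regular cardinal above $\delta^+$, assumption (b) supplies a $\square(\rho)$-sequence, and Theorem \ref{theorem:IndSquareFresh} produces a closed unbounded subset of $j_U(\rho)=\lambda$ that is fresh over $\Ult{\VV}{U}$.

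It remains to finish the limit case, which is where the main difficulty lies: I must establish $\cof{\lambda}=\kappa^+$ in order to meet the hypotheses of Theorem \ref{theorem:FreshLimitOrdinalsNotCardinals}. Writing $\lambda=[f]_U$, {\L}os' Theorem lets me assume each $f(\xi)$ is a regular cardinal below $\kappa$, and $\lambda>\kappa=\sup j_U[\kappa]$ forces $f$ to be cofinal in $\kappa$ modulo $U$. The upper bound $\cof{\lambda}\leq\betrag{\lambda}=\kappa^+$ is immediate from $\betrag{j_U(\kappa)}=\kappa^\delta=\kappa^+$. For the lower bound I would argue by contradiction: if $\cof{\lambda}=\chi\leq\kappa$, then $\chi<\kappa$ by regularity, so $j_U(\chi)<\lambda$ and $\Set{\xi<\delta}{f(\xi)>\chi}\in U$; choosing $[g_\alpha]_U$ for $\alpha<\chi$ increasing and cofinal in $\lambda$ with $g_\alpha(\xi)<f(\xi)$, the regularity of $f(\xi)$ on the $U$-large set where $f(\xi)>\chi$ yields $\sup_{\alpha<\chi}g_\alpha(\xi)<f(\xi)$, so setting $g(\xi)=\sup_{\alpha<\chi}g_\alpha(\xi)$ there defines an upper bound $[g]_U<\lambda$ of the sequence, a contradiction. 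Hence $\cof{\lambda}=\kappa^+$ and $\kappa^+<\lambda<j_U(\kappa)$; since assumption (c) provides a $\square_\kappa$-sequence, Theorem \ref{theorem:FreshLimitOrdinalsNotCardinals} supplies the desired fresh subset. The hard part is precisely this cofinality computation, together with checking that the limit and successor cases for $\kappa$ are exhaustive and match the hypotheses of the two freshness theorems.
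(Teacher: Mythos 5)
Your proof is correct, and while it ends in exactly the same place as the paper --- Proposition \ref{proposition:MinFresh}(iii) for the $\delta^+$-cofinality cases, Theorem \ref{theorem:IndSquareFresh} when $\lambda=j_U(\rho)$ for a regular $\rho$, Theorem \ref{theorem:FreshLimitOrdinalsNotCardinals} in the interval case, and essentially the same pointwise-supremum diagonalization against a representing function with regular values to get $\cof{\lambda}=\kappa^+$ --- your route to locating $\lambda$ is genuinely different and somewhat cleaner. The paper sets $\theta=\betrag{\lambda}$, shows $\cof{\theta}\neq\delta$ by re-running the $[\xi\mapsto\kappa_\xi^+]_U=\kappa^+$ computation from Theorem \ref{theorem:SuccSingularFresh}, takes $\kappa$ minimal with $\kappa^\delta\geq\theta$, and splits into the cases $\kappa=2$ and $\kappa>2^\delta$ to prove the subclaim $\lambda\in(\kappa^+,j_U(\kappa)]\cup\{j_U(\kappa^+)\}$; you instead take $\kappa$ least with $j_U(\kappa)>\lambda$ and exploit continuity of $j_U$ at ordinals of cofinality $\neq\delta$ to run a limit/successor dichotomy. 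What this buys: the paper's case $\lambda=j_U(\kappa)$ with $\cof{\kappa}=\delta$ is vacuous under the hypothesis that $\lambda$ is regular in $\Ult{\VV}{U}$ (there $\cof{\lambda}^{\Ult{\VV}{U}}=j_U(\delta)<\lambda$), and in your setup it never arises, since your successor case forces $\rho$ regular by elementarity; likewise your cofinality computation implicitly rules out $\betrag{\lambda}=\kappa$, so the paper's separate $\cof{\theta}\neq\delta$ detour disappears. Two points you gloss over, both one-line fixes: first, in the successor case the assertion $\rho>\delta^+$ needs the observation that $\rho=\delta^+$ would give $\lambda=j_U(\delta^+)<\delta^{++}<\lambda$ (using $(\delta^+)^\delta=\delta^+$ from assumption (a)), and your earlier ``$\kappa>\delta^+$'' likewise rests on $j_U(\delta^+)<\delta^{++}$ rather than just $j_U(\delta)<\delta^{++}$; second, the upper bound should be stated as $\cof{\lambda}\leq\betrag{\lambda}\leq\kappa^+$, since writing $\betrag{\lambda}=\kappa^+$ is premature at that stage --- it is precisely your contradiction argument that excludes $\betrag{\lambda}=\kappa$. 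Neither affects the correctness of the argument.
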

 
 \begin{proof}[Proof of the Claim]
  First, if $\lambda$ is a cardinal, then we can use the above claim to directly derive the desired conclusion. Hence, we may assume that $\lambda$ is not a cardinal.

  \begin{subclaim*}
   There is a cardinal $\kappa$ of cofinality $\delta$ such that $$\lambda ~ \in ~ (\kappa^+,j_U(\kappa)] ~ \cup  ~ \{j_U(\kappa^+)\}$$ and $\kappa>\delta$ implies that $\mu^\delta<\kappa$ for all $\mu<\kappa$. 
 
  \end{subclaim*}
 
 \begin{proof}[Proof of the Subclaim]
  Let $\theta = |\lambda|$. Then our assumptions imply that $$\delta ~ < ~ \cof{\lambda} ~ \leq ~ \theta ~ < ~ \lambda ~ < ~ \theta^+.$$  
  Moreover, we have $\cof{\theta}\neq\delta$, because otherwise $\theta$ would be a singular strong limit cardinal of cofinality $\delta$  and  our assumptions would allow us to  repeat the argument from the first part of the proof of Theorem \ref{theorem:SuccSingularFresh} to show that $\theta^+=(\theta^+)^{\Ult{\VV}{U}}$, contradict our assumption that $\lambda$ is a cardinal in $\Ult{\VV}{U}$. 
  In addition, we know that there is some $\nu<\theta$ satisfying $\nu^\delta\geq\theta$, because otherwise Lemma \ref{lemma:UltrapowerEmbeddingFixedPoints} would imply that $j_U(\theta)=\theta<\lambda<\theta^+=j_U(\theta^+)=(j_U(\theta)^+)^{\Ult{\VV}{U}}$, which again contradicts the assumption that $\lambda$ is a cardinal in $\Ult{\VV}{U}$. 
  Let $\kappa<\theta$ be the minimal cardinal with the property that $\kappa^\delta\geq\theta$ holds. 
   Then the minimality of $\kappa$ implies that $\mu^\delta<\kappa$ holds for all $\mu<\kappa$. 
   
   First, assume that $\kappa=2$. Then $\delta<\theta\leq 2^\delta=\delta^+$ and therefore $\theta = \delta^+$. 
   Since Lemma \ref{lemma:UltrapowerEmbeddingFixedPoints} implies that $j_U(\delta^{++})=\delta^{++}$, we know that $j_U(\delta^{++})=\delta^{++}=\theta^+>\lambda$ and, as above, we can conclude that   $\lambda$ is not contained in the interval $(j_U(\delta^+),\delta^{++})$.  
    Moreover, since our assumptions on $\lambda$ directly imply that $\lambda$ is not contained in the interval $(j_U(\delta),j_U(\delta^+))$, we can conclude that $\lambda$ is an element of the set $(\delta^+,j_U(\delta)]\cup\{j_U(\delta^+)\}$ in this case.

  Next, assume that $\kappa>2^\delta$. 
  Then our cardinal arithmetic assumptions and the minimality of $\kappa$ imply that $\cof{\kappa}\leq\delta$ and $\theta=\kappa^+$. 
 But then we already know that $\cof{\kappa}=\delta$, because otherwise we could apply Lemma \ref{lemma:UltrapowerEmbeddingFixedPoints}  to conclude that $j_U(\theta) = \theta < \lambda < \theta^+  =  j_U(\theta^+)$.
 Since our assumptions imply that $(\kappa^+)^\delta=\kappa^+$, Lemma \ref{lemma:UltrapowerEmbeddingFixedPoints} implies that $j_U(\kappa^{++})=\kappa^{++}=\theta^+$ and this shows that $\lambda$ is not contained in the interval $(j_U(\kappa^+),\kappa^{++})$.  
 %
   Since $\lambda$ is also not contained in the interval $(j_U(\kappa),j_U(\kappa^+))$, we can conclude that $\lambda$ is contained in the set $(\kappa^+,j_U(\kappa)]\cup\{j_U(\kappa^+)\}$. 
 \end{proof}

  First, assume that $\kappa=\delta$ holds. By our assumptions, Lemma \ref{lemma:UltrapowerEmbeddingFixedPoints} shows that $\delta^{++}=j_U(\delta^{++})>j_U(\delta^+)$. 
  Since we know that $\delta^+<\lambda\leq j_U(\delta^+)$ and $\cof{\lambda} > \delta$, this implies that $\cof{\lambda}=\delta^+$,  and hence we can use  Proposition \ref{proposition:MinFresh} to find an unbounded subset of $\lambda$ that is fresh over $\Ult{\VV}{U}$.    %

  Next, assume that $\kappa>\delta$ and $\lambda=j_U(\kappa)$. 
  Then $$\delta ~ < ~ \cof{\lambda} ~ \leq ~ \cof{\lambda}^{\Ult{\VV}{U}} ~ = ~ j_U(\cof{\kappa}) ~ = ~ j_U(\delta) ~ < ~ \delta^{++}$$ and we can conclude that $\cof{\lambda}=\delta^+$.
  Another application of Proposition \ref{proposition:MinFresh} now yields the desired subset of $\lambda$.

  Now, assume that $\kappa>\delta$ and $\lambda=j_U(\kappa^+)$.   
  Then our assumptions ensure the existence  a $\square(\kappa^+)$-sequence and therefore we can apply  Theorem \ref{theorem:IndSquareFresh} to find an unbounded subset of $\lambda$ that is fresh over $\Ult{\VV}{U}$.

  Finally, we assume that $\kappa>\delta$ and $\kappa^+<\lambda<j_U(\kappa)$. Then we know that $\mu^\delta<\kappa$ holds for all $\mu<\kappa$.

  \begin{subclaim*}
   $\cof{\lambda}=\kappa^+$. 
  \end{subclaim*}
  
  \begin{proof}[Proof of the Subclaim]
    Assume, towards a contradiction, that $\cof{\lambda}\neq\kappa^+$. 
    Since $\kappa$ is singular and $\cof{\lambda}<\lambda<j_U(\kappa)<\kappa^{++}$,  this implies that $\cof{\lambda}<\kappa$. 
     In this situation, we can repeat an argument from the first part of the proof of Theorem \ref{theorem:SuccSingularFresh} to find a  monotone enumeration $\seq{\kappa_\xi}{\xi<\delta}$ of a closed unbounded subset of $\kappa$ of order-type $\delta$ such that $\kappa_0>\cof{\lambda}$, $[\xi\mapsto\kappa_\xi]_U=\kappa$ and $[\xi\mapsto\kappa_\xi^+]_U=\kappa^+$. 
  Fix a function $f$ with domain $\delta$ such that $[f]_U=\lambda$ holds and $f(\xi)$ is a regular cardinal in the interval $(\kappa_\xi^+,\kappa)$ for all $\xi<\delta$.     
   Pick a sequence $\seq{f_\alpha}{\alpha<\cof{\lambda}}$ of functions with domain $\delta$ such that $f_\alpha(\xi)<f(\xi)$ holds for all $\alpha<\cof{\lambda}$ and all $\xi<\delta$, and the induced sequence $\seq{[f_\alpha]_U}{\alpha<\cof{\lambda}}$  is strictly increasing and cofinal in $\lambda$. 
   Given $\xi<\delta$, the fact that $f(\xi)$ is a regular cardinal greater than $\cof{\lambda}$ then yields an ordinal $\gamma_\xi<f(\xi)$ with $f_\alpha(\xi)<\gamma_\xi$ for all $\alpha<\cof{\lambda }$.  
 But then $[f_\alpha]_U<[\xi\mapsto\gamma_\xi]_U<\lambda$ for all $\alpha<\cof{\lambda}$, a contradiction. 
 \end{proof}

 By the above computations, we now know that $\kappa$ is a singular cardinal of cofinality $\delta$ with the property that $\mu^\delta<\kappa$ holds for all $\mu<\kappa$, and $\lambda$ is a limit ordinal of cofinality $\kappa^+$ with $\kappa^+<\lambda<j_U(\kappa)$ that is a regular cardinal in $\Ult{\VV}{U}$. 
 Since our assumptions guarantee  the existence of a $\square_\kappa$-sequence, we can use Theorem \ref{theorem:FreshLimitOrdinalsNotCardinals} to show that there also exists an unbounded subset of $\lambda$ that is fresh over $\Ult{\VV}{U}$ in this case. 
 \end{proof}

 To conclude the proof of the theorem, fix a limit ordinal $\lambda$ with the property that the cardinal $\cof{\lambda}$ is greater than $\delta$ and not weakly compact. 
 Set $\lambda_0=\cof{\lambda}^{\Ult{\VV}{U}}$. By {\cite[Lemma 3.7.(ii)]{MR1940513}}, we then have $\cof{\lambda_0}=\cof{\lambda}$. Hence, we can use the previous claim to find an unbounded subset of $\lambda_0$ that is fresh over $\Ult{\VV}{U}$. Using Proposition \ref{proposition:CofUltraFresh}, we can conclude that there is an unbounded subset of $\lambda$ that is fresh over $\Ult{\VV}{U}$. 
\end{proof}

We end this section by using famous results of Schimmerling and Zeman to show that, in canonical inner models, the assumptions of Theorem \ref{theorem:CanonicalModelsCharFresh} are satisfied for all measurable cardinals.

\begin{proof}[Proof of Theorem \ref{theorem:InnerModels}]
  We argue that Jensen-style extender models without subcompact cardinals satisfy the statements (a), (b) and (c) listed in Theorem \ref{theorem:CanonicalModelsCharFresh}. 
   First, notice that the $\GCH$ holds in all of these models and hence statement  (a) is satisfied.  
  Next, recall that {\cite[Theorem 15]{MR2081183}}\footnote{Schimmerling's and Zeman's notion of \emph{Jensen core model} in \cite{MR2081183} agrees with our notion of Jensen-style extender model.} shows that, in Jensen-style extender models, a $\square_\nu$-sequence exists if and only if $\nu$ is not a subcompact cardinal. 
  In particular, we know that,  in Jensen-style extender models without subcompact cardinals, $\square(\nu^+)$-sequences exist for all infinite cardinals $\nu$. 
  Since {\cite[Theorem 0.1]{MR2563821}} yields the existence of $\square(\kappa)$-sequences for inaccessible cardinals $\kappa$ in the relevant models, we can conclude that statement  (b) holds in these models. 
 Finally, the validity of statement (c) in Jensen-style extender models without subcompact cardinals again follows from {\cite[Theorem 15]{MR2081183}}. 
\end{proof}


\section{Consistency strength}

We end this paper by establishing the equiconsistency stated in Theorem \ref{theorem:ConsStrength}.  We start by showing that the existence of a weakly compact cardinal above a measurable cardinal is a lower bound for the consistency of the corresponding statement. 

\begin{theorem}
 Assume that there is no inner model with a weakly compact cardinal above a measurable cardinal. 
  If $U$ is a normal ultrafilter on a measurable cardinal $\delta$, then there is an unbounded subset of $\delta^{++}$ that is fresh over $\Ult{\VV}{U}$. 
\end{theorem}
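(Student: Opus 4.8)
The plan is to reduce the statement to an application of Theorem \ref{theorem:IndSquareFresh} at $\kappa = \delta^{++}$, which requires two ingredients: that $j_U(\delta^{++}) = \delta^{++}$ and that a $\square(\delta^{++})$-sequence exists in $\VV$. Note first that $\delta^{++}$ is a successor cardinal and hence is not weakly compact, so Proposition \ref{proposition:MinNonFresh} does not obstruct the existence of a fresh subset; the whole content is to produce one. Granting the two ingredients, Theorem \ref{theorem:IndSquareFresh} applied to the regular cardinal $\kappa = \delta^{++} > \delta$ yields a closed unbounded, hence unbounded, subset of $j_U(\delta^{++}) = \delta^{++}$ that is fresh over $\Ult{\VV}{U}$, which is exactly the desired conclusion.

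The first step is to observe that the anti-large-cardinal hypothesis forces $2^\delta = \delta^+$. Indeed, if $2^\delta > \delta^+$, then by the results of Gitik and Mitchell the failure of $\GCH$ at the measurable cardinal $\delta$ yields an inner model $M$ containing a measurable cardinal $\kappa$ of Mitchell order at least $1$ (in fact equal to its double successor). Such a $\kappa$ carries, in $M$, a normal measure concentrating on the measurable cardinals below $\kappa$; in particular $M$ has two measurable cardinals $\gamma_0 < \gamma_1 < \kappa$. Since $\gamma_1$ is in particular weakly compact, $M$ is an inner model with a weakly compact cardinal above a measurable cardinal, contradicting our assumption. Hence $2^\delta = \delta^+$. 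Using this, $\lambda^\delta \leq (\delta^+)^\delta = \delta^+ < \delta^{++}$ for all $\lambda < \delta^{++}$, so Lemma \ref{lemma:UltrapowerEmbeddingFixedPoints} applied with $\nu = \delta^{++}$ (which has cofinality $\delta^{++} \neq \delta$) gives $j_U(\delta^{++}) = \delta^{++}$.

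The second, and main, step is to produce a $\square(\delta^{++})$-sequence. Here I would pass to the core model $K$, which exists under our hypothesis since we are far below the large cardinals needed for its construction. Two standard facts about $K$ are used. First, $\delta$ remains measurable in $K$: the embedding $j_U$ restricts to an elementary embedding of $K$ into $j_U(K) = K^{\Ult{\VV}{U}} = K$ with critical point $\delta$, and this witnesses measurability of $\delta$ in $K$. Second, by the core model theory of square sequences, every $\VV$-regular cardinal that is not weakly compact in $K$ carries a $\square$-sequence in $\VV$, since the canonical coherent sequence read off from the $K$-hierarchy cannot be threaded in $\VV$ unless the cardinal is weakly compact in $K$. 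Now $\delta^{++}$ cannot be weakly compact in $K$: otherwise, since $\delta < \delta^{++}$ is measurable in $K$, the inner model $K$ would itself contain a weakly compact cardinal above a measurable cardinal, again contradicting our assumption. Therefore $\delta^{++}$ is not weakly compact in $K$, and the cited core model result produces a $\square(\delta^{++})$-sequence in $\VV$.

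The main obstacle is this second step: it rests on the core model machinery, namely the existence and basic correctness properties of $K$ under the hypothesis, the persistence of the measurability of $\delta$ to $K$, and the theorem that a regular cardinal not weakly compact in $K$ carries a $\square$-sequence in $\VV$. One must also be slightly careful that $\delta^{++}$ is computed compatibly in $\VV$ and $K$; this is handled by weak covering, which gives $(\delta^{++})^K = \delta^{++}$, so that the square-existence theorem applies to $\delta^{++}$ as a cardinal of $K$. By contrast, the first step and the final application of Theorem \ref{theorem:IndSquareFresh} are routine once $2^\delta = \delta^+$ is in hand.
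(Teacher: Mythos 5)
Your proposal has exactly the skeleton of the paper's proof: first derive $2^\delta=\delta^+$ from the hypothesis via Gitik--Mitchell (an inner model with $o(\kappa)=\kappa^{++}$ contains a measurable below a measurable, hence a weakly compact above a measurable), then conclude $j_U(\delta^{++})=\delta^{++}$ from Lemma \ref{lemma:UltrapowerEmbeddingFixedPoints}, then produce a $\square(\delta^{++})$-sequence in $\VV$ from an inner model in which $\delta$ is measurable and $\delta^{++}$ is not weakly compact, and finally apply Theorem \ref{theorem:IndSquareFresh}. The one real divergence is your choice of inner model: you pass to the core model $\KK$, whereas the paper uses $\LLOf{U}$. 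This choice matters, because $\LLOf{U}$ trivializes precisely the two points on which your argument leans hardest: $\delta$ is measurable in $\LLOf{U}$ for classical reasons (no core-model absorption needed), and $\delta^{++}$ is not weakly compact there directly by the hypothesis, since $\LLOf{U}$ would otherwise itself be an inner model with a weakly compact above a measurable. The paper then runs Jensen's fine-structural construction relativized to $\LLOf{U}$ to get a tail of a $\square(\delta^{++})$-sequence, invokes the Todor\v{c}evi\'{c}/Jensen observation that this sequence cannot be threaded in $\VV$ (the same ``threading implies weak compactness in the inner model'' phenomenon you cite for $\KK$), and extends the tail to a full $\square(\delta^{++})$-sequence.

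Two of your $\KK$-specific justifications are flawed as written. First, the existence of an elementary embedding $j_U\restriction\KK:\KK\to\KK$ with critical point $\delta$ does not by itself witness that $\delta$ is measurable \emph{in} $\KK$: an external embedding only yields a derived measure $\Set{X\in\POT{\delta}^{\KK}}{\delta\in j_U(X)}$ living in $\VV$, and one needs a genuine core-model maximality/absorption theorem to see that this measure (or some measure on $\delta$) appears on $\KK$'s sequence. That theorem is true in Mitchell's setting below $o(\kappa)=\kappa^{++}$, but it is heavy machinery, and your one-line argument is not a proof of it. Second, your appeal to weak covering to get $(\delta^{++})^{\KK}=\delta^{++}$ is incorrect: weak covering computes successors of \emph{singular} cardinals (and related cases), while $\delta^+$ and $\delta^{++}$ are regular in $\VV$, so nothing prevents $\delta^{++}$ from being, say, inaccessible in $\KK$ after collapsing. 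Fortunately this step is also unnecessary if you state the transfer theorem in the correct form (every $\VV$-regular $\kappa\geq\omega_2$ that is not weakly compact in $\KK$ carries a $\square(\kappa)$-sequence in $\VV$, with no requirement that $\kappa$ be a cardinal of $\KK$). So your route can be repaired with the right core-model citations, but the paper's substitution of $\LLOf{U}$ for $\KK$ is what makes the argument self-contained and avoids both issues.
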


\begin{proof}
 By our assumptions, we can use the results of  \cite{zbMATH04197980} to show that $2^\delta=\delta^+$ holds. 
  %
  %
  Set $\kappa=\delta^{++}$. 
  Then our assumptions imply that $\kappa$ is not weakly compact in $\LL[U]$. 
  In this situation, we can construct a tail of a $\square(\kappa)$-sequence $\seq{C_\nu}{\xi < \nu < \kappa, ~ \nu \in \Lim}$ in $\LL[U]$ above some ordinal $\xi > \delta^+$ with $\xi < \kappa$, using the argument in {\cite[Section 6]{JensenFine}} for $\LL$. 
   A consequence of this proof, published by Todor\v{c}evi\'{c} in {\cite[1.10]{TodorcevicPartCountableOrdinals}}, but probably first noticed by Jensen (see {\cite[Theorem 2.5]{SchimmerlingCoherentSeq}} for a modern account), is that the sequence $\seq{C_\nu}{\xi < \nu < \kappa, ~ \nu \in \Lim}$ remains a tail of a $\square(\kappa)$-sequence in $\VV$. We can now easily extend this sequence to a $\square(\kappa)$-sequence $\seq{C_\nu}{\nu \in \Lim \cap \kappa}$ in $\VV$. 
   Since $2^\delta=\delta^+$ holds,  Lemma \ref{lemma:UltrapowerEmbeddingFixedPoints} shows that $j_U(\kappa)=\kappa$ and hence we can use Theorem \ref{theorem:IndSquareFresh} to find an unbounded subset of $\kappa$ that is fresh over $\Ult{\VV}{U}$. 
\end{proof}

We now use forcing to show that the above large cardinal assumption is also an upper bound for the consistency strength of the non-existence of fresh subsets at the double successor of a measurable cardinal. 
 %
 The following lemma is a reformulation and slight strengthening of {\cite[Lemma 3.5]{sakainote}}.
 The notion of \emph{$\lambda$-strategically closed partial orders} and the corresponding game $G_\lambda(\PPP)$ are introduced in {\cite[Definition 5.15]{MR2768691}}.

\begin{lemma}\label{lemma:ClosedPreservesFreshness}
 Let $U$ be a normal ultrafilter on a measurable cardinal $\delta$, let $\lambda$ be a limit ordinal with $\cof{\lambda}>\delta$ and let $A$ be an unbounded subset of $\lambda$ that is fresh over $\Ult{\VV}{U}$. If $\PPP$ is a $(\delta+1)$-strategically closed partial order, then $$\mathbbm{1}_\PPP\Vdash\anf{\check{A}\notin\Ult{\VV}{\check{U}}}.$$
\end{lemma}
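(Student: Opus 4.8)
The plan is to argue by contradiction and to convert any representation of $A$ in the ultrapower of a generic extension into a representation in $\Ult{\VV}{U}$ itself, thereby contradicting the freshness of $A$. Throughout, note that when the displayed statement is evaluated in a generic extension $\VV[G]$, the symbol $\VV$ refers to that extension, so $\Ult{\VV}{\check U}$ denotes $\Ult{\VV[G]}{U}$. I would first record the consequences of the closure hypothesis: a $(\delta+1)$-strategically closed order is in particular ${\leq}\delta$-distributive, so $\PPP$ adds no new $\delta$-sequences of ordinals. Hence $\POT{\delta}$ is not enlarged, $U$ remains a normal ultrafilter on $\delta$ in $\VV[G]$, the critical point of the induced embedding $j_U^{\VV[G]}$ is still $\delta$, and $\Ult{\VV}{U}$ is an inner model of $\Ult{\VV[G]}{U}$. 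Moreover, since $\cof{\lambda}>\delta$, the embedding $j_U$ is continuous at $\lambda=\lub(A)$, a fact I will use to control the ordinals that appear in the construction.

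Assume now that the conclusion fails, so that some condition $p\leq\mathbbm{1}_\PPP$ forces $\check{A}\in\Ult{\VV}{\check U}$. I would fix a name $\dot{g}$ such that $p$ forces $\dot{g}$ to be a function from $\delta$ into $\POT{\mathrm{Ord}}$ with $[\dot{g}]_{\check U}=\check{A}$. The heart of the argument is then to feed $\dot{g}$ into a single run of the game $G_{\delta+1}(\PPP)$. Fixing a winning strategy $\sigma$ for the second player, I would construct a descending sequence $\seq{p_\xi}{\xi\leq\delta}$ of conditions below $p$ played according to $\sigma$, and simultaneously read off ground-model data from $\dot{g}$. At stage $\xi$, I use the freshness of $A$ to fix, inside $\VV$, a function $h_\xi$ with $[h_\xi]_U=A\cap\alpha_\xi$ for a suitable $\alpha_\xi<\lambda$, and I then extend the current condition so that it decides, on a $\check U$-large set of coordinates, the agreement of $\dot{g}$ with $\check{h}_\xi$. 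It is here that strategic closure---rather than mere distributivity---is essential: the move of the second player at the limit stage $\delta$ supplies a single lower bound $p_\delta$ below every $p_\xi$, allowing the $\delta$-many local decisions to be made along one coherent descending thread.

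From the sequence $\seq{h_\xi}{\xi<\delta}\in\VV$ I would assemble a single function $g^\ast\in\VV$ and argue that $p_\delta$ forces $[\check{g}^\ast]_{\check U}=[\dot{g}]_{\check U}=\check{A}$. Since $[g^\ast]_{\check U}$ is computed from a ground-model function, $p_\delta$ then forces $\check{A}\in\Ult{\VV}{U}$; as the latter statement concerns only ground-model objects and is therefore forcing-invariant, we would obtain $A\in\Ult{\VV}{U}$ in $\VV$, contradicting the assumption that $A$ is fresh over $\Ult{\VV}{U}$. This is the point at which the hypothesis $\cof{\lambda}>\delta$ does its real work: because the run has length $\delta$ and each $\alpha_\xi$ lies below $\lambda$, the supremum of the $\alpha_\xi$ stays below $\lambda$, so the cofinal structure of $A$ interacts coherently with the game, and the continuity of $j_U$ at $\lambda$ guarantees that stabilizing the agreements between $\dot{g}$ and the $h_\xi$ pins down $A$ exactly rather than some proper superset.

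The main obstacle is precisely this coordinate-wise stabilization. A priori $\dot{g}(\xi)$ may be a genuinely new subset of $\lambda$ that no single condition decides outright, and there are only $\delta$ stages available while $A$ possesses $\cof{\lambda}>\delta$ many initial segments; so one cannot hope to decide $\dot{g}$ itself. The resolution, and the reason both the closure of $\PPP$ and the freshness of $A$ are indispensable, is that one never needs $\dot{g}(\xi)$ in full but only its agreement, on a $\check U$-large set, with the ground-model representatives $h_\xi$ of proper initial segments of $A$, all of which live in $\Ult{\VV}{U}$ by freshness. Checking that these $\delta$-many agreements can be arranged to hold simultaneously on one $U$-measure-one set, and that the resulting $g^\ast$ indeed satisfies $[g^\ast]_U=A$, is the technical core of the proof; I would carry out this bookkeeping following the argument of {\cite[Lemma 3.5]{sakainote}}, of which the present statement is a reformulation.
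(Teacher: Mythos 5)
Your overall frame (argue by contradiction, convert a representation of $A$ in $\Ult{\VV[G]}{U}$ into one in $\Ult{\VV}{U}$, and use one run of $G_{\delta+1}(\PPP)$ to do it) matches the paper's, but the step that is supposed to produce the contradiction fails, and not for repairable bookkeeping reasons. You fix ordinals $\alpha_\xi<\lambda$ with representatives $h_\xi\in\VV$ of $A\cap\alpha_\xi$, have the run decide, for each $\xi<\delta$, a $U$-large set of coordinates on which $\dot{g}$ agrees with $h_\xi$ below a representative $e_\xi$ of $\alpha_\xi$, and then claim that an assembled $g^\ast\in\VV$ satisfies $[g^\ast]_U=A$. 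But by {\L}os' Theorem, whatever ground-model function you glue together from the $h_\xi$ along such agreement sets satisfies $[g^\ast]_U=[\dot{g}]_U\cap[s]_U=A\cap[s]_U$, where $s$ is (essentially) a diagonal supremum of the functions $e_\xi$; to conclude $[g^\ast]_U=A$ you would need $[s]_U\geq\lambda$, and nothing gives this. Indeed, for $\lambda=\delta^+$ it is provably impossible: since $\alpha_\xi<\delta^+=[\zeta\mapsto\zeta^+]_U$, one may assume $e_\xi(\zeta)<\zeta^+$ for all $\zeta<\delta$, and then any diagonal supremum also satisfies $s(\zeta)<\zeta^+$ (a supremum of $\betrag{\zeta}<\delta$ many ordinals below the regular cardinal $\zeta^+$), whence $[s]_U<[\zeta\mapsto\zeta^+]_U=\delta^+=\lambda$. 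So your $g^\ast$ represents a proper initial segment $A\cap\beta$ with $\beta<\lambda$, which freshness already places in $\Ult{\VV}{U}$ --- no contradiction. This is the heart of the matter, not a technicality that can be delegated to Sakai's Lemma 3.5 (which the present lemma strengthens): $U$-large agreement of $\dot{g}$ with ground-model representatives of proper initial segments is exactly what freshness plus {\L}os already predicts, so deciding those agreements can never pin down $A$ itself. (Your own justification of this step, via $\sup_\xi\alpha_\xi<\lambda$ and continuity of $j_U$ at $\lambda$, points the wrong way: that supremum staying below $\lambda$ is precisely the obstruction.)

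The idea your outline is missing is the paper's dichotomy on coordinates, in place of any attempted reconstruction of $A$. Since $\PPP$ adds no new subsets of $\delta$, a single condition $p_1\leq p$ decides the set $X=\Set{\xi<\delta}{\dot{f}(\xi)\in\VV}$. If $X\in U$, then distributivity makes $\dot{f}\restriction X$ itself a ground-model function, so $A=[\dot{f}^G]_U=[\dot{f}\restriction X]_U\in\Ult{\VV}{U}$, contradicting freshness. The substance of the proof is showing $X\in U$: assuming $\delta\setminus X\in U$, the paper fixes an elementary submodel $M\prec\HH{\theta}$ of size $\delta$ with ${}^{{<}\delta}M\subseteq M$ containing all relevant objects, sets $\eta=\sup(M\cap\lambda)<\lambda$, fixes \emph{one} function $h$ with $[h]_U=A\cap\eta$, and constructs a counter-strategy for Player {\sf Odd} so that the run stays inside $M$ and, at each coordinate $\xi\in\delta\setminus X$, Odd exploits the fact that $\dot{f}(\xi)$ is forced to be new --- so below every condition there is an ordinal $\gamma$ and extensions deciding $\anf{\check{\gamma}\in\dot{f}(\check{\xi})}$ both ways --- to force a disagreement between $\dot{f}(\xi)$ and $h(\xi)$. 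The lower bound $p_\delta$ supplied by Even's winning strategy forces all these disagreements simultaneously, contradicting (via {\L}os) that $[h]_U=A\cap\eta$ is an initial segment of $[\dot{f}^G]_U=A$, so that $h(\xi)$ must agree with $\dot{f}^G(\xi)$ on a $U$-large set. Your proposal contains neither the set $X$ (and the use of distributivity only on the old-valued coordinates) nor the splitting-point diagonalization against a single $h$; these, rather than any assembly of initial segments, are what make freshness and strategic closure do their work.
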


\begin{proof}
 Assume, towards a contradiction, that there is a condition $p$ in $\PPP$ and a $\PPP$-name $\dot{f}$ for a function with domain $\delta$ with the property that, whenever $G$ is $\PPP$-generic over $\VV$ with $p\in G$, then $[\dot{f}^G]_U=A$ holds in $\VV[G]$. 
 As $\PPP$ is $(\delta+1)$-strategically closed, there is a condition $p_1$ in $\PPP$ below $p$ and a subset $X$ of $\delta$ with the property that, whenever $G$ is $\PPP$-generic over $\VV$ with $p_1\in G$, then $X=\Set{\xi<\delta}{\dot{f}^G(\xi)\in\VV}$.

 \begin{claim*}
  If $\xi\in\delta\setminus X$ and $q\leq_\PPP p_1$, then there is $\gamma<\lambda$ and conditions $r_0$ and $r_1$ in $\PPP$ below $q$ such that $r_0\Vdash_\PPP\anf{\check{\gamma}\in\dot{f}(\check{\xi})}$ and $r_1\Vdash_\PPP\anf{\check{\gamma}\notin\dot{f}(\check{\xi})}$. 
 \end{claim*}
 
 \begin{proof}[Proof of the Claim]
  If such a pair of conditions does not exist, then it is easy to check that the condition $q$ forces $\dot{f}(\check{\xi})$ to be equal to the set $$\Set{\gamma<\lambda}{\exists r\leq_\PPP q ~ r\Vdash_\PPP\anf{\check{\gamma}\in\dot{f}(\check{\xi})}},$$ contradicting our assumption that $\xi$ is not an element of $X$. 
 \end{proof}

 \begin{claim*}
  $X\in U$. 
 \end{claim*}
 
 \begin{proof}[Proof of the Claim]
  Assume, towards a contradiction, that $X$ is not an element of $U$. 
 Fix a winning strategy $\sigma$ for Player {\sf Even} in the game $G_{\delta+1}(\PPP)$, some sufficiently large regular cardinal $\theta$ and an elementary submodel $M$ of $\HH{\theta}$ of cardinality $\delta$ satisfying $(\delta+1)\cup\{\lambda,\sigma,\dot{f},p_1,A,U,X,\PPP\}\subseteq M$ and ${}^{{<}\delta}M\subseteq M$. 
 We define $\eta=\sup(\lambda\cap M)<\lambda$ and fix a function $h$ with domain $\delta$ such that $[h]_U=A\cap\eta$. 
 
 Note that, given a partial run of $G_{\delta+1}(\PPP)$ of even length less than $\delta$ that consists of conditions in $M$ and was played according to $\sigma$ by Player {\sf Even}, the given sequence is an element of $M$ and Player {\sf Even} responds to it with a move in $M$. 
  %
 %
 Therefore, if $\tau$ is a strategy for Player {\sf Odd} in $G_{\delta+1}(\PPP)$ that answers to sequences of conditions in $M$ by playing a condition in $M$ and $\seq{p_\xi}{\xi\leq\delta}$ is a run of $G_{\delta+1}(\PPP)$ played according to $\sigma$ and $\tau$, then $p_\xi\in M$ for all $\xi<\delta$. 
 Moreover, the previous claim allows us to use elementarity to show for every $\xi\in\delta\setminus X$ and every condition $q\in M\cap\PPP$ with $q\leq_\PPP p_1$, there is $\gamma\in M\cap\lambda$ and a condition $r\in M\cap\PPP$ with $r\leq_\PPP p$ and 
  \begin{equation}\label{equation:equivalencesForcingSplit}
   \gamma\in h(\xi) ~ \Longleftrightarrow ~ r\Vdash_\PPP\anf{\check{\gamma}\notin\dot{f}(\check{\xi})} ~ \Longleftrightarrow ~ \neg(r\Vdash_\PPP\anf{\check{\gamma}\in\dot{f}(\check{\xi})}). 
  \end{equation}
 
  Now, pick  a strategy $\tau$ for Player {\sf Odd} in $G_{\delta+1}(\PPP)$ with the following properties:
   \begin{itemize}
    \item $\tau$ plays the condition $p_1$ in move $1$. 
    
    \item Given $\xi\in X$, if Player {\sf Even} played a condition $q\in M\cap\PPP$ in move $(2+2\cdot\xi)$, then $\tau$ responds by also playing the condition $q$ in the next move. 
    
    \item Given $\xi\in\delta\setminus X$, if Player {\sf Even} played a condition $q\in M\cap\PPP$ in move $(2+2\cdot\xi)$, then $\tau$ responds by playing a condition $r\in M\cap\PPP$ with $r\leq_\PPP q$ such that the equivalences of (\ref{equation:equivalencesForcingSplit}) hold true for some $\gamma\in M\cap\lambda$. 
  \end{itemize}
  
  Let $\seq{p_\xi}{\xi\leq\delta}$ be the run of $G_{\delta+1}(\PPP)$ played according to $\sigma$ and $\tau$. By the above remarks, we then have $p_\xi\in M$ for all $\xi<\delta$. In particular, for every $\xi\in\delta\setminus X$, there exists $\gamma_\xi<\lambda$ with $$\gamma_\xi\in h(\xi) ~ \Longleftrightarrow ~ p_\delta\Vdash_\PPP\anf{\check{\gamma}_\xi\notin\dot{f}(\check{\xi})} ~ \Longleftrightarrow ~ \neg(p_\delta\Vdash_\PPP\anf{\check{\gamma}_\xi\in\dot{f}(\check{\xi})}).$$ 
 
  Let $G$ be $\PPP$-generic over $\VV$ with $p_\delta\in G$. Then the closure properties of $\PPP$ imply that $[h]_U=A\cap\eta$ holds in $\VV[G]$. Since $A=[\dot{f}^G]_U$ holds in $\VV[G]$, we know that the set  $$Y ~ = ~ \Set{\xi<\delta}{\textit{$h(\xi)$ is an initial segment of $\dot{f}^G(\xi)$}}$$ is an element of $U$. But then there is some $\xi\in Y\setminus X = Y \cap (\delta \setminus X)$ and our construction ensures that the ordinal $\gamma_\xi$ is contained in the symmetric difference of $h(\xi)$ and $\dot{f}^G(\xi)$, a contradiction. 
 \end{proof}
 
 Now, let $G$ be $\PPP$-generic over $\VV$ with $p_1\in G$. By the previous claim and the closure properties of $\PPP$, we can find a function $f$ with domain $\delta$ in $\VV$ such that $[f]_U=[\dot{f}^G]_U=A$ holds in $\VV[G]$. Since forcing with $\PPP$ adds no new functions from $\delta$ to the ordinals, we can conclude that $[f]_U=A$ also holds in $\VV$, a contradiction as $A$ was chosen to be fresh over $\Ult{\VV}{U}$. 
\end{proof}

The previous lemma now allows us to prove the following results that can be used to complete the proof of Theorem \ref{theorem:ConsStrength} by considering the case $\mu=\delta^+$. 

\begin{theorem}\label{theorem:CollapseWCnefresh}
 Let $U$ be a normal ultrafilter on a measurable cardinal $\delta$, let $\mu>\delta$ be a regular cardinal, let $\WW$ be an inner model containing $U$ and let $\kappa>\mu$ be weakly compact in $\WW$. If $\VV$ is a $\Col{\mu}{{<}\kappa}^\WW$-generic extension of $\WW$, then no unbounded subset of $\kappa$ is fresh over $\Ult{\VV}{U}$. 
\end{theorem}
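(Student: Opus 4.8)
The plan is to assume, towards a contradiction, that $A\subseteq\kappa$ is unbounded in $\kappa$ and fresh over $\Ult{\VV}{U}$, and then to contradict Lemma \ref{lemma:ClosedPreservesFreshness} by reflecting $A$ through a weak compactness embedding of $\WW$ and evaluating the image in a tail forcing extension. First I would record the relevant features of $\PPP=\Col{\mu}{{<}\kappa}^\WW$. Since $\mu>\delta$ is regular, $\PPP$ is ${<}\mu$-closed and hence adds no new subsets of $\delta$; thus $U$ remains a normal ultrafilter in $\VV$ and $j_U$ is unchanged on the ordinals. Since $\kappa$ is weakly compact, hence inaccessible, in $\WW$, the forcing $\PPP$ is $\kappa$-c.c.\ of size $\kappa$ and makes $\kappa=\mu^+$, so that $\cof{\kappa}=\kappa>\delta$ in $\VV$. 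Fix a $\PPP$-name $\dot A\in\WW$ for $A$.

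Next I would use the weak compactness of $\kappa$ in $\WW$ to obtain a transitive $M\models\ZFC^-$ with $\{\PPP,\dot A,U,\kappa\}\subseteq M$, $\kappa\subseteq M$, $|M|^\WW=\kappa$, ${}^{<\kappa}M\cap\WW\subseteq M$, and with $M$ elementary in $H_\theta^\WW$ for suitably large $\theta$, together with an elementary embedding $\map{j}{M}{N}$ such that $\crit{j}=\kappa$ and ${}^{<\kappa}N\cap\WW\subseteq N$. Because $\crit{j}=\kappa$ and every condition in $\PPP$ has support of size ${<}\mu<\kappa$, the embedding $j$ fixes $\PPP$ pointwise, $j(U)=U$, and $j(\PPP)=\Col{\mu}{{<}j(\kappa)}^N\cong\PPP\times\QQQ$ where $\QQQ=\Col{\mu}{[\kappa,j(\kappa))}^N$. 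The crucial closure observation is that ${}^{<\mu}N\cap\VV\subseteq N$: any ${<}\mu$-sequence of elements of $N\subseteq\WW$ already lies in $\WW$ because $\PPP$ is ${<}\mu$-closed, and then lies in $N$ by the closure of $N$ in $\WW$. Consequently $\QQQ$ is genuinely ${<}\mu$-closed as computed in $\VV$, hence $(\delta+1)$-strategically closed in $\VV$.

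I would then let $H$ be $\QQQ$-generic over $\VV$. As $G$ is $\PPP$-generic over $N$ and $H$ is $\QQQ$-generic over $N[G]\subseteq\VV$, the product $G\times H$ is $j(\PPP)$-generic over $N$, and since $j[G]=G\subseteq G\times H$ the embedding lifts to an elementary $\map{\hat j}{M[G]}{N[G\times H]}$ with $\hat j(A)=j(\dot A)^{G\times H}$. Here one checks that, in $M[G]$, the set $A$ is unbounded in $\kappa$ and fresh over $\Ult{M[G]}{U}$: each $A\cap\beta$ with $\beta<\kappa$ lies in $\Ult{M[G]}{U}$ by elementarity, using $\kappa\subseteq M$ and $A\cap\beta=(\dot A\cap\beta)^G\in M[G]$, whereas $A\notin\Ult{M[G]}{U}$ because $\Ult{M[G]}{U}\subseteq\Ult{\VV}{U}$. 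Applying $\hat j$, the set $\hat j(A)$ is unbounded in $j(\kappa)$ and fresh over $\Ult{N[G\times H]}{U}$. Since $\hat j$ fixes ordinals below $\kappa$ we have $\hat j(A)\cap\kappa=A$, and as $\kappa<j(\kappa)=\lub(\hat j(A))$, freshness of $\hat j(A)$ yields $A=\hat j(A)\cap\kappa\in\Ult{N[G\times H]}{U}$. Because $A$ is a set of ordinals and $N[G\times H]\subseteq\VV[H]$ with the same $U\in N$, its ultrapower membership is absolute, so $A\in\Ult{\VV[H]}{U}$.

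Finally I would close the argument with Lemma \ref{lemma:ClosedPreservesFreshness}: since $\QQQ$ is $(\delta+1)$-strategically closed in $\VV$, the set $A$ is fresh over $\Ult{\VV}{U}$, and $\cof{\kappa}=\kappa>\delta$, the lemma gives $\mathbbm{1}_\QQQ\Vdash\check A\notin\Ult{\VV}{\check U}$, so that $A\notin\Ult{\VV[H]}{U}$, contradicting the conclusion of the previous paragraph. The step I expect to be the main obstacle is verifying that $\QQQ$ is $(\delta+1)$-strategically closed \emph{as computed in $\VV$} rather than only in $N$; this is exactly what the closure transfer ${}^{<\mu}N\cap\VV\subseteq N$ provides, and it is what simultaneously legitimizes evaluating $j(\dot A)$ at the tail generic $H$ and enables the preservation lemma to apply. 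A secondary point requiring care is choosing $M$ elementary enough in $H_\theta^\WW$ that unboundedness and freshness of $A$ genuinely reflect into $M[G]$.
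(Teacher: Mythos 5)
Your proposal follows the same architecture as the paper's proof: a weak compactness embedding $j$ with $\crit{j}=\kappa$ obtained in $\WW$ from a ${<}\kappa$-closed model containing the nice names, the factorization $j(\PPP)\cong\PPP\times\QQQ$ with $\QQQ=\Col{\mu}{[\kappa,j(\kappa))}$, forcing with the tail $\QQQ$ over $\VV$, lifting to $\map{j_*}{M[G]}{N[G\times H]}$, and closing the argument against Lemma \ref{lemma:ClosedPreservesFreshness}; your observation that ${}^{{<}\mu}N\cap\VV\subseteq N$ makes $\QQQ$ genuinely ${<}\mu$-closed, hence $(\delta+1)$-strategically closed, in $\VV$ is exactly the point the paper also needs. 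Where you diverge is the middle step. The paper never reflects the freshness statement itself into $M[G]$: it fixes, alongside $\dot A$, a name $\dot F$ with $[\dot F^G(\gamma)]_U=A\cap\gamma$ for all $\gamma<\kappa$, applies the lifted embedding to produce the single function $f=j_*(\dot F^G)(\kappa)$, and uses {\L}os pointwise together with $j_*(\dot F^G(\gamma))=\dot F^G(\gamma)$ to conclude that $A$ is an initial segment of $[f]_U$ in the tail extension. That device keeps everything at the level of representing functions, so the only absoluteness required is that $\QQQ$ adds no new functions from $\delta$ into the ordinals.

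Two steps of your variant need repair. First, the model $M$ you request does not exist: a transitive $M$ with $\betrag{M}^\WW=\kappa$ that is elementary in $\HH{\theta}^\WW$ for $\theta>\kappa^+$ is impossible, since $\kappa\in M$ gives $\kappa^+\in M$ by elementarity and then $\kappa^+\subseteq M$ by transitivity. The standard fix (and what the paper does, citing {\cite[Theorem 1.3]{MR1133077}}) is to take a $\kappa$-sized $X\prec\HH{\kappa^+}^\WW$ with ${}^{{<}\kappa}X\subseteq X$ and $(\kappa+1)\cup\{\dot A,U\}\subseteq X$, noting that a nice name $\dot A$ lies in $\HH{\kappa^+}^\WW$ and is fixed by the transitive collapse. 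Second, your two absoluteness assertions --- \anf{$\Ult{M[G]}{U}\subseteq\Ult{\VV}{U}$} and \anf{ultrapower membership is absolute} between $N[G\times H]$ and $\VV[H]$ --- are not automatic: for transitive models $P\subseteq Q$ with $U\in P$, the collapsed $\Ult{P}{U}$ only $\in$-embeds into the collapsed $\Ult{Q}{U}$, and the same function can have different ordinal rank $[h]_U$ in the two models, so the collapses need not literally be contained in one another. In your situation the claims do hold for the relevant sets, but proving them requires showing that all models in play compute the same functions from $\delta$ into $\HH{\kappa}^\WW$: one uses the ${<}\mu$-distributivity of $\PPP$ and $\QQQ$, the closure ${}^{{<}\kappa}M\cap\WW\subseteq M$ and ${}^{{<}\kappa}N\cap\WW\subseteq N$ (which yields $\HH{\kappa}^\WW\subseteq M\cap N$ and closure under the relevant $\delta$-sequences), and a nice-name argument via the $\kappa$-chain condition and the inaccessibility of $\kappa$ in $\WW$ to see that every function in $\VV$ representing $A\cap\beta$ already lies in $M[G]$; from this one gets agreement of the ranks $[h]_U$ for $h\in{}^{\delta}\kappa$ and hence of the collapsed values $[g]_U$ for $g$ into $\POT{\kappa}$, and also that $j_U(\kappa)=\kappa$ is computed the same way everywhere. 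This verification is precisely what the paper's name $\dot F$ is engineered to avoid. With these two repairs your argument goes through and is a legitimate, if more verification-heavy, alternative to the paper's route.
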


\begin{proof}
  Assume, towards a contradiction, that there is an unbounded subset $A$ of $\kappa$ that is fresh over $\Ult{\VV}{U}$. 
 Note that, in $\VV$, our assumptions imply that $\mu^\delta=\mu$ and hence Lemma \ref{lemma:UltrapowerEmbeddingFixedPoints} implies that $j_U(\kappa)=\kappa$. In particular, for every $\gamma<\kappa$, there is a function $f\in\HH{\kappa}$ with domain $\delta$ and $[f]_U=\gamma$. 
 By our assumptions, there exists $G$ $\Col{\mu}{{<}\kappa}^\WW$-generic over $\WW$ with $\VV=\WW[G]$ and hence we know that ${}^{{<}\mu}\WW\subseteq\WW$. 
  Moreover, since $\Col{\mu}{{<}\kappa}$ satisfies the $\kappa$-chain condition in $\WW$, there exist $\Col{\mu}{{<}\kappa}$-nice names $\dot{A}$ and $\dot{F}$ in $\WW$ such that $\dot{A}^G=A$ and $\dot{F}^G$ is a function with domain $\kappa$ and the property that for all $\gamma<\kappa$, the set $\map{\dot{F}^G(\gamma)}{\delta}{\HH{\kappa}\cap\POT{\kappa}}$ is a function with $[\dot{F}^G(\gamma)]_U=A\cap\gamma$. 
  
  Work in $\WW$ and pick an elementary submodel $M$ of $\HH{\kappa^+}$ of cardinality $\kappa$ such that ${}^{{<}\kappa}M\subseteq M$ and $(\kappa+1)\cup\{\dot{A},\dot{F},U\}\subseteq M$. In this situation, the weak compactness of $\kappa$ yields a transitive set $N$ 
  with ${}^{{<}\kappa}N\subseteq N$ and an elementary embedding $\map{j}{M}{N}$ with critical point $\kappa$ (see {\cite[Theorem 1.3]{MR1133077}}). 
   %
  
  Now, let $H_0$ be $\Col{\mu}{[\kappa,j(\kappa))}$-generic over $\VV$. 
  Then there is $H\in\VV[H_0]$ that is $\Col{\mu}{{<}j(\kappa)}$-generic over $\WW$ with $\VV[H_0]=\WW[H]$ and $G\subseteq H$. In this situation, standard arguments (see {\cite[Proposition 9.1]{MR2768691}}) allow us to find an elementary embedding $\map{j_*}{M[G]}{N[H]}$ with $j_*\restriction M=j$. Set $f=j_*(\dot{F}^G)(\kappa)$.
  For any $\gamma < \gamma^\prime < \kappa$, $$\Set{\xi<\delta}{\textit{$\dot{F}^G(\gamma)(\xi)$ is an initial segment of $\dot{F}^G(\gamma^\prime)(\xi)$}}\in U.$$
   So, given $\gamma<\kappa$, elementarity implies that the set $$\Set{\xi<\delta}{\textit{$\dot{F}^G(\gamma)(\xi)$ is an initial segment of $f(\xi)$}}$$ is an element of $U$ since $j_*(\dot{F}^G(\gamma))=\dot{F}^G(\gamma)$. 
   But this implies that $A$ is an initial segment of $[f]_U$ in $\VV[H_0]$ and hence $A$ is not fresh over $\Ult{\VV}{U}$ in $\VV[H_0]$, contradicting Lemma \ref{lemma:ClosedPreservesFreshness}. 
\end{proof}

%
%
  %


\section{Open Questions}

 We end this paper by stating two questions raised by the above results. 

 Our first question is motivated by the fact that, in contrast to the proof of Theorem \ref{theorem:FreshLimitOrdinalsNotCardinals}, our proof of Theorem \ref{theorem:SuccSingularFresh} heavily makes use of the assumption that the $\GCH$ holds at the given singular cardinal. 
 Therefore, it is not possible to use Theorem \ref{theorem:SuccSingularFresh} to derive additional consistency strength from the existence of a normal ultrafilter $U$ on a measurable cardinal $\delta$ and a singular cardinal $\kappa$ of cofinality $\delta$ with the property that no unbounded subset of $\kappa^+$ is fresh over $\Ult{\VV}{U}$, because the existence of a cardinal $\delta<\mu<\kappa$ with $2^\mu>\kappa^+$ might prevent us from applying Theorem \ref{theorem:SuccSingularFresh}, and this constellation can be realized by forcing over a model containing a measurable cardinal. 
  In contrast, if it were possible to remove the $\GCH$ assumption from Theorem \ref{theorem:SuccSingularFresh}, then this would show that the above hypothesis implies that at least one of the following statements holds true: 
  \begin{itemize}
   \item The $\GCH$ fails at a measurable cardinal. 
   
   \item The $\SCH$ fails. 
   
   \item There exists a countably closed singular cardinal $\kappa$ with the property that there are no $\square_\kappa$-sequences. 
  \end{itemize}
  Note that a combination of the main result of \cite{zbMATH04197980}, {\cite[Theorem 1.4]{zbMATH00429348}} and {\cite[Corollary 6]{10.2307/2687750}} shows that the disjunction of the above statements implies the existence of a  measurable cardinal $\kappa$ with $o(\kappa)=\kappa^{++}$ in an inner model. 
  These considerations motivate the following question: 
   
  %

\begin{question}
  Let $U$ be a normal ultrafilter on a measurable cardinal $\delta$ and let $\kappa$ be a singular  cardinal of cofinality $\delta$ such that $\lambda^\delta<\kappa$ holds for all $\lambda<\kappa$. 
  Assume that there exists a $\square_\kappa$-sequence. Is there an unbounded subset of $\kappa^+$ that is fresh over $\Ult{\VV}{U}$? 
\end{question}

 Our second question addresses the fact that, in the models of set theory studied in Theorems \ref{theorem:sakai} and \ref{theorem:InnerModels}, the existence of fresh subsets only depends on the corresponding measurable cardinal and the cofinality of the given limit ordinal, but not on the specific normal ultrafilter used in the construction of the ultrapower. 
 Therefore, it is natural to ask whether this is always the case.

\begin{question}
 Is it consistent there there exist normal ultrafilters $U_0$ and $U_1$ on a measurable cardinal $\delta$ such that there is a limit ordinal $\lambda$ with the property that no unbounded subset of $\lambda$ is fresh over $\Ult{\VV}{U_0}$ and there exists an unbounded subset of $\lambda$ that is fresh over $\Ult{\VV}{U_1}$? 
\end{question}


\bibliographystyle{amsplain}
\bibliography{references}

\providecommand{\bysame}{\leavevmode\hbox to3em{\hrulefill}\thinspace}
\providecommand{\MR}{\relax\ifhmode\unskip\space\fi MR }
\providecommand{\MRhref}[2]{%
  \href{http://www.ams.org/mathscinet-getitem?mr=#1}{#2}
}
\providecommand{\href}[2]{#2}
\begin{thebibliography}{10}

\bibitem{MR1217188}
James Cummings, \emph{Strong ultrapowers and long core models}, The Journal of
  Symbolic Logic \textbf{58} (1993), no.~1, 240--248.

\bibitem{zbMATH05011391}
\bysame, \emph{{Notes on singular cardinal combinatorics}}, {Notre Dame Journal
  of Formal Logic} \textbf{46} (2005), no.~3, 251--282.

\bibitem{MR2768691}
\bysame, \emph{Iterated forcing and elementary embeddings}, Handbook of set
  theory. {V}ols. 1, 2, 3, Springer, Dordrecht, 2010, pp.~775--883.

\bibitem{MR1838355}
James Cummings, Matthew Foreman, and Menachem Magidor, \emph{Squares, scales
  and stationary reflection}, Journal of Mathematical Logic \textbf{1} (2001),
  no.~1, 35--98.

\bibitem{MR1942302}
James Cummings and Ernest Schimmerling, \emph{Indexed squares}, Israel Journal
  of Mathematics \textbf{131} (2002), 61--99.

\bibitem{zbMATH04197980}
Moti Gitik, \emph{{The strength of the failure of the singular cardinal
  hypothesis}}, {Annals of Pure and Applied Logic} \textbf{51} (1991), no.~3,
  215--240.

\bibitem{zbMATH00429348}
\bysame, \emph{{On measurable cardinals violating the continuum hypothesis}},
  {Annals of Pure and Applied Logic} \textbf{63} (1993), no.~3, 227--240.

\bibitem{zbMATH01158349}
Joel~David Hamkins, \emph{{Small forcing makes any cardinal
  superdestructible}}, {The Journal of Symbolic Logic} \textbf{63} (1998),
  no.~1, 51--58.

\bibitem{MR1133077}
Kai Hauser, \emph{Indescribable cardinals and elementary embeddings}, The
  Journal of Symbolic Logic \textbf{56} (1991), no.~2, 439--457.

\bibitem{MR1940513}
Thomas Jech, \emph{Set theory}, Springer Monographs in Mathematics,
  Springer-Verlag, Berlin, 2003, The third millennium edition, revised and
  expanded.

\bibitem{JensenFine}
R.~Bj{\"o}rn Jensen, \emph{The fine structure of the constructible hierarchy},
  Annals of Mathematical Logic \textbf{4} (1972), 229--308; erratum, ibid. 4
  (1972), 443, With a section by Jack Silver.

\bibitem{Je97}
\bysame, \emph{A new fine structure for higher core models}, Circulated
  manuscript, 1997.

\bibitem{zbMATH05532622}
R.~Bj{\"o}rn Jensen, Ernest Schimmerling, Ralf Schindler, and John~R. Steel,
  \emph{{Stacking mice}}, {The Journal of Symbolic Logic} \textbf{74} (2009),
  no.~1, 315--335.

\bibitem{MR1994835}
Akihiro Kanamori, \emph{The higher infinite}, second ed., Springer Monographs
  in Mathematics, Springer-Verlag, Berlin, 2003, Large cardinals in set theory
  from their beginnings.

\bibitem{zbMATH06136601}
Kyriakos Kypriotakis and Martin Zeman, \emph{{A characterization of
  \(\square(\kappa^{+})\) in extender models}}, {Archive for Mathematical
  Logic} \textbf{52} (2013), no.~1-2, 67--90.

\bibitem{MR3694332}
Chris Lambie-Hanson, \emph{Squares and narrow systems}, The Journal of Symbolic
  Logic \textbf{82} (2017), no.~3, 834--859.

\bibitem{MR3893286}
Chris Lambie-Hanson and Philipp L{\"u}cke, \emph{Squares, ascent paths, and
  chain conditions}, The Journal of Symbolic Logic \textbf{83} (2018), no.~4,
  1512--1538.

\bibitem{MR3845129}
Philipp L{\"u}cke and Philipp Schlicht, \emph{Measurable cardinals and good
  {$\Sigma_1(\kappa)$}-wellorderings}, Mathematical Logic Quarterly \textbf{64}
  (2018), no.~3, 207--217.

\bibitem{zbMATH00034081}
William~J. Mitchell, \emph{{On the singular cardinal hypothesis}},
  {Transactions of the American Mathematical Society} \textbf{329} (1992),
  no.~2, 507--530.

\bibitem{MS94}
William~J. Mitchell and John~R. Steel, \emph{Fine structure and iteration
  trees}, Lecture Notes in Logic, vol.~3, Springer-Verlag, Berlin, 1994.

\bibitem{sakainote}
Hiroshi Sakai, \emph{Note on covering and approximation properties}, RIMS
  Kokyuroku No. 1949, 2015, pp.~81--88.

\bibitem{doi:10.1142/S0219061314500032}
Grigor Sargsyan, \emph{Nontame mouse from the failure of square at a singular
  strong limit cardinal}, Journal of Mathematical Logic \textbf{14} (2014),
  no.~1, 47 p.

\bibitem{SchimmerlingCoherentSeq}
Ernest Schimmerling, \emph{Coherent sequences and threads}, Advances in
  Mathematics \textbf{216} (2007), no.~1, 89--117.

\bibitem{10.2307/2687750}
Ernest Schimmerling and Martin Zeman, \emph{Square in core models}, The
  Bulletin of Symbolic Logic \textbf{7} (2001), no.~3, 305--314.

\bibitem{MR2081183}
\bysame, \emph{Characterization of {$\square_\kappa$} in core models}, Journal
  of Mathematical Logic \textbf{4} (2004), no.~1, 1--72.

\bibitem{MR3523658}
Assaf Shani, \emph{Fresh subsets of ultrapowers}, Archive for Mathematical
  Logic \textbf{55} (2016), no.~5-6, 835--845.

\bibitem{zbMATH05051863}
John~R. Steel, \emph{{PFA implies \(\text{AD}^{L(\mathbb{R})}\)}}, {The Journal
  of Symbolic Logic} \textbf{70} (2005), no.~4, 1255--1296.

\bibitem{TodorcevicPartCountableOrdinals}
Stevo Todor\v{c}evi\'{c}, \emph{Partitioning pairs of countable ordinals}, Acta
  Mathematica \textbf{159} (1987), no.~3-4, 261--294.

\bibitem{MR2563821}
Martin Zeman, \emph{More fine structural global square sequences}, Archive for
  Mathematical Logic \textbf{48} (2009), no.~8, 825--835.

\end{thebibliography}

\end{document}